\newtheorem{theorem}{Theorem}
\newtheorem{lemma}{Lemma}
\newtheorem{prop}{Proposition}
\newtheorem{remark}{Remark}
\theoremstyle{definition}
\def\re{\mathbb{R}}
\def\N{\mathbb{N}}
\def\({\left(}
\def\){\right)}
\def\[{\left[}
\def\]{\right]}
\def\pd{\partial}
\def\ep{\varepsilon}
\def\w{\omega}
\def\la{\lambda}
\def\intO{\int_{\Omega}}
\newtheorem{ThmA}{Theorem A}
\begin{document}

\begin{frontmatter}

%% Title, authors and addresses

%% use the tnoteref command within \title for footnotes;
%% use the tnotetext command for the associated footnote;
%% use the fnref command within \author or \address for footnotes;
%% use the fntext command for the associated footnote;
%% use the corref command within \author for corresponding author footnotes;
%% use the cortext command for the associated footnote;
%% use the ead command for the email address,
%% and the form \ead[url] for the home page:
%%
%% \title{Title\tnoteref{label1}}
%% \tnotetext[label1]{}
%% \author{Name\corref{cor1}\fnref{label2}}
%% \ead{email address}
%% \ead[url]{home page}
%% \fntext[label2]{}
%% \cortext[cor1]{}
%% \address{Address\fnref{label3}}
%% \fntext[label3]{}

\title{Extremal functions of generalized critical Hardy inequalities}

%% use optional labels to link authors explicitly to addresses:
%% \author[label1,label2]{<author name>}
%% \address[label1]{<address>}
%% \address[label2]{<address>}

\author[M]{Megumi Sano}%\corref{Sano}\fnref{label1}}
\ead{megumisano0609@gmail.com}
%\fntext[label1]{Corresponding author.}
\address[M]{Department of Mathematics, Tokyo Institute of Technology, O-okayama, Meguro-ku, Tokyo 152-8551, Japan}
%\address[MS]{Department of Mathematics, Graduate School of Science, Osaka City University, Sumiyoshi-ku, Osaka, 558-8585, Japan}

\begin{keyword}
%% keywords here, in the form: keyword \sep keyword
Critical Hardy inequality \sep Optimal constant \sep Extremal function \sep Symmetry breaking

%% MSC codes here, in the form: \MSC code \sep code
%% or \MSC[2008] code \sep code (2000 is the default)
\MSC[2010] 35A23 \sep 35J20 \sep 46B50 

%\subjclass[2010]{Primary 35J20, 46B50; Secondary 46E30.}
%35J20: Variational methods for second-order elliptic equations
%46B50: Compactness in Banach (or normed) spaces
%46E30: Spaces of measurable functions ($L^p$-spaces, Orlicz spaces, Köthe function spaces, Lorentz spaces, rearrangement invariant spaces, ideal spaces, etc.)
\end{keyword}

\date{\today}

\begin{abstract}
In this paper, we show the existence and non-existence of minimizers of the following minimization problems which include an open problem mentioned by Horiuchi and Kumlin \cite{HK}:
\begin{align*}
G_a := \inf_{u \in W_0^{1,N}(\Omega ) \setminus \{ 0\} } \dfrac{\int_{\Omega} |\nabla u |^{N} \,dx}{\( \int_{\Omega} |u|^{q} f_{a,\beta}(x) dx \)^{\frac{N}{q}}}, \,\text{where} \,\,f_{a, \beta}(x):=|x|^{-N}\( \log \frac{aR}{|x|} \)^{-\beta}.
\end{align*}
First, we give an answer to the open problem when $\Omega =B_R(0)$.
Next, we investigate the minimization problems on general bounded domains. 
In this case, the results depend on the shape of the domain $\Omega$. 
Finally, symmetry breaking property of the minimizers is proved for sufficiently large $\beta$.
\end{abstract}

\end{frontmatter}

%\maketitle

%%
%% Start line numbering here if you want
%%
% \linenumbers

%%%%%%%%%%%%%%%%%%%%%%%%%%%%%%%%%%%%%%%%%%%%%%%%%%%%%%%%%%%%%%%%%%%%%%%%%%%%%%%%%%%%%%%%%%%%%%%%%%%%%%%%%%%%%%%%%%%
%
% \S1 Introduction.
%
%%%%%%%%%%%%%%%%%%%%%%%%%%%%%%%%%%%%%%%%%%%%%%%%%%%%%%%%%%%%%%%%%%%%%%%%%%%%%%%%%%%%%%%%%%%%%%%%%%%%%%%%%%%%%%%%%%%
\section{Introduction}

Let $N \ge 2$, $\Omega$ be a bounded domain in $\re^N$, $0 \in \Omega$, and $1<p<N$. The classical Hardy inequality holds for all $u \in W^{1,p}_0(\Omega)$ as follows:
\begin{equation}
\label{H_p}
\( \frac{N-p}{p} \)^p \intO \frac{|u|^p}{|x|^p} dx \le \intO | \nabla u |^p dx,
\end{equation}
where $W_0^{1,p}(\Omega)$ is a completion of $C_c^{\infty}(\Omega)$ with respect to the norm $\| \nabla (\cdot )\|_{L^p(\Omega)}$. We refer the celebrated work by G. H. Hardy \cite{Hardy}.
%For physical background of (\ref{H_p}), see e.g. \cite{L phi}.
The inequality (\ref{H_p}) has great applications to partial differential equations, for example stability, global existence, and instantaneous blow-up and so on. See e.g. \cite{BV}, \cite{BG}. 
It is well-known that in (\ref{H_p}) $(\frac{N-p}{p})^p$ is the optimal constant and is not attained in $W_0^{1,p}(\Omega)$.

%Therefore we can expect the existence of remainder terms of (\ref{H_p}). Indeed there exist several remainder terms of (\ref{H_p}), see \cite{BV}, \cite{FT}, \cite{BFT(IUMJ)}, \cite{BFT(TAMS)}, \cite{GGM}, \cite{CF}.
%And also there are applications of remainder terms to PDE, see \cite{VZ}, \cite{CR}, \cite{ACR}, \cite{AE} and so on.

On the other hand, in the critical case where $p=N$, the following inequality which is called the critical Hardy inequality holds for all $u \in W^{1,N}_0(\Omega)$ and all $a \ge 1$, where $R =\sup_{x \in \Omega} |x|$\,:
\begin{align}
\label{H_N}
\( \frac{N-1}{N} \)^N \intO \dfrac{|u|^N}{|x|^N (\log \frac{aR}{|x|})^N} dx \le \intO | \nabla u |^N dx.
\end{align}
See e.g. \cite{Leray}, \cite{Ladyzhenskaya}, \cite{BFT(IUMJ)}, \cite{BFT(TAMS)}, \cite{GM book} Corollary 9.1.2., \cite{MOW}, \cite{TF}.
It is known that in (\ref{H_N}) $(\frac{N-1}{N})^N$ is the optimal constant and is not attained for any bounded domain $\Omega$ with $0 \in \Omega$ (see \cite{AS}, \cite{AE}, \cite{II}, \cite{BT} etc.).

In this paper, we consider optimal constants and its attainability of the following inequalities (\ref{GH_N}) which are generalizations of (\ref{H_N}):
\begin{align}\label{GH_N}
G_a \( \intO \frac{|u|^q}{ |x|^N (\log \frac{aR}{|x|} )^{\beta}} dx \)^{\frac{N}{q}} \le \intO |\nabla u |^N dx
\end{align}
for $u \in W_0^{1,N}(\Omega ), q,\beta >1$, and $a \ge 1$.
We define $G_a$ and $G_{a, {\rm rad}}$ as the optimal constants of the inequalities (\ref{GH_N}) as follows:
\begin{align}
\label{GH_N const}
	G_a := \inf_{u \in W_0^{1,N}(\Omega ) \setminus \{ 0\} } \dfrac{\intO | \nabla u |^N \,dx}{\( \intO \frac{|u|^q}{|x|^N (\log \frac{aR}{|x|})^{\beta}} dx \)^{\frac{N}{q}}}, \,\, G_{a,{\rm rad}} := \inf_{u \in W_{0, {\rm rad}}^{1,N}(\Omega ) \setminus \{ 0\} } \dfrac{\intO | \nabla u |^N \,dx}{\( \intO \frac{|u|^q}{|x|^N (\log \frac{aR}{|x|})^{\beta}} dx \)^{\frac{N}{q}}},
\end{align}
where $W_{0,{\rm rad}}^{1,N}(\Omega)= \{ \, u \in W_0^{1,N}(\Omega) \, | \, u \,\,\text{is radial} \, \}$.
When $\Omega = B_R(0), \, \beta =\frac{N-1}{N} q +1$, and $q > N$, the exact optimal constant and the attainability of $G_{a, {\rm rad}}$ are investigated by Horiuchi and Kumlin \cite{HK}. However we do not know the attainability of $G_a$ even if $\Omega = B_R(0)$. In fact, in their article \cite{HK} they mention that the attainability of $G_a$ is an open problem. See also \cite{H}. Note that the continuous embedding $W_0^{1,N}(B_R(0)) \hookrightarrow L^q(B_R(0) ; |x|^{-N}(\log \frac{aR}{|x|})^{-\beta}dx)$ is not compact when $\beta =\frac{N-1}{N}q +1, q \ge N$, and $a >1$. In addition, the rearrangement technique does not work due to the lack of monotone decreasing property of the potential function $|x|^{-N}(\log \frac{aR}{|x|})^{-\beta}$ when $1\le a < e^{\frac{\beta}{N}}$.

In this paper, we study the existence, non-existence, and symmetry breaking property of the minimizers of $G_a$.
First, we give an answer to the open problem except for $a=a_*$ which is a threshold number when $\Omega = B_R(0)$. More precisely, we show that there exists a minimizer of $G_a$ for $a \in (1, a_*)$ and there is no minimizer for $a > a_*$. 
Next, we extend the results to general bounded domains. Furthermore we investigate the positivity and the attainability of $G_1$ in general bounded domains. When $a=1$, the positivity and the attainability of $G_1$ depend on geometry of the boundary of the domain since the potential function has singularities on the boundary. 
Finally, we show that when $\Omega =B_R(0)$, any minimizers of $G_a$ are non-radial for large $\beta$ and fixed $q > N$, and any minimizers are radial for any $\beta$ and any $q \le N$.

Our problem is regarded as the critical case of one of Caffarelli-Kohn-Nirenberg type inequalities, see \cite{HK}. 
In the weighted subcritical Sobolev spaces $W_0^{1,p}(|x|^{\alpha} dx)$ where $p < N +\alpha$, the existence, nonexistence, and symmetry breaking property of the minimizers of Caffarelli-Kohn-Nirenberg type inequalities are well-studied especially for $p=2$, see \cite{T}, \cite{L}, \cite{CC}, \cite{Horiuchi}, \cite{CM}, \cite{CW1}, \cite{CW2}, \cite{SSW}, \cite{GK}, \cite{GR(2006)}, \cite{CL} and references therein.

Our minimization problem (\ref{GH_N const}) is related to the following nonlinear elliptic equation with the singular potential:
\begin{align}\label{EL}
\begin{cases}
-\text{div} \,( \, |\nabla u|^{N-2} \nabla u \,) = b \frac{|u|^{q-2}u}{|x|^N (\log \frac{aR}{|x|})^{\beta}} \quad &\text{in} \,\, \Omega, \\
\qquad u = 0  &\text{on} \,\, \pd \Omega.
\end{cases}
\end{align}
The minimizer for $G_a$ is a ground state solution of the Euler-Lagrange equation (\ref{EL}) with a Lagrange multiplier $b$. 

This paper is organized as follows:
In section \ref{Pre}, necessary preliminary facts are presented. 
In section \ref{ball case}, we prove the (non-)attainability of $G_a$ when $\Omega =B_R(0)$ and $a>1$. 
In section \ref{bdd case}, we extend the results to several bounded domains, and we investigate the positivity and the attainability of $G_1$ in several bounded domains.
In section \ref{sym break}, we show that symmetry breaking phenomena of the minimizers of $G_a$ occur for large $\beta$.

We fix several notations: 
$B_R(0)$ and $B_R^N(0)$ denote a $N$-dimensional ball centered $0$ with radius $R$ and $\omega_{N-1}$ denotes an area of the unit sphere $\mathbb{S}^{N-1}$ in $\re^N$. $|A|$ denotes the Lebesgue measure of a set $A \subset \re^N$. 
The Schwarz symmetrization $u^{\#} \colon \re^N \to [0, \infty]$ of $u$ is given by 
\begin{align*}
u^{\#}(x) = u^{\#}(|x|)=\inf \left\{ \tau >0 \,: \, | \{ y \in \re^N \, :\, |u(y)| > \tau \} \,| \le |B_{|x|}(0) | \right\}.
\end{align*}
%Throughout the paper, if a radial function $u$ is written as $u(x) = \tilde{u}(|x|)$ by some function $\tilde{u} = \tilde{u}(r)$, we write $u(x)= u(|x|)$ with admitting some ambiguity.
%We hope no confusion occurs by this abbreviation.

%%%%%%%%%%%%%%%%%%%%%%%%%%%%%%%%%%%%%%%%%%%%%%%%%%%%%%%%%%%%%%%%%%%%%%%%%%%%%%%%%%%%%%%%%%%%%%%%%%%%%%%%%%%%%%%%%%%
%
% \S 2 Preliminaries 
%
%%%%%%%%%%%%%%%%%%%%%%%%%%%%%%%%%%%%%%%%%%%%%%%%%%%%%%%%%%%%%%%%%%%%%%%%%%%%%%%%%%%%%%%%%%%%%%%%%%%%%%%%%%%%%%%%%%%

\section{Preliminaries}\label{Pre}

In this section, we give a necessary and sufficient condition of the positivity of $G_a$ for $a \in [1, \infty)$. Furthermore we give the explicit value of $G_{a,{\rm}}$ and the minimizers when $\beta = \frac{N-1}{N}q +1$ and $q >N$. 
First, we give a necessary and sufficient condition (\ref{positivity cond}) of the positivity of $G_a$ when $a>1$.

\begin{prop}\label{GH_N positive}
Let $a>1$, $\Omega \subset \re^N$ be a bounded domain with $0 \in \Omega$, $R=\sup_{x \in \Omega} |x|, N \ge 2$ and $q, \beta>1$. Then $G_a > 0$ if and only if $\beta$ and $q$ satisfy
\begin{align}\label{positivity cond}
\text{either}\,\,\,\beta > \frac{N-1}{N} q + 1 \,\,\,\text{or}\,\,\,\beta = \frac{N-1}{N} q + 1, q \ge N.
\end{align}
\end{prop}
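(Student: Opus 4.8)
The plan is to note first that $G_a>0$ is equivalent to the continuous embedding inequality $\big(\intO |u|^q f_{a,\beta}\,dx\big)^{N/q}\le C\intO|\nabla u|^N\,dx$ holding for all $u\in W_0^{1,N}(\Omega)$, and to characterize exactly when such $C$ exists. The engine is a one-dimensional reduction. For a radial $u=u(r)$, the substitution $s=\log(aR/|x|)$, together with $T:=\log a>0$ (here $a>1$ is used), turns both sides into $\intO|\nabla u|^N\,dx=\omega_{N-1}\int_T^\infty|v'(s)|^N\,ds$ and $\intO |u|^q f_{a,\beta}\,dx=\omega_{N-1}\int_T^\infty s^{-\beta}|v(s)|^q\,ds$, where $v(s)=u(aRe^{-s})$ satisfies $v(T)=0$. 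Thus the radial inequality is precisely the weighted Hardy inequality on the half-line $(T,\infty)$ with exponent $N$ on the gradient, gradient weight $1$, and potential weight $s^{-\beta}$.

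I would then dispose of the necessity (the ``only if'' direction) and the radial case together, using the classical characterization of one-dimensional Hardy inequalities (Muckenhoupt, Tomaselli). For $q\ge N$ (the case $p\le q$) the inequality holds iff the supremum condition $A:=\sup_{r>T}\big(\int_r^\infty s^{-\beta}\,ds\big)^{1/q}(r-T)^{(N-1)/N}<\infty$; computing the exponent of $r$ as $r\to\infty$ gives $A<\infty\iff\beta\ge\frac{N-1}{N}q+1$. For $1<q<N$ (the case $q<p$) the relevant condition is the Tomaselli integral condition $B<\infty$, and the same exponent computation yields $B<\infty\iff\beta>\frac{N-1}{N}q+1$, the borderline $\beta=\frac{N-1}{N}q+1$ now diverging. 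Combining the two ranges reproduces exactly (\ref{positivity cond}). In particular, when (\ref{positivity cond}) fails the one-dimensional inequality fails, so $G_{a,{\rm rad}}=0$; since $G_a\le G_{a,{\rm rad}}$ (the infimum over radial functions is taken over a smaller set), this forces $G_a=0$ and proves necessity.

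The main obstacle is sufficiency for general, non-radial $u$, because the rearrangement cannot be used globally when $a<e^{\beta/N}$, as $f_{a,\beta}$ is then not radially nonincreasing. My plan is a localization. From $\frac{d}{dr}\log\!\big(r^{-N}(\log(aR/r))^{-\beta}\big)=\frac1r\big(\frac{\beta}{\log(aR/r)}-N\big)$ one sees $f_{a,\beta}$ is radially decreasing on $B_\delta(0)$ for every $\delta<aRe^{-\beta/N}$; fix such a $\delta<R$. On $\Omega\setminus B_{\delta/2}(0)$ the weight is bounded (since $\log(aR/|x|)\ge\log a>0$ there), so by the embedding $W_0^{1,N}(\Omega)\hookrightarrow L^q(\Omega)$, valid for every finite $q$ (Moser--Trudinger) together with Poincar\'e, one gets $\int_{\Omega\setminus B_{\delta/2}}|u|^q f_{a,\beta}\,dx\le C\|\nabla u\|_{L^N}^q$. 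Near the origin, pick a cutoff $\eta\in C_c^\infty(B_\delta)$ with $\eta\equiv1$ on $B_{\delta/2}$; then $\eta u\in W_0^{1,N}(B_\delta)$, and since $f_{a,\beta}$ equals its own symmetric decreasing rearrangement on $B_\delta$, the Hardy--Littlewood and P\'olya--Szeg\H o inequalities give $\int_{B_{\delta/2}}|u|^q f_{a,\beta}\,dx\le\int_{B_\delta}|\eta u|^q f_{a,\beta}\,dx\le\int_{B_\delta}|(\eta u)^{\#}|^q f_{a,\beta}\,dx$. As $(\eta u)^{\#}$ is a radial element of $W_0^{1,N}(B_\delta)$, the one-dimensional Hardy inequality on $(\log(aR/\delta),\infty)\subset(T,\infty)$ — which holds under (\ref{positivity cond}) by the previous paragraph — bounds this by $C\big(\int_{B_\delta}|\nabla(\eta u)^{\#}|^N\big)^{q/N}\le C\big(\int_{B_\delta}|\nabla(\eta u)|^N\big)^{q/N}\le C'\|\nabla u\|_{L^N}^q$. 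Adding the two regions yields the embedding inequality, hence $G_a>0$.

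The steps I expect to need the most care are the uniformity of the one-dimensional constant and the borderline $q<N$ range. Since the quantities $A$ and $B$ are monotone in the left endpoint of the half-line, the constant on $(\log(aR/\delta),\infty)$ is controlled by the one on $(T,\infty)$, which keeps the estimate uniform; and the divergence at $\beta=\frac{N-1}{N}q+1$ for $q<N$ is exactly what forces the strict inequality in (\ref{positivity cond}) in that range. The localization is the device that replaces the unavailable global symmetrization, and the technical crux is choosing $\delta<R$ so that $B_\delta$ carries a radially decreasing weight while its complement carries a bounded one.
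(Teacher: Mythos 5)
Your proposal is correct, but it takes a genuinely different route from the paper's. The paper gives essentially no direct argument: it derives the proposition from Theorem A of Machihara--Ozawa--Wadade \cite{MOW(ref)}, the characterization of the logarithmic Hardy inequality in the critical Sobolev--Lorentz scale $H^{N/p}_{p,r}(\re^N)$, applied with $p=r=N$ and $\alpha=q$ (using that the $H^1_{N,N}$-norm is equivalent to the $W^{1,N}$-norm), and explicitly omits the ``simple calculations'' needed to transfer from $B_{1/2}(0)$ with weight $\log\frac{1}{|x|}$ and full Sobolev norm to $\Omega$ with weight $\log\frac{aR}{|x|}$ and gradient norm. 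You replace this citation by a self-contained classical mechanism: the substitution $s=\log\frac{aR}{|x|}$ reduces the radial inequality to a weighted one-dimensional Hardy inequality on $(T,\infty)$, $T=\log a>0$, whose validity is read off from the Muckenhoupt condition for $q\ge N$ and the Maz'ya--Rozin/Tomaselli-type integral condition for $1<q<N$ (attribution aside, the conditions you state are the standard correct ones), and your exponent computations reproduce (\ref{positivity cond}) exactly, including the borderline dichotomy $\beta=\frac{N-1}{N}q+1$ holding iff $q\ge N$. For sufficiency you circumvent the failure of global rearrangement when $1<a<e^{\beta/N}$ --- precisely the obstruction the paper flags in its introduction --- by localization: $f_{a,\beta}$ is radially decreasing on $B_\delta(0)$ for $\delta<aRe^{-\beta/N}$, so cutting off, symmetrizing on $B_\delta$ via Hardy--Littlewood and P\'olya--Szeg\H{o}, and invoking the one-dimensional inequality on $(\log\frac{aR}{\delta},\infty)$ handles the singular region, while on $\Omega\setminus B_{\delta/2}(0)$ the weight is bounded (this is where $a>1$ enters) and $W_0^{1,N}(\Omega)\hookrightarrow L^q(\Omega)$ suffices; your monotonicity remark on $A$ and $B$ keeps the constant uniform. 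What each approach buys: the paper's is short and inherits the generality of the Lorentz-scale theorem; yours is longer but elementary, handles $a>1$ and the weight $\log\frac{aR}{|x|}$ directly, and as a by-product gives the positivity of $G_{a,\rm rad}$ with explicit control. One small repair is needed in your necessity step: for a general bounded $\Omega\subsetneq B_R(0)$, radial functions vanishing only at $|x|=R$ need not belong to $W_0^{1,N}(\Omega)$, so the half-line $(T,\infty)$ with $T=\log a$ is not literally available; however, the quantities $A$ and $B$ diverge as $r\to\infty$, i.e., the failure is caused by concentration at the origin, so the witnesses of failure can be taken supported in small balls $B_\rho(0)\subset\Omega$ (half-lines $(\log\frac{aR}{\rho},\infty)$), which are admissible since $0\in\Omega$; with this remark your conclusion $G_a\le G_{a,{\rm rad}}=0$ stands.
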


Essentially, Proposition \ref{GH_N positive} is proved by the following theorem in \cite{MOW(ref)}.
The authors in \cite{MOW(ref)} show a necessary and sufficient condition of the positivity for more general inequalities in the critical Sobolev-Lorentz spaces $H^s_{p,q}(\re^N)$. 
%, see also \cite{ET}, \cite{GNO}, \cite{GNO2}, \cite{N}, \cite{OT}, \cite{OT2}. 
Note that the norm of $H^1_{N,N}(\re^N)$ is equivalent to it of $W^{1,N}(\re^N)$. 
We can obtain Proposition \ref{GH_N positive} from Theorem A and simple calculations. We omit the proof here.

\begin{ThmA}(\cite{MOW(ref)} Theorem 1.1.)
Let $N \in \N, 1<p<\infty, 1<r \le \infty$ and $1<\alpha, \beta <\infty$. Then there exists a constant $C>0$ such that for all $u \in H^{\frac{N}{p}}_{p,r}(\re^N)$, the inequality
\begin{align}\label{MOW}
\( \int_{B_{\frac{1}{2}}(0)} \frac{|u|^\alpha}{ |x|^N (\log \frac{1}{|x|} )^{\beta}} dx \)^{\frac{1}{\alpha }} \le C \| u \|_{H^{\frac{N}{p}}_{p,r}(\re^N)}
\end{align}
holds true if and only if one of the following conditions (i)`(iii) is fulfilled
\begin{align}\label{MOW condition}
\begin{cases}
(i) \,&1+\alpha -\beta < 0, \\
(ii) \,&1+\alpha -\beta \ge 0 \,\, \text{and} \,\, r<\frac{\alpha}{1+\alpha -\beta}, \\
(iii) \,&1+\alpha -\beta > 0, r = \frac{\alpha}{1+\alpha -\beta}, \,\,\text{and}\,\, \alpha \ge \beta. 
\end{cases}
\end{align}
\end{ThmA}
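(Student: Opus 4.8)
The plan is to characterize the two-sided estimate by transporting both sides to the half-line through the decreasing rearrangement of $u$ together with the sharp embedding of the critical Sobolev--Lorentz space into a Lorentz--Zygmund space; the statement then collapses to a one-dimensional weighted Hardy inequality for monotone functions, whose validity is governed precisely by the three regimes in (\ref{MOW condition}).

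\emph{The key input.} Write $s=N/p$, so that $sp=N$ and we sit at the critical smoothness. The first step is to record the sharp critical embedding: since $H^{s}_{p,r}(\re^N)=(1-\Delta)^{-s/2}L^{p,r}(\re^N)$ is not contained in $L^\infty$, one instead has, with a constant $C>0$ independent of $u$,
\begin{align*}
\left( \int_0^{1/2} \Big[ (\log (e/t))^{-1}\, u^*(t) \Big]^{r}\, \frac{dt}{t} \right)^{1/r} \le C\, \| u \|_{H^{s}_{p,r}(\re^N)},
\end{align*}
with $\sup_{0<t<1/2}(\log(e/t))^{-1}u^*(t)$ replacing the left side when $r=\infty$, where $u^*$ is the decreasing rearrangement. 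I would prove this endpoint estimate along the O'Neil--Adams line: write $u=G_s*g$ with $\|g\|_{L^{p,r}}\approx\|u\|_{H^s_{p,r}}$, use that the Bessel kernel satisfies $G_s^*(t)\approx t^{-1/p'}$ for small $t$ (this is where $sp=N$ enters), and apply O'Neil's convolution inequality in Lorentz spaces, integrating the resulting logarithmic factor. I would also record the \emph{sharpness} of this embedding on radial profiles: for any non-increasing $\phi$ finite in the left-hand norm there is a radial $u\in H^s_{p,r}(\re^N)$ with $u^*\approx\phi$ near $0$ and $\|u\|_{H^s_{p,r}}\le C\,(\text{left-hand norm of }\phi)$, which is the ingredient that makes the necessity direction possible.

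\emph{Reduction to one dimension.} Because the weight $|x|^{-N}(\log\frac1{|x|})^{-\beta}$ is radial and essentially non-increasing as a function of the measure $t$ of $B_{|x|}(0)$, the Hardy--Littlewood inequality gives
\begin{align*}
\int_{B_{1/2}(0)} \frac{|u|^\alpha}{|x|^N (\log\frac1{|x|})^{\beta}}\,dx \le C \int_0^{c_0} (u^*(t))^\alpha\, \frac{(\log(e/t))^{-\beta}}{t}\,dt ,
\end{align*}
and the reverse bound holds for $u=u^\#$ radial, so no information is lost. After the substitution $\tau=\log(e/t)$ and writing $\phi(\tau)=u^*(e^{1-\tau})$ (a non-decreasing function), both (\ref{MOW}) and the embedding norm become power-weighted integrals against $d\tau$, and the whole theorem reduces to deciding for which $(\alpha,\beta,r)$ the Hardy-type inequality
\begin{align*}
\left( \int_{\tau_0}^\infty \phi(\tau)^\alpha \tau^{-\beta}\,d\tau \right)^{1/\alpha} \le C \left( \int_{1}^\infty \phi(\tau)^r \tau^{-r}\,d\tau \right)^{1/r}
\end{align*}
holds for every non-decreasing $\phi\ge0$. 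Testing with the powers $\phi(\tau)=\tau^\gamma$ identifies the two critical exponents $\gamma^{*}=1-1/r$ (convergence on the right) and $\gamma^{**}=(\beta-1)/\alpha$ (convergence on the left), and the trichotomy $\gamma^{**}>\gamma^{*}$, $\gamma^{**}=\gamma^{*}$, $\gamma^{**}<\gamma^{*}$ translates exactly into $1+\alpha-\beta<0$ or $r<\alpha/(1+\alpha-\beta)$, into $r=\alpha/(1+\alpha-\beta)$, and into the remaining failing range, matching (\ref{MOW condition}). For sufficiency one invokes the weighted Hardy inequality for monotone functions; for necessity one feeds a power (or, in the double-borderline, power-times-iterated-logarithm) test profile into the sharpness statement above to manufacture a genuine $u$ violating (\ref{MOW}).

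\emph{The main obstacle.} The delicate point is the equality regime $1+\alpha-\beta>0$, $r=\alpha/(1+\alpha-\beta)$, where $\gamma^{*}=\gamma^{**}$ and the crude power test is inconclusive: both integrals diverge simultaneously and one must resolve the competition at the level of the secondary (logarithmic) scale. A short computation shows that in these units $\alpha\ge\beta$ is equivalent to $\alpha\ge r$, and the one-dimensional Hardy inequality with matched power weights is valid for non-decreasing functions exactly when the target index dominates the source index, i.e. $\alpha\ge r$; this is where the condition $\alpha\ge\beta$ in (iii) originates. Establishing the sharp critical embedding with its reverse estimate on radial functions, and handling this borderline with a logarithmically refined test profile, are the two steps I expect to demand the most care.
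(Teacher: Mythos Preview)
The paper does not contain a proof of Theorem~A. This result is quoted verbatim as Theorem~1.1 of \cite{MOW(ref)} (Machihara--Ozawa--Wadade) and is used as a black box to derive Proposition~\ref{GH_N positive}; no argument for it appears anywhere in the paper. Consequently there is nothing in the paper to compare your proposal against.

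That said, your sketch is a coherent outline of how such a result is typically proved, and is in the spirit of the original source: pass to the decreasing rearrangement, invoke the sharp critical embedding $H^{N/p}_{p,r}(\re^N)\hookrightarrow L^{\infty,r}(\log L)^{-1}$ near the origin (together with its reversibility on radial profiles), and reduce (\ref{MOW}) to a one-dimensional weighted Hardy inequality for monotone functions on $(\tau_0,\infty)$, whose validity is then read off from the exponents. Your identification of the borderline case $r=\alpha/(1+\alpha-\beta)$ as the one requiring a secondary logarithmic analysis, and the observation that there $\alpha\ge\beta\Leftrightarrow\alpha\ge r$, is correct and is exactly the mechanism behind condition~(iii). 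One small point to be careful with in a full write-up: the weight $|x|^{-N}(\log\frac{1}{|x|})^{-\beta}$ is \emph{not} globally radially decreasing on $B_{1/2}(0)$ (it increases for $|x|$ near $e^{-\beta/N}$ when this lies in the ball), so the Hardy--Littlewood step needs either a restriction to a smaller ball or a comparison of the weight with its own decreasing rearrangement; this is harmless for the final estimate but should be said explicitly.
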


Next, we give a necessary and sufficient condition of the positivity of $G_a$ when $a=1$ and $\Omega =B_R(0)$.
Essentially, the following proposition follows from results in \cite{HK}. 

\begin{prop}\label{G_1 positive}
Let $\Omega = B_R(0)$. Then $G_1 > 0$ if and only if $\beta=q=N$.
\end{prop}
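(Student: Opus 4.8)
The plan is to prove both implications by isolating the two singularities of the weight $f_{1,\beta}(x)=|x|^{-N}(\log\frac{R}{|x|})^{-\beta}$: the origin, and the \emph{whole} boundary $\pd B_R(0)$, the latter appearing only because $a=1$ forces $\log\frac{R}{|x|}\to 0$ as $|x|\to R$. Sufficiency is immediate: when $\beta=q=N$ the quotient defining $G_1$ is precisely the critical Hardy inequality $(\ref{H_N})$ with $a=1$ on $\Omega=B_R(0)$, so $G_1\ge(\frac{N-1}{N})^N>0$. For necessity I assume $(\beta,q)\neq(N,N)$ and show $G_1=0$.

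First I would treat radial competitors via the exact change of variables $t=\log\frac{R}{|x|}\in(0,\infty)$, which gives, for radial $u$,
$$\intO |\nabla u|^N\,dx=\omega_{N-1}\int_0^\infty|\dot u|^N\,dt,\qquad \intO f_{1,\beta}|u|^q\,dx=\omega_{N-1}\int_0^\infty\frac{|u|^q}{t^{\beta}}\,dt,$$
so that $G_{1,{\rm rad}}$ is a multiple of the one–dimensional infimum $I(q,\beta)=\inf_{u(0)=0}\int_0^\infty|\dot u|^N\,dt\,/\,(\int_0^\infty|u|^q t^{-\beta}\,dt)^{N/q}$, and $G_1\le G_{1,{\rm rad}}$. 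Testing with $u_\la(t)=u(\la t)$ shows the quotient scales like $\la^{(N-1)-\frac Nq(\beta-1)}$; hence off the critical line $\beta=\frac{N-1}{N}q+1$ one may send $\la\to0$ or $\la\to\infty$ appropriately to force $I(q,\beta)=0$, i.e. $G_1=0$. On the critical line I remove the scaling by setting $t=e^{\sigma}$ and $u=t^{(N-1)/N}w$, reducing $I$ to $\inf_{w}\int_{\re}|w'+\frac{N-1}{N}w|^N\,d\sigma\,/\,(\int_{\re}|w|^q\,d\sigma)^{N/q}$; a spreading plateau $w\equiv1$ on an interval of length $\ell\to\infty$ makes this ratio behave like $\ell^{\,1-N/q}$, which tends to $0$ exactly when $q<N$. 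Thus $G_{1,{\rm rad}}=0$, and \emph{a fortiori} $G_1=0$, unless one is on the critical line with $q\ge N$.

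It then remains to rule out the critical line with $q>N$, where $\beta=\frac{N-1}{N}q+1>N$ and the radial problem is in fact positive, so the degeneracy must come from non-radial functions concentrating at the boundary. Near a fixed point $x_0\in\pd B_R(0)$ I would use that the distance $d(x)$ to $\pd B_R(0)$ satisfies $\log\frac{R}{|x|}=\frac{d(x)}{R}(1+o(1))$ and $|x|^{-N}=R^{-N}(1+o(1))$, so $f_{1,\beta}(x)\sim R^{\beta-N}d(x)^{-\beta}$. Working in boundary coordinates I insert a fixed bump $\phi\in C_c^\infty(\re^N_+)$ supported away from $\{y_N=0\}$ and set $u_\la(y)=\phi(\la y)$ with $\la\to\infty$; scale invariance of the $N$–Dirichlet energy gives $\int|\nabla u_\la|^N=\int|\nabla\phi|^N$, while $\int f_{1,\beta}|u_\la|^q\sim R^{\beta-N}\la^{\beta-N}\int y_N^{-\beta}|\phi|^q\,dy$, so the quotient behaves like $\la^{-\frac Nq(\beta-N)}\to0$. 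Hence $G_1=0$ here as well, and combining all cases yields $G_1>0\iff\beta=q=N$.

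The main obstacle is this last boundary-concentration step: unlike the origin, the boundary is a curved hypersurface, so I must justify carefully that the spherical geometry — the curvature of $\pd B_R(0)$, the discrepancy between $d(x)$ and $R-|x|$, and the expansion of $\log\frac{R}{|x|}$ — contributes only lower-order corrections that leave the leading power of $\la$ untouched, and that each $u_\la$ genuinely belongs to $W_0^{1,N}(B_R(0))$.
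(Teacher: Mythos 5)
Your proof is correct and complete, but it takes a genuinely different route from the paper's. The paper never proves Proposition \ref{G_1 positive} directly: it obtains it as the special case $\Gamma=\pd B_R(0)$ of Proposition \ref{G_1 positive general} in Section \ref{bdd case}, whose proof rests on two test-function constructions --- the family $u_s=(\log\frac{R}{r})^s\psi(\w)\phi(r)$ with $s\searrow\frac{N-1}{N}$, giving finite energy but divergent weighted norm when $\beta>\frac{N-1}{N}q+1$, and a fixed-profile bump on $B_\ep(x_\ep)$ at distance $2\ep$ from the boundary, giving $G_1=0$ when $\beta>N$ --- and then disposes of all remaining degenerate cases by invoking the necessity half of (\ref{positivity cond}) (Proposition \ref{GH_N positive}, i.e.\ Theorem A of Machihara--Ozawa--Wadade), since $G_1\le G_a$; sufficiency for $\beta=q=N$ is (\ref{H_N}) with $a=1$, as in your argument. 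You instead exploit the radial structure of the ball: the one-dimensional reduction $t=\log\frac{R}{|x|}$ together with the scaling $u(\la t)$ settles every $(q,\beta)$ off the critical line $\beta=\frac{N-1}{N}q+1$ in one stroke (concentration at the boundary as $\la\to\infty$ for $\beta$ above the line, at the origin as $\la\to0$ below it), and your Emden--Fowler plateau handles the critical line with $q<N$; only the boundary-bump step for the critical line with $q>N$ (where $\beta>N$) coincides with the paper's second construction. What each approach buys: yours is elementary and self-contained --- it never cites Theorem A --- but is specific to the ball, while the paper's argument is designed to work on any domain whose boundary meets $\pd B_R(0)$ in a relatively open set, which Section \ref{bdd case} needs anyway. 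Finally, the ``main obstacle'' you flag at the end is not an obstacle: for $\Omega=B_R(0)$ the distance to the boundary is exactly $R-|x|$, and you can dispense with boundary coordinates and curvature considerations altogether by placing the rescaled bump on an interior ball $B_{\la^{-1}}(x_\la)$ with $|x_\la|=R-2\la^{-1}$, exactly as the paper does, using only the elementary bounds $\frac{R-|x|}{R}\le\log\frac{R}{|x|}\le\frac{R-|x|}{|x|}$; membership of $u_\la$ in $W_0^{1,N}(B_R(0))$ is then automatic because its support is compactly contained in the ball.
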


In \S 4, we extend Proposition \ref{G_1 positive} to general bounded domains. Proposition \ref{G_1 positive} follows from Proposition \ref{G_1 positive general} in \S 4. Thus we omit the proof of Proposition \ref{G_1 positive} here.

Finally, we give the explicit value of the optimal constant $G_{a, {\rm rad}}$ and the minimizers when $\beta =\frac{N-1}{N}q +1$ and $q>N$. 

Logarithmic transformations related to $G_{a, {\rm rad}}$ are founded by \cite{HK}, \cite{II 1}, \cite{Z}, \cite{ST}. 
%Essentially, Proposition \ref{G_rad} can be proved by a logarithmic transformation used by \cite{HK}, \cite{II 1}, \cite{Z}, \cite{ST}. 
Especially, in the radial setting, the authors in \cite{ST} show an unexpected relation (\ref{H_p to H_N eq}) that the critical Hardy inequality in dimension $N \ge 2$ is equivalent to the one of the subcritical Hardy inequalities in higher dimension $m > N$ by using a transformation (\ref{trans}) as follows:
\begin{align}
\label{H_p to H_N eq}
	&\int_{\re^m} | \nabla u |^N \,dx - \( \frac{m-N}{N} \)^N \int_{\re^m} \frac{|u|^N}{|x|^N} \,dx \notag \\
	&= \frac{\w_{m-1}}{\w_{N-1}} \( \frac{m-N}{N-1} \)^{N-1} \( \int_{B_R^N(0)} | \nabla w |^N \,dy - \( \frac{N-1}{N} \)^N \int_{B_R^N(0)} \dfrac{|w|^N}{|y|^N \( \log \frac{R}{|y|} \)^N} \,dy \),  \\
\label{trans}
&\text{where} \,\,u(|x|) = w(|y|)\,\,\text{and}\,\, \( \log \frac{R}{|y|} \)^{\frac{N-1}{N}} = |x|^{-\frac{m-N}{N}}.
\end{align}
By using the transformation (\ref{trans}) and direct calculations, we can observe not only an equivalence between two Hardy inequalities but also the equivalence between Hardy-Sobolev type inequalities and generalized critical Hardy inequalities in the radial setting as follows:
\begin{align}\label{G_rad to HS}
G_{1, {\rm rad}} &= \inf_{w \in W_{0, {\rm rad}}^{1,N}(B_R^N(0)) \setminus \{ 0\}} 
\dfrac{\int_{B_R^N(0)} | \nabla w |^N \,dy}{\( \int_{B_R^N(0)} \frac{|w|^q}{|y|^N (\log \frac{R}{|y|})^{\beta}} dy \)^{\frac{N}{q}}} \notag \\
&= \( \frac{\w_{N-1}}{\w_{m-1}} \)^{1-\frac{N}{q}} \( \frac{N-1}{m-N} \)^{N-1+\frac{N}{q}} \inf_{u \in W_{0, {\rm rad}}^{1,N}(\re^m ) \setminus \{ 0\}} \dfrac{\int_{\re^m} | \nabla u |^N \,dx}{\( \int_{\re^m} |x|^{\alpha}|u|^q dx \)^{\frac{N}{q}}},
\end{align}
where $\alpha =\frac{m-N}{N-1}(\beta -1) -m$. The authors in \cite{ST} also give a transformation which is a modification of (\ref{trans}) when $a>1$. Since the minimization problems on the right hand side of (\ref{G_rad to HS}) are well-known (see e.g. \cite{T}, \cite{L}), we can obtain the following proposition by using these transformations. 

\begin{prop}\label{G_rad}
Let $\beta =\frac{N-1}{N}q +1, q > N$, and $\Omega =B_R(0)$. Then the followings hold.

\noindent
(i) $G_{a,{\rm rad}}$ is independent of $a \ge 1$. Furthermore, the exact value of the optimal constant is as follows:
\begin{align*}
G_{a, {\rm rad}} =G_{\rm rad} := \w_{N-1}^{1-\frac{N}{q}} (N-1) \( \frac{N}{q} \)^{1-\frac{2N}{q}} \( 1-\frac{N}{q} \)^{-2+\frac{2N}{q}} \( \frac{\Gamma \( \frac{q(N-1)}{q-N} \) \Gamma \( \frac{N}{q-N} \) }{\Gamma \( \frac{qN}{q-N} \)}  \)^{1-\frac{N}{q}},
\end{align*}
where $\Gamma(\cdot)$ is the gamma function. 

\noindent
(ii) $G_{a, {\rm rad}}$ is not attained for any $a >1$. 

\noindent
(iii) $G_{1, {\rm rad}}$ is attained by the family of the following functions $U_\la$:
\begin{align*}
U_\la (y)= C \la^{-\frac{N-1}{N}} \( 1+ \( \la \log \frac{R}{|y|} \)^{-\frac{q-N}{N}} \)^{-\frac{N}{q-N}}, \,\text{where}\,\, C \in \re \setminus \{ 0\} \,\,\text{and}\,\,\la >0.
\end{align*}
\end{prop}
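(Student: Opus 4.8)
The plan is to reduce all three assertions to the radial critical Hardy--Sobolev (Caffarelli--Kohn--Nirenberg) minimization problem on $\re^m$, using the transformation (\ref{trans}) together with its $a>1$ modification from \cite{ST}, and then to read off the value, the extremals, and the (non-)attainment from the classical theory of that problem. Fix any $m>N$. By (\ref{G_rad to HS}), up to the explicit positive constant displayed there, the radial quotient defining $G_{1,{\rm rad}}$ is carried to $\int_{\re^m}|\nabla u|^N\,dx\,/\,\bigl(\int_{\re^m}|x|^{\alpha}|u|^q\,dx\bigr)^{N/q}$ with $\alpha=\frac{m-N}{N-1}(\beta-1)-m$. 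The hypothesis $\beta=\frac{N-1}{N}q+1$ is precisely the relation making $q=\frac{N(m+\alpha)}{m-N}$ the critical Hardy--Sobolev exponent for the weight $|x|^{\alpha}$, so the transformed functional is invariant under the dilation $u\mapsto s^{(m-N)/N}u(s\,\cdot)$ for every admissible $m$. The decisive point is to track the image domain: for $a=1$, as $|y|$ sweeps $(0,R)$ the relation $(\log\frac{R}{|y|})^{(N-1)/N}=|x|^{-(m-N)/N}$ sends $|x|$ onto all of $(0,\infty)$, so the problem lives on the whole space $\re^m$; for $a>1$ the factor $\log\frac{aR}{|y|}$ stays bounded below by $\log a>0$ as $|y|\to R$, hence $|x|$ ranges only over $(0,\rho_a)$ with $\rho_a=(\log a)^{-(N-1)/(m-N)}$, and $G_{a,{\rm rad}}$ corresponds to the same critical quotient minimized over $W^{1,N}_{0,{\rm rad}}(B^m_{\rho_a}(0))$, a bounded ball containing the origin.

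For part (i), since the critical quotient is dilation invariant, its radial infimum coincides on $\re^m$ and on every ball $B^m_{\rho}(0)$; combined with the previous remark this yields $G_{a,{\rm rad}}=G_{1,{\rm rad}}$ for all $a\ge1$, i.e. independence of $a$. For the explicit value I would substitute into (\ref{G_rad to HS}) the classical optimal constant of the radial critical problem on $\re^m$ (Talenti/Bliss; see \cite{T},\cite{L}), a ratio of Gamma functions in $m,N,q$. The only genuine bookkeeping is to verify that the $m$-dependent factors $\w_{m-1}$, $(\frac{N-1}{m-N})^{N-1+N/q}$ and the Talenti constant cancel, leaving the stated $m$-free expression for $G_{\rm rad}$; this cancellation is itself a useful consistency check on the computation.

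For part (iii), the whole-space problem for $a=1$ is attained by the Talenti family $u(x)=C(1+c|x|^{\gamma})^{-N/(q-N)}$ with $\gamma=\frac{(m-N)(q-N)}{N(N-1)}$. Pulling this family back through (\ref{trans}), i.e. substituting $|x|=(\log\frac{R}{|y|})^{-(N-1)/(m-N)}$ and absorbing the dilation parameter into $\la$ and the prefactor, produces exactly the stated $U_\la$: the exponent matching turns $|x|^{\gamma}$ into $(\la\log\frac{R}{|y|})^{-(q-N)/N}$, the outer power becomes $-\frac{N}{q-N}$, and the normalization of the dilation yields the factor $\la^{-(N-1)/N}$. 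For part (ii), suppose $G_{a,{\rm rad}}$ with $a>1$ were attained; by the correspondence above this would give a radial minimizer of the dilation-invariant critical quotient on the bounded ball $B^m_{\rho_a}(0)$. But such a quotient on a bounded domain whose interior contains the concentration point $0$ is never attained: the Talenti family concentrates at the origin, so its suitably cut-off restrictions to $B^m_{\rho_a}(0)$ form a minimizing sequence with no convergent subsequence (a Pohozaev/concentration argument). Hence $G_{a,{\rm rad}}$ has no radial minimizer for $a>1$.

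I expect the principal obstacle to be the careful justification of the $a>1$ modified transformation and the identification of its image as the bounded ball $B^m_{\rho_a}(0)$, because the entire attainment/non-attainment dichotomy between $a=1$ and $a>1$ rests on whether the transformed domain is all of $\re^m$ or merely a bounded ball. By contrast, the explicit-constant computation in (i), though lengthy, is mechanical once the Talenti constant is cited, and the pull-back in (iii) is a routine change of variables.
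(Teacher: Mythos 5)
For parts (i) and (iii) your route is the paper's own: the proposition is obtained there by combining the transformation (\ref{trans}) (and its $a>1$ modification from \cite{ST}) with the classical radial Hardy--Sobolev minimization on $\re^m$ \cite{T}, \cite{L}. Your bookkeeping is correct: the hypothesis $\beta=\frac{N-1}{N}q+1$ gives $\alpha=\frac{(m-N)q}{N}-m$, i.e.\ $q=\frac{N(m+\alpha)}{m-N}$ is indeed the critical exponent; the image domain is all of $\re^m$ for $a=1$ and the ball $B^m_{\rho_a}(0)$, $\rho_a=(\log a)^{-(N-1)/(m-N)}$, for $a>1$; and the pull-back of the Talenti family through $|x|=(\log\frac{R}{|y|})^{-(N-1)/(m-N)}$ does reproduce the stated $U_\la$, including the factor $\la^{-(N-1)/N}$.

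The genuine gap is in part (ii). You claim the dilation-invariant critical quotient on the bounded ball $B^m_{\rho_a}(0)$ ``is never attained'' because suitably cut-off Talenti functions concentrate at $0$ and form a minimizing sequence with no convergent subsequence. That is a non sequitur: exhibiting one non-compact minimizing sequence shows only that the relevant embedding fails to be compact, not that no minimizer exists --- indeed, in this very paper $G_a$ is attained for $a\in(1,a_*)$ even though the embedding is non-compact (Lemma \ref{emb} (ii) and Theorem \ref{Thm ball} (ii)), so non-compactness alone cannot decide attainment. To close your argument you must exploit the equality of the infima: a radial minimizer on $B^m_{\rho_a}(0)$, extended by zero, would be a minimizer of the same quotient on all of $\re^m$; by regularity theory and the strong maximum principle applied to the Euler--Lagrange equation for the $N$-Laplacian, $|u|$ would then be strictly positive on $\re^m\setminus\{0\}$, contradicting its vanishing outside the ball. (A Pohozaev identity for the weighted critical equation on the star-shaped domain would also work, but you only name it without executing it.) Note that the paper's proof of (ii) is exactly this completed idea, carried out directly in the original variables: the logarithmic dilation $u_\la(x)=\la^{-\frac{N-1}{N}}u((\frac{|x|}{aR})^{\la-1}x)$, which is the pull-back of the $\re^m$-dilation you invoke, preserves the quotient, so a minimizer would produce another minimizer vanishing identically on the annulus $B_R(0)\setminus B_{a^{1-\la^{-1}}R}(0)$, contradicting $u_\la\in C^1(B_R(0)\setminus\{0\})$ and $u_\la>0$ there, which follow from regularity \cite{D} and the strong maximum principle \cite{PS} applied to (\ref{EL}). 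So your transformed-domain route is viable and essentially equivalent, but as written the decisive non-attainment step is unproved.
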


Here, we give a simple proof of Proposition \ref{G_rad} (ii) by using a scaling argument.

\begin{proof}[{\bf Proof of Proposition \ref{G_rad} (ii)}]
Let $\beta =\frac{N-1}{N}q +1, q>N$, and $a >1$. Assume that $u \in W_{0,\text{rad}}^{1,N}(B_R(0))$ is a radial minimizer of $G_{a, {\rm rad}}$. We can assume that $u$ is nonnegative without loss of generality. 
We shall derive a contradiction. 
For $\la \in (0, 1)$, we consider a scaled function $u_\la \in W_{0,\text{rad}}^{1,N}(B_{R}(0))$ which is given by 
\begin{align*}
u_\la (x) = 
\begin{cases}
\la^{-\frac{N-1}{N}} u\( \( \frac{|x|}{aR} \)^{\la -1} x \) \quad &\text{if}\,\, x \in B_{a^{(1-\la^{-1} )} R}(0), \\
0 &\text{if}\,\, x \in B_R(0) \setminus B_{a^{(1-\la^{-1} )}R}(0).
\end{cases}
\end{align*}
Then we have
\begin{align*}
\dfrac{\int_{B_R(0)} | \nabla u |^N \,dx}{\( \int_{B_R(0)} \frac{|u|^q}{|x|^N (\log \frac{aR}{|x|})^{\beta}} dx \)^{\frac{N}{q}}} = \dfrac{\int_{B_R (0)} | \nabla u_\la |^N \,dx}{\( \int_{B_R (0)} \frac{|u_\la |^q}{|x|^N (\log \frac{aR}{|x|})^{\beta}} dx \)^{\frac{N}{q}}}
\end{align*}
which yields that $u_\la$ is also a nonnegative minimizer of $G_{a, {\rm rad}}$. 
On the other hand, we can show that $u_\la \in C^1(B_R(0) \setminus \{ 0\} )$ and $u_\la >0$ in $B_R(0) \setminus \{ 0\}$ by standard regularity argument and strong maximum principle to the Euler-Lagrange equation (\ref{EL}), see e.g. \cite{D}, \cite{PS}. 
However $u_\la \equiv 0$ in $B_R(0) \setminus B_{a^{(1-\la^{-1} )}R}(0)$. 
This is a contradiction. Therefore $G_{a, {\rm rad}}$ is not attained.
\end{proof}

%%%%%%%%%%%%%%%%%%%%%%%%%%%%%%%%%%%%%%%%%%%%%%%%%%%%%%%%%%%%%%%%%

\section{Existence and non-existence of the minimizers}\label{ball case}

Let $\Omega =B_R(0)$. 
In this section, we prove an existence and non-existence of the minimizers of $G_a$. 
First result is as follows.

\begin{theorem}\label{Thm ball}
Let $a>1$ and $q, \beta >1$ satisfy (\ref{positivity cond}). Then the followings hold.

\noindent
(i) If \,$\beta > \frac{N-1}{N}q +1$, then $G_a$ is attained.

\noindent
(ii) If $\beta = \frac{N-1}{N}q +1$ and $q > N$, then there exists $a_* \in (1, e^{\frac{\beta}{N}}]$ such that $G_a$ is attained for $a \in (1,a_*)$ and $G_a$ is not attained for $a > a_*$.
\end{theorem}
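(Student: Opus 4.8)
The plan is to treat (i) as a compactness (direct–method) case and (ii) as a threshold phenomenon governed by the radial level $G_{\rm rad}$ of Proposition \ref{G_rad}. Throughout I write
$Q_a(u):=\dfrac{\intO|\nabla u|^N\,dx}{\(\intO |u|^q |x|^{-N}\(\log\frac{aR}{|x|}\)^{-\beta}\,dx\)^{N/q}}$,
so that $G_a=\inf_{u\neq 0}Q_a(u)$ and $G_{a,{\rm rad}}=G_{\rm rad}$ on radial functions. For (i), with $\beta>\frac{N-1}{N}q+1$, I would prove that the embedding $W_0^{1,N}(B_R(0))\hookrightarrow L^q(|x|^{-N}(\log\frac{aR}{|x|})^{-\beta}dx)$ is \emph{compact} and conclude by the direct method: a minimizing sequence $u_n$ normalized to unit denominator and $\intO|\nabla u_n|^N\to G_a$ is bounded in $W_0^{1,N}$, so $u_n\rightharpoonup u$ weakly and a.e.; compactness upgrades this to strong convergence of the denominator, forcing $u\neq0$, and weak lower semicontinuity of $\intO|\nabla\cdot|^N$ gives $Q_a(u)\le G_a$. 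The compactness splits into two regions: away from the origin $f_{a,\beta}$ is bounded and Rellich--Kondrachov applies, while near the origin I would use the Moser--Trudinger embedding of $W_0^{1,N}$, whose exponential integrability renders $f_{a,\beta}|u_n|^q$ uniformly integrable on a small ball \emph{precisely because} $\beta>\frac{N-1}{N}q+1$ supplies the vanishing factor $(\log\frac{aR}{|x|})^{-(\beta-\frac{N-1}{N}q-1)}$; Vitali's theorem then excludes concentration at $0$.

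For (ii) the embedding is only critical, so concentration at the origin is possible and the proof is organized around comparison with $G_{\rm rad}$. First I record two monotonicity facts: since $f_{a,\beta}$ decreases pointwise as $a$ grows, $a\mapsto G_a$ is non-decreasing and $Q_a(u)$ is \emph{strictly} increasing in $a$ for each fixed $u\not\equiv0$. Second, $G_a\le G_{a,{\rm rad}}=G_{\rm rad}$ always, and for $a\ge e^{\beta/N}$ the radial profile of $f_{a,\beta}$ is monotone decreasing on $(0,R)$, so Schwarz symmetrization forces $G_a=G_{\rm rad}$. Third, $G_a\to0$ as $a\to1^+$: because $\beta=\frac{N-1}{N}q+1$ and $q>N$ exclude $\beta=q=N$, Proposition \ref{G_1 positive} gives $G_1=0$, so there exist $w_k\in C_c^\infty(B_R(0))$ with $Q_1(w_k)\to0$; for each fixed $w_k$ dominated convergence yields $Q_a(w_k)\to Q_1(w_k)$, whence $\limsup_{a\to1^+}G_a\le\inf_k Q_1(w_k)=0<G_{\rm rad}$. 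Setting $a_*:=\sup\{a>1:G_a<G_{\rm rad}\}$, these facts give $a_*\in(1,e^{\beta/N}]$, with $G_a<G_{\rm rad}$ for $a<a_*$ and $G_a=G_{\rm rad}$ for $a>a_*$.

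It remains to match attainment to the side of $a_*$. For $a<a_*$ (so $G_a<G_{\rm rad}$) I would run a concentration--compactness argument on a minimizing sequence: for $a>1$ the weight is bounded on $\overline{B_R(0)}\setminus\{0\}$, so the only possible defect of compactness is concentration at the origin, whose level equals the radial constant $G_{\rm rad}$; the strict inequality $G_a<G_{\rm rad}$ rules this out and produces a minimizer. For $a>a_*$ I argue non-attainment by strict monotonicity: if some $u_0$ realized $G_a=G_{\rm rad}$, choosing $a'\in(a_*,a)$ gives $G_{a'}=G_{\rm rad}$, yet $Q_{a'}(u_0)<Q_a(u_0)=G_{\rm rad}$, so $G_{a'}\le Q_{a'}(u_0)<G_{\rm rad}=G_{a'}$, a contradiction. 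This disposes of the entire range $a>a_*$ without having to discuss radial versus non-radial minimizers.

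The main obstacle is the concentration--compactness step for $a<a_*$: one must show that for the critical weight with $a>1$ the loss of compactness can occur only through concentration at the origin, and that the corresponding threshold is \emph{exactly} $G_{\rm rad}$, the value computed in Proposition \ref{G_rad}. This requires a careful localization near $0$, a Brezis--Lieb type splitting of the denominator, and identification of the rescaled limit problem — after the logarithmic change of variables (\ref{trans}) — with the extremal problem solved by the bubbles $U_\la$ of Proposition \ref{G_rad}(iii). Once the threshold is pinned at $G_{\rm rad}$, the dichotomy ``strictly below $\Rightarrow$ compact'' closes part (ii), and together with the strict-monotonicity contradiction above yields the claimed $a_*\in(1,e^{\beta/N}]$.
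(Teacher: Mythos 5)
Your proposal reproduces the paper's proof skeleton almost step for step: part (i) via compactness of the embedding $W_0^{1,N}(B_R(0))\hookrightarrow L^q(B_R(0);f_{a,\beta}dx)$ plus the direct method (the paper's Lemma \ref{emb} (i)); part (ii) via monotonicity of $a\mapsto G_a$, the identity $G_a=G_{a,{\rm rad}}=G_{\rm rad}$ for $a\ge e^{\beta/N}$ by P\'olya--Szeg\"o and Hardy--Littlewood, $G_1=0$ from Proposition \ref{G_1 positive}, a threshold $a_*$, attainment below the threshold through ``$G_a<G_{\rm rad}$ implies compactness of minimizing sequences'' (the paper's Lemma \ref{concentration level}), and non-attainment above it. Two of your local arguments genuinely differ, one for the better. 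For non-attainment at $a>a_*$ you compare $Q_{a'}(u_0)<Q_a(u_0)$ at an intermediate $a'\in(a_*,a)$; since a nontrivial minimizer is nonzero on a set of positive measure while $f_{a',\beta}>f_{a,\beta}$ pointwise on $B_R(0)\setminus\{0\}$, the strict inequality is automatic, and this bypasses the paper's detour through $C^1$ regularity and the strong maximum principle for the Euler--Lagrange equation. Similarly, your bound $\limsup_{a\to 1^+}G_a\le Q_1(w_k)$ by monotone/dominated convergence on fixed compactly supported test functions replaces the dilation argument of Lemma \ref{conti G_a}, and together with monotonicity and the definition $a_*:=\sup\{a>1: G_a<G_{\rm rad}\}$ it renders the continuity of $G_a$ unnecessary (you should still record, via Fatou, that $G_1=0$ can be realized along $C_c^\infty$ functions, which is routine).

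Two caveats. First, in (i) your near-origin mechanism ``Moser--Trudinger plus Vitali'' is under-justified as stated: $f_{a,\beta}\notin L^p(B_\delta(0))$ for any $p>1$, so one cannot simply H\"older exponential integrability against the weight. To make it work one must split by the level sets where $|u_n|^{N/(N-1)}$ is larger or smaller than $t\log\frac{aR}{|x|}$, using that $\beta>\frac{N-1}{N}q+1$ makes $\int_{B_\delta(0)}|x|^{-N}(\log\frac{aR}{|x|})^{\frac{N-1}{N}q-\beta}\,dx$ small, or, as the paper does, invoke the borderline inequality of Theorem A at an intermediate exponent $\alpha\in(\frac{N-1}{N}q+1,\beta)$ and exploit the uniformly vanishing factor $(\log\frac{aR}{|x|})^{\alpha-\beta}$ near $0$; the latter is the cleaner route. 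Second, the step you single out as the main obstacle --- pinning the concentration threshold at $G_{\rm rad}$ --- is executed by Lemma \ref{concentration level} more simply than your proposed identification of rescaled limits with the bubbles $U_\la$: after a Brezis--Lieb splitting reduces matters to $u_m\rightharpoonup 0$ with the $L^q(f_{a,\beta})$ mass concentrating at $0$, a cutoff supported in $B_{\ep R}(0)$ and the dilation $y=x/\ep$ transform the weight $f_{a,\beta}$ on $B_{\ep R}(0)$ into $f_{a\ep^{-1},\beta}$ on $B_R(0)$ (the $N$-Dirichlet energy being dilation invariant), and for small $\ep$ one has $a\ep^{-1}\ge e^{\beta/N}$, where $G_{a\ep^{-1}}=G_{\rm rad}$ is already known by symmetrization; this yields $G_{\rm rad}\le G_a$ directly, with no logarithmic change of variables and no profile decomposition. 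So your outline is correct and essentially the paper's, but this lemma is where the actual work lies, and the dilation trick is the ingredient your sketch is missing.
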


\begin{remark}
If $a_* = e^{\frac{\beta}{N}}$, then we can show that $G_{a_*}$ is not attained. In fact, if we assume that $G_{a_*}$ is attained by $u$, then $u^{\#}$ is a radial minimizer of $G_{a,{\rm rad}}$ which contradicts Proposition \ref{G_rad} (ii), see the proof of Theorem \ref{Thm ball} (ii). 
However we do not know the value of $a_*$.
\end{remark}

In order to show Theorem \ref{Thm ball}, we need three lemmas. 
First we show the (non-)compactness of the embedding $W_0^{1,N}(B_R(0)) \hookrightarrow L^q(B_R(0); f_{a, \, \beta}(x) dx)$, where $f_{a, \, \beta}(x)=|x|^{-N} \( \log \frac{aR}{|x|} \)^{-\beta}$. %A proof of Lemma \ref{emb} in balls is completely same as it in bounded domains. Thus we give a proof of Lemma \ref{emb} in bounded domains. 

\begin{lemma}\label{emb}
Let $a>1$ and $q, \beta >1$ satisfy (\ref{positivity cond}). Then the continuous embedding $W_0^{1,N}(B_R(0)) \hookrightarrow L^q(B_R(0); f_{a, \, \beta}(x) dx)$ is 

\noindent
(i) compact if \,$\beta > \frac{N-1}{N}q +1$,

\noindent
(ii) non-compact if \,$\beta = \frac{N-1}{N}q +1$ and $q \ge N$.
\end{lemma}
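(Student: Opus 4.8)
The plan is to treat the two parts by complementary methods. For (i) I would prove compactness by the standard device of splitting $B_R(0)$ into a small ball $B_\delta(0)$ around the singularity and the annulus $B_R(0)\setminus B_\delta(0)$: on the annulus the weight is bounded and Rellich--Kondrachov applies, while near the origin I would show the weighted mass is arbitrarily small uniformly along the sequence. For (ii) I would instead exhibit an explicit bounded sequence, built from the scale invariance of the critical quotient, that converges weakly to $0$ yet keeps its weighted $L^q$-mass bounded away from $0$; here $q\ge N$ guarantees, via Proposition \ref{GH_N positive}, that the embedding is indeed continuous so that non-compactness is the right statement.

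For part (i), let $u_n \rightharpoonup u$ weakly in $W_0^{1,N}(B_R(0))$, so $\|\nabla u_n\|_{L^N}\le M$. The heart of the matter is a uniform tail estimate. Since $\beta > \frac{N-1}{N}q+1$, I would fix an intermediate exponent $\beta_1$ with $\frac{N-1}{N}q+1 < \beta_1 < \beta$; by Proposition \ref{GH_N positive} the embedding is continuous at $\beta_1$, so $\int_{B_R(0)}|w|^q f_{a,\beta_1}\,dx \le C\|\nabla w\|_{L^N}^q$. Writing $f_{a,\beta}=f_{a,\beta_1}\,(\log\frac{aR}{|x|})^{-(\beta-\beta_1)}$ and using $\log\frac{aR}{|x|}\ge\log\frac{aR}{\delta}$ on $B_\delta(0)$, I get
$$\int_{B_\delta(0)} |w|^q f_{a,\beta}\,dx \le \Big(\log\tfrac{aR}{\delta}\Big)^{-(\beta-\beta_1)}\int_{B_R(0)}|w|^q f_{a,\beta_1}\,dx \le C\Big(\log\tfrac{aR}{\delta}\Big)^{-(\beta-\beta_1)}\|\nabla w\|_{L^N}^q,$$
which tends to $0$ as $\delta\to 0$, uniformly over $\{w:\|\nabla w\|_{L^N}\le M\}$. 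On the annulus $f_{a,\beta}$ is bounded (here $a>1$ is essential, so $\log\frac{aR}{|x|}\ge\log a>0$ up to the boundary), and the compact embedding $W_0^{1,N}(B_R(0))\hookrightarrow L^q(B_R(0))$ gives $u_n\to u$ in $L^q$. Splitting $\int_{B_R(0)}|u_n-u|^q f_{a,\beta}$ over $B_\delta(0)$ and its complement then yields strong convergence in the weighted space.

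For part (ii), I would reuse the scaling $u_\la$ from the proof of Proposition \ref{G_rad} (ii). A direct computation via the logarithmic change of variables $s=r^\la(aR)^{1-\la}$, which sends $\frac{dr}{r}$ to $\frac{1}{\la}\frac{ds}{s}$ and $\log\frac{aR}{r}$ to $\frac{1}{\la}\log\frac{aR}{s}$, shows that \emph{precisely because} $\beta=\frac{N-1}{N}q+1$ both $\int_{B_R(0)}|\nabla u_\la|^N\,dx$ and $\int_{B_R(0)}|u_\la|^q f_{a,\beta}\,dx$ are independent of $\la$. Fixing any nonzero radial $u\in C_c^\infty(B_R(0))$ (both integrals are finite, the weighted one since $\beta>1$) and taking $\la_n\to 0^+$ gives a sequence bounded in $W_0^{1,N}(B_R(0))$ whose weighted $L^q$-integral is a fixed positive constant. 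Since $u_{\la_n}$ is supported in $B_{a^{(1-\la_n^{-1})}R}(0)$ with $a^{(1-\la_n^{-1})}\to 0$, any weak limit is the strong $L^q$-limit by Rellich--Kondrachov and vanishes a.e.\ outside every ball around $0$; hence $u_{\la_n}\rightharpoonup 0$. Were the embedding compact, $\int_{B_R(0)}|u_{\la_n}|^q f_{a,\beta}\,dx\to 0$ would follow, contradicting its constancy, so the embedding is non-compact.

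The main obstacle is the uniform tail control in part (i): away from the origin everything is routine Rellich--Kondrachov, so the entire weight of the argument rests on showing that the critically singular part near $0$ contributes arbitrarily little, uniformly along the sequence. The strict inequality $\beta>\frac{N-1}{N}q+1$ is exactly what produces the decaying logarithmic factor that makes this possible. In part (ii) the corresponding difficulty is merely recognizing that the borderline exponent renders the critical quotient scale-invariant, after which the counterexample is immediate.
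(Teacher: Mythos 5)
Your proof is correct and follows essentially the same route as the paper: part (i) uses the identical intermediate-exponent splitting (your $\beta_1$ playing the role of the paper's $\alpha$ with $\frac{N-1}{N}q+1<\alpha<\beta$) near the origin combined with Rellich--Kondrachov away from it, and part (ii) uses the very same scale-invariant family $u_\lambda$ from the proof of Proposition \ref{G_rad} (ii) with $\lambda\to 0^+$. The only difference is presentational: you spell out the invariance computation and the weak convergence $u_{\lambda_n}\rightharpoonup 0$, which the paper leaves implicit in a single sentence.
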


\begin{proof}[{\bf Proof of Lemma \ref{emb}}]
\noindent
(i) It is proved in \cite{ST(EJDE)}. However we give a proof here for the convenience of readers. 
Let $(u_m)_{m=1}^{\infty} \subset W_0^{1,N}(B_R(0))$ be a bounded sequence. Then there exists a subsequence $(u_{m_k})_{k=1}^{\infty}$ such that
\begin{align}\label{L^r}
&u_{m_k} \rightharpoonup u \,\, \text{in} \,\, W_0^{1,N}(B_R(0) ), \nonumber \\
&u_{m_k} \to u \,\, \text{in} \,\, L^r(B_R(0) ) \quad \text{for any} \,\, r \in [1, \infty ).
\end{align}
Let $\alpha$ satisfy $\frac{N-1}{N}q +1 < \alpha < \beta$. For all $\ep >0$, there exists $\delta >0$ such that
\begin{equation}\label{log}
\( \log \frac{aR}{|x|} \)^{\alpha -\beta} < \ep \quad \text{for all} \,\, x \in B_\delta (0).
\end{equation}
From (\ref{L^r}) and (\ref{log}), we have
\begin{align*}
\int_{B_R(0)} \frac{|u_{m_k}-u|^q}{ |x|^N (\log \frac{aR}{|x|} )^{\beta}} dx
&\le \ep \int_{B_\delta (0)} \frac{|u_{m_k}-u|^q}{ |x|^N ( \log \frac{aR}{|x|} )^{\alpha}} dx + \delta^{-N} \( \log \frac{aR}{\delta}\)^{-\beta} \| u_{m_k} -u \|^q_{L^q(B_R(0) )} \\
&\le \ep  C  \| \nabla (u_{m_k} -u) \|^q_{L^N(B_R(0) )} + C \| u_{m_k} -u \|^q_{L^q(B_R(0) )} \\
&\le C \ep + C \| u_{m_k} -u \|^q_{L^q(B_R(0) )} \to 0 \quad \text{as} \,\, \ep \to 0, k \to \infty.
\end{align*}
Thus the continuous embedding $W_0^{1,N}(B_R(0)) \hookrightarrow L^q(B_R(0); f_{a, \, \beta}(x) dx)$ is compact if \,$\beta > \frac{N-1}{N}q +1$.

\noindent
(ii) We can see a non-compact sequence $(u_{\frac{1}{m}})_{m=1}^\infty$ in $W_0^{1,N}(B_R(0))$, where for $\la \in (0, 1]$ $u_\la$ is defined in the proof of Proposition \ref{G_rad} (ii). Hence the continuous embedding $W_0^{1,N}(B_R(0)) \hookrightarrow L^q(B_R(0); f_{a, \, \beta}(x) dx)$ is non-compact if $\beta =\frac{N-1}{N}q +1$ and $q \ge N$.
\end{proof}

In \cite{HK}, a continuity of $G_a$ with respect to $a$ is proved for $a \in (1, \infty)$. However, in our argument, the continuity of $G_a$ at $a=1$ is needed. 

\begin{lemma}\label{conti G_a}
$G_a$ is monotone increasing and continuous with respect to $a \in [1, \infty)$.
\end{lemma}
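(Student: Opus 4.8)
The plan is to split the statement into three parts: monotonicity on $[1,\infty)$, continuity on the open ray $(1,\infty)$, and continuity at the endpoint $a=1$, which is the only genuinely delicate point. Throughout I would write $D_a(u) := \intO \frac{|u|^q}{|x|^N (\log \frac{aR}{|x|})^{\beta}}\,dx$ and $Q_a(u) := \frac{\intO |\nabla u|^N dx}{D_a(u)^{N/q}}$, so that $G_a = \inf_{u \neq 0} Q_a(u)$.

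For monotonicity, I would use that for each fixed $u$ and each $x \in \Omega \setminus \{0\}$ one has $\log \frac{aR}{|x|} = \log \frac{R}{|x|} + \log a$, which is positive and strictly increasing in $a \ge 1$. Since $\beta > 1 > 0$, the integrand defining $D_a(u)$ is pointwise non-increasing in $a$, hence $D_a(u)$ is non-increasing and $Q_a(u)$ is non-decreasing in $a$. Taking the infimum over $u$ preserves the ordering, so $a \mapsto G_a$ is non-decreasing.

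For continuity on $(1,\infty)$ I would establish a two-sided comparison of the denominators. Fixing $1 < a_1 \le a_2$ and setting $s = \log \frac{R}{|x|} \ge 0$, the elementary bound $\frac{s + \log a_2}{s + \log a_1} \le \frac{\log a_2}{\log a_1}$ (valid since $s \ge 0$ and $\log a_1 > 0$) gives pointwise $(\log \frac{a_1 R}{|x|})^{-\beta} \le (\frac{\log a_2}{\log a_1})^{\beta}(\log \frac{a_2 R}{|x|})^{-\beta}$, and together with monotonicity this yields $D_{a_2}(u) \le D_{a_1}(u) \le (\frac{\log a_2}{\log a_1})^{\beta} D_{a_2}(u)$. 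Inserting this into $Q_a(u)$ and taking the infimum produces the squeeze
\begin{align*}
G_{a_1} \le G_{a_2} \le \( \frac{\log a_2}{\log a_1} \)^{\frac{\beta N}{q}} G_{a_1}, \qquad 1 < a_1 \le a_2 .
\end{align*}
Because the factor tends to $1$ as $a_2 \to a_1$ and $G_{a_1}$ is finite (any $u \in C_c^{\infty}(\Omega \setminus \{0\})$ gives $Q_{a_1}(u) < \infty$), this forces continuity at every $a_0 > 1$; alternatively one may simply invoke the continuity already proved in \cite{HK}.

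The hard part is continuity at $a = 1$, where the comparison above degenerates as $\log a_1 \to 0$ and, more seriously, the weight acquires a singularity on $\{|x| = R\} \subset \pd \Omega$, so that $D_1(u)$ may be infinite. Here I would argue by monotone convergence: as $a \downarrow 1$ the integrands increase pointwise to $\frac{|u|^q}{|x|^N}(\log \frac{R}{|x|})^{-\beta}$, hence $D_a(u) \uparrow D_1(u) \in (0,\infty]$ and $Q_a(u) \to Q_1(u)$ for every fixed $u$, where in the case $D_1(u) = \infty$ one has $Q_1(u) = 0$, exactly the value assigned by the definition of $G_1$. Since $G_a \le Q_a(u)$ for all $a$, letting $a \downarrow 1$ gives $\limsup_{a \downarrow 1} G_a \le Q_1(u)$ for each $u$, and taking the infimum over $u$ yields $\limsup_{a \downarrow 1} G_a \le G_1$. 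Combined with $G_1 \le G_a$ from monotonicity, this gives $\lim_{a \downarrow 1} G_a = G_1$. The crucial robustness of this last step is that passing to the infimum tolerates the possibility $D_1(u) = \infty$, which is precisely what the boundary singularity forces and what makes the direct estimate of the previous paragraph break down at the endpoint.
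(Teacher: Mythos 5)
Your proof is correct, but at the one genuinely delicate point --- continuity at $a=1$ --- it runs on a different engine than the paper's. The paper's argument is a scaling argument tailored to $\Omega=B_R(0)$: it picks a minimizing sequence $(u_m)\subset C_c^{\infty}(B_R(0))$ for $G_1$ with ${\rm supp}\, u_m\subset B_{R_m}(0)$, $R_m\nearrow R$, and applies the dilation $y=\frac{R}{R_m}x$, $v(y)=u_m(x)$, which leaves $\int|\nabla\cdot|^N$ invariant (conformal exponent $N$) and converts the weight $f_{1,\beta}$ \emph{exactly} into $f_{a_m,\beta}$ with $a_m=\frac{R}{R_m}\searrow 1$; this gives $G_{a_m}\le G_1+o(1)$, and monotonicity closes the squeeze. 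You instead fix a single competitor $u$, use monotone convergence to get $D_a(u)\uparrow D_1(u)\in(0,\infty]$, hence $Q_a(u)\downarrow Q_1(u)$, and then exchange this limit with the infimum; together with monotonicity this yields $\lim_{a\searrow 1}G_a=G_1$. Your route is more elementary and strictly more general: it uses no dilation structure, no choice of compactly supported minimizing sequence, and it works verbatim on an arbitrary bounded domain, whereas the paper's dilation needs the ball so that the rescaled domain is again $\Omega$ and the weight transforms exactly (this is the same scaling device the paper reuses in Proposition \ref{G_rad} (ii) and Lemma \ref{emb} (ii), which is presumably why it was chosen). What the paper's version buys is that all its test functions have compact support, so every denominator is finite and no convention about infinite denominators is ever invoked; your limit exchange, by contrast, silently treats competitors with $D_1(u)=\infty$ as admissible with quotient $0$. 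That reading is consistent with how the paper itself uses the definition (the proof of Proposition \ref{G_1 positive general} concludes $G_1=0$ from exhibiting one such $u$), but you should state it explicitly, since under the stricter convention restricting the infimum to finite-denominator competitors you would need a one-line truncation argument producing admissible functions whose quotients tend to $Q_1(u)$. Finally, your quantitative two-sided bound $G_{a_1}\le G_{a_2}\le \left(\frac{\log a_2}{\log a_1}\right)^{\frac{\beta N}{q}}G_{a_1}$ for $1<a_1\le a_2$ is a nice self-contained replacement for the paper's citation of \cite{HK} on $(1,\infty)$, and your monotonicity argument coincides with the paper's (pointwise monotonicity of the weight in $a$).
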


\begin{proof}[Proof of Lemma \ref{conti G_a}]
It is enough to show only the continuity of $G_a$ at $a=1$. From the definition of $G_1$, we can take $(u_m)_{m=1}^\infty \subset C_c^{\infty}(B_R(0))$ and $R_m < R$ for any $m$ such that supp $u_m \subset B_{R_m}(0), R_m \nearrow R$, and
\begin{align*}
\dfrac{\int_{B_{R_m}(0)} | \nabla u_m |^N \,dx}{\( \int_{B_{R_m}(0)} |u_m|^q f_{1,\,\beta}(x)\, dx \)^{\frac{N}{q}}} = G_1 + o(1) \quad {\rm as }\,\, m \to \infty.
\end{align*}
Set $v(y)=u_m(x)$, where $y=\frac{R}{R_m}x$. Then 
\begin{align*}
\dfrac{\int_{B_{R_m}(0)} | \nabla u_m |^N \,dx}{\( \int_{B_{R_m}(0)} |u_m|^q f_{1,\,\beta}(x)\, dx \)^{\frac{N}{q}}} 
= \dfrac{\int_{B_R(0)} | \nabla v |^N \,dx}{\( \int_{B_R(0)} |v|^q f_{a_m,\,\beta}(x)\, dx \)^{\frac{N}{q}}} \ge G_{a_m},
\end{align*}
where $a_m = \frac{R}{R_m} \searrow 1$ as $m \to \infty$. Therefore we have $G_{a_m} \le G_1 + o(1)$. Since $f_{a_m, \beta}(x) \le f_{1, \beta}(x)$ for any $x \in B_R(0)$, we have $G_1 \le G_{a_m}$. Hence we see that $\lim_{a \searrow 1} G_a =G_1$.
\end{proof}

Third Lemma is concerned with the concentration level of minimizing sequences of $G_a$.

\begin{lemma}\label{concentration level}
Let $\beta =\frac{N-1}{N}q +1, q>N$, and $a>1$. 
If $G_a < G_{\rm rad}$, then $G_a$ is attained, where $G_{\rm rad}$ is given by Proposition \ref{G_rad} (i).
\end{lemma}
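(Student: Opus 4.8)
The plan is to run a concentration--compactness argument for a normalized minimizing sequence, the only possible loss of compactness being concentration at the origin, whose energy cost I claim is governed precisely by the radial constant $G_{\rm rad}$. First I would fix a minimizing sequence $(u_m) \subset W_0^{1,N}(B_R(0))$ with $\int_{B_R(0)} |u_m|^q f_{a,\beta}\,dx = 1$ and $\int_{B_R(0)} |\nabla u_m|^N\,dx = G_a + o(1)$; replacing $u_m$ by $|u_m|$ I may assume $u_m \ge 0$. Since $(u_m)$ is bounded in $W_0^{1,N}(B_R(0))$, after passing to a subsequence $u_m \rightharpoonup u$ weakly, $u_m \to u$ strongly in every $L^r(B_R(0))$ with $r \in [1,\infty)$, and $u_m \to u$ a.e. Because $a>1$, the weight $f_{a,\beta}$ stays bounded away from its singularity on each annulus $B_R(0)\setminus B_\rho(0)$, where it is comparable to $dx$; hence $\int_{B_R(0)\setminus B_\rho(0)} |u_m - u|^q f_{a,\beta}\,dx \to 0$ for every $\rho>0$, so any loss of mass in the weighted norm can only concentrate at the origin.

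Next I would split both functionals by the Brezis--Lieb lemma. Writing $v_m = u_m - u$, one obtains $\int |\nabla u_m|^N = \int|\nabla u|^N + \int |\nabla v_m|^N + o(1)$ and, in $L^q(f_{a,\beta}\,dx)$, $1 = \int|u|^q f_{a,\beta} + \int|v_m|^q f_{a,\beta} + o(1)$. Setting $\theta := \int_{B_R(0)} |u|^q f_{a,\beta}\,dx \in [0,1]$, the concentrated mass equals $1-\theta$, and the definition of $G_a$ gives $\int|\nabla u|^N \ge G_a\,\theta^{\frac{N}{q}}$.

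The crux is to show that the concentrating remainder obeys $\liminf_m \int|\nabla v_m|^N \ge G_{\rm rad}\,(1-\theta)^{\frac{N}{q}}$. In the logarithmic--radial coordinate $t = \log(aR/|x|)$ on the cylinder $(\log a,\infty)\times \mathbb{S}^{N-1}$ the two functionals become, up to the change of variables, $\int (|\partial_t w|^2 + |\nabla_\sigma w|^2)^{\frac{N}{2}}\,dt\,d\sigma$ and $\int |w|^q\, t^{-\beta}\,dt\,d\sigma$, and since $\beta = \frac{N-1}{N}q+1$ the non-compactness is produced by the dilation in $t$ underlying the family $u_\lambda$ from the proof of Proposition \ref{G_rad}(ii). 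Under that dilation the angular term $|\nabla_\sigma w|^2$ is integrated over an expanding $t$-interval, so any angular oscillation carries diverging energy; a blow-up/profile analysis of $v_m$ therefore forces the concentrating bubble to be radial, and its energy-to-mass ratio is exactly the radial infimum $G_{\rm rad}$ of Proposition \ref{G_rad}(i). I expect this step---rigorously establishing that concentration at the origin occurs only along radial profiles and at level $G_{\rm rad}$---to be the main obstacle; it is precisely where $a>1$ (no boundary singularity, by Lemma \ref{emb}) and $q>N$ enter.

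Finally I would combine the estimates: passing to the limit yields
\begin{equation*}
G_a \ge G_a\,\theta^{\frac{N}{q}} + G_{\rm rad}\,(1-\theta)^{\frac{N}{q}}.
\end{equation*}
Since $q>N$, the map $s \mapsto s^{\frac{N}{q}}$ is strictly concave and $\theta^{\frac{N}{q}} + (1-\theta)^{\frac{N}{q}} > 1$ for $0<\theta<1$, while $G_{\rm rad} > G_a$ by hypothesis. If $0<\theta<1$ the right-hand side strictly exceeds $G_a$, and if $\theta = 0$ it equals $G_{\rm rad} > G_a$; both contradict the inequality. Hence $\theta = 1$, i.e. $u_m \to u$ strongly in $L^q(f_{a,\beta}\,dx)$ with $\int|u|^q f_{a,\beta} = 1$, and by weak lower semicontinuity $\int|\nabla u|^N \le \liminf_m \int|\nabla u_m|^N = G_a$. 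Therefore $u$ attains $G_a$.
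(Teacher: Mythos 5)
Your overall skeleton (normalized minimizing sequence, Brezis--Lieb splitting of both functionals, strict superadditivity of $s \mapsto s^{\frac{N}{q}}$ for $q>N$, contradiction with the hypothesis $G_a < G_{\rm rad}$) is sound and matches the paper's strategy in spirit, but the crux step is a genuine gap: the lower bound $\liminf_m \int |\nabla v_m|^N\,dx \ge G_{\rm rad}\,(1-\theta)^{\frac{N}{q}}$ is only asserted, supported by the heuristic that in cylinder coordinates angular oscillations of a concentrating profile carry diverging energy. That heuristic concerns one specific one-parameter dilation (the family $u_\la$ from the proof of Proposition~\ref{G_rad} (ii)); an arbitrary remainder $v_m$ concentrating at the origin need not follow that dilation, and extracting a profile decomposition adapted to this degenerate logarithmic scaling --- let alone proving that the bubbles are radial --- is precisely the entire difficulty, which you acknowledge but do not carry out. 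Indeed, radiality of the bubble is more than is needed and is not obviously true for a general minimizing sequence; the only input your concavity argument requires is the energy lower bound, and without a proof of it the final step has nothing to feed on.

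The paper closes exactly this gap with an elementary cut-off-and-rescale trick that avoids any blow-up analysis. First, its version of the Brezis--Lieb step applies the definition of $G_a$ to \emph{both} $u$ and $u_m-u$ and uses the equality condition in $(A+B)^{\frac{N}{q}} \le A^{\frac{N}{q}}+B^{\frac{N}{q}}$ to conclude a dichotomy: either $u_m \to u \nequiv 0$ strongly in $L^q(B_R(0); f_{a,\beta}\,dx)$, or $u \equiv 0$; so only your case $\theta=0$ ever needs treating. In that case, since $u_m \to 0$ in every $L^r$ and $f_{a,\beta}$ is bounded away from the origin, all the weighted mass sits in $B_{\frac{\ep R}{2}}(0)$; one multiplies by a cut-off $\phi_\ep$ supported in $B_{\ep R}(0)$ and rescales $y = x/\ep$. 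The $N$-Dirichlet energy is dilation invariant in dimension $N$, while the weight transforms as $f_{a,\beta}(x)\,dx \mapsto f_{a\ep^{-1},\beta}(y)\,dy$; for small $\ep$ one has $a\ep^{-1} \ge e^{\frac{\beta}{N}}$, so the rescaled weight is radially decreasing, and the P\'olya-Szeg\"o and Hardy-Littlewood inequalities give $G_{a\ep^{-1}} = G_{a\ep^{-1},{\rm rad}} = G_{\rm rad}$ (the $a$-independence of the radial constant being Proposition~\ref{G_rad} (i)). The cut-off cross terms vanish as $m \to \infty$ because $u_m \to 0$ in $L^N$, yielding $1 \le G_{\rm rad}^{-1}\int_{B_R(0)}|\nabla u_m|^N\,dx + o(1)$, i.e. $G_{\rm rad} \le G_a$, the desired contradiction. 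The same computation applied to $v_m$ would in fact prove your general-$\theta$ claim, but it is unnecessary. So: replace the blow-up/radiality heuristic by this rescaling argument (or supply a full profile decomposition) and your proof stands; as written, it is incomplete at its central point.
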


It is easy to show Theorem \ref{Thm ball} by these three lemmas. 
Therefore we give a proof of Theorem \ref{Thm ball} before showing Lemma \ref{concentration level}.

\begin{proof}[Proof of Theorem \ref{Thm ball}]
(i) This is proved by Lemma \ref{emb} (i). We omit the proof here.

\noindent
(ii) Let $\beta =\frac{N-1}{N}q +1$ and $q>N$. When $a \ge e^{\frac{\beta}{N}}$, the potential function $f_{a, \, \beta}$ is radially decreasing. Thus the P\'olya-Szeg\"o inequality and the Hardy-Littlewood inequality imply that
\begin{align*}
\dfrac{\int_{B_R(0)} | \nabla u |^N \,dx}{\( \int_{B_R(0)} |u|^q f_{a,\,\beta}(x)\, dx \)^{\frac{N}{q}}} \ge \dfrac{\int_{B_R(0)} | \nabla u^{\#} |^N \,dx}{\( \int_{B_R(0)} |u^{\#}|^q f_{a,\,\beta}(x)\, dx \)^{\frac{N}{q}}} \ge G_{a, {\rm rad}}
\end{align*}
for any $u \in W_0^{1,N}(B_R(0))$ and $a \ge e^{\frac{\beta}{N}}$. Therefore $G_a = G_{a, {\rm rad}} =G_{\rm rad}$ for any $a \ge e^{\frac{\beta}{N}}$. Moreover we see $G_1 =0$ by Proposition \ref{G_1 positive}. Since $G_a$ is continuous and monotone increasing with respect to $a \in [1, \infty)$ by Lemma \ref{conti G_a}, there exists $a_* \in (1, e^{\frac{\beta}{N}}]$ such that $G_a < G_{\rm rad}$ for $a \in [1, a_*)$ and $G_a = G_{\rm rad}$ for $a \in [a_*, \infty)$. 
Hence $G_a$ is attained for $a \in (1, a_*)$ by Lemma \ref{concentration level}. 
On the other hand, if we assume that there exists a nonnegative minimizer $u$ of $G_a$ for $a > a_*$, then we can show that $u \in C^1(B_R(0) \setminus \{ 0\})$ and $u >0$ in $B_R(0) \setminus \{ 0\}$ by standard regularity argument and strong maximum principle to the Euler-Lagrange equation (\ref{EL}), see e.g. \cite{D}, \cite{PS}. 
Therefore we see that
\begin{align*}
G_{\rm rad} = G_a = \dfrac{\int_{B_R(0)} | \nabla u |^N \,dx}{\( \int_{B_R(0)} |u|^q f_{a,\,\beta}(x)\, dx \)^{\frac{N}{q}}} 
> \dfrac{\int_{B_R(0)} | \nabla u |^N \,dx}{\( \int_{B_R(0)} |u|^q f_{a_*,\,\beta}(x)\, dx \)^{\frac{N}{q}}} \ge G_{\rm rad}.
\end{align*}
This is a contradiction. Therefore $G_a$ is not attained for $a > a_*$.
\end{proof}

Finally, we prove Lemma \ref{concentration level}.

\begin{proof}[Proof of Lemma \ref{concentration level}]
Take a minimizing sequence $(u_m)_{m=1}^{\infty} \subset W_0^{1,N}(B_R(0))$ of $G_a$. Without loss of generality, we can assume that
\begin{align*}
\int_{B_R(0)} |u_m|^q f_{a,\beta}(x) \,dx =1,\quad \int_{B_R(0)} |\nabla u_m|^N \,dx = G_a + o(1) \,\,{\rm as}\,\, m \to \infty.
\end{align*}
Since $(u_m)$ is bounded in $W_0^{1,N}(B_R(0))$, passing to a subsequence if necessary, $u_m \rightharpoonup u$ in $W_0^{1,N}(B_R(0))$.
Then by Brezis-Lieb lemma, we have
\begin{align*}
G_a &= \int_{B_R(0)} |\nabla u_m|^N \,dx + o(1)\\
&= \int_{B_R(0)} |\nabla (u_m -u)|^N \,dx + \int_{B_R(0)} |\nabla u|^N \,dx +o(1) \\
&\ge G_a \( \int_{B_R(0)} |u_m-u|^q f_{a,\,\beta}(x)\, dx \)^{\frac{N}{q}} + G_a \( \int_{B_R(0)} |u|^q f_{a,\,\beta}(x)\, dx \)^{\frac{N}{q}} +o(1) \\
&\ge G_a \( \int_{B_R(0)} \( |u_m-u|^q + |u|^q \) f_{a,\,\beta} (x) \,dx \)^{\frac{N}{q}} +o(1) \\
&= G_a \( \int_{B_R(0)} |u_m|^q f_{a,\,\beta}(x)\, dx \)^{\frac{N}{q}} +o(1) =G_a
\end{align*}
which implies that either $u \equiv 0$ or $u_m \to u \nequiv 0$ in $L^q(B_R(0); f_{a, \, \beta}(x) dx)$ holds true from the equality condition of the last inequality. 
We shall show that $u \nequiv 0$. Assume that $u \equiv 0$. 
Then we claim that
\begin{align}\label{concentrate}
G_{\rm rad} \le \int_{B_R(0)} |\nabla u_m|^N \,dx +o(1).
\end{align}
If the claim (\ref{concentrate}) is true, then we see that $G_{\rm rad} \le G_a$ which contradicts the assumption. Therefore $u \nequiv 0$ which implies that $u_m \to u \nequiv 0$ in $L^q(B_R(0); f_{a, \, \beta}(x) dx)$. 
Hence we have 
\begin{align*}
1= \int_{B_R(0)} |u|^q f_{a,\,\beta}(x)\, dx, \quad \int_{B_R(0)} |\nabla u|^N \,dx \le \liminf_{m \to \infty} \int_{B_R(0)} |\nabla u_m|^N \,dx = G_a.
\end{align*}
Thus we can show that $u$ is a minimizer of $G_a$. 
We shall show the claim (\ref{concentrate}). 
Since $u_m \to 0$ in $L^r(B_R(0))$ for any $r \in [1, \infty)$ and the potential function $f_{a, \,\beta}$ is bounded away from the origin, for any small $\ep > 0$ we have
\begin{align*}
1= \int_{B_R(0)} |u_m|^q f_{a,\,\beta}(x)\, dx = \int_{B_{\frac{\ep R}{2}}(0)} |u_m|^q f_{a,\,\beta}(x)\, dx +o(1).
\end{align*}
Let $\phi_\ep$ be a smooth cut-off function which satisfies the followings:
\begin{align*}
0 \le \phi_\ep \le 1, \,\,\phi_\ep \equiv 1 \,\,{\rm on}\,\,B_{\frac{\ep R}{2}}(0),\,\,{\rm supp}\, \phi_\ep \subset B_{\ep R} (0), \,\,|\nabla \phi_\ep | \le C\ep^{-1}. 
\end{align*}
Set $\tilde{u_m}(y)=u_m(x)$ and $\tilde{\phi_\ep}(y)=\phi_\ep (x)$, where $y=\frac{x}{\ep}$. Then we have
\begin{align*}
1&= \( \int_{B_{\frac{\ep R}{2}}(0)} |u_m|^q f_{a,\,\beta}(x)\, dx \)^{\frac{N}{q}} +o(1) \\
&\le \( \int_{B_{\ep R} (0)} |u_m \phi_\ep |^q f_{a,\,\beta}(x)\, dx \)^{\frac{N}{q}} +o(1) \\
&=\( \int_{B_R (0)} |\tilde{u_m} \tilde{\phi_\ep} |^q f_{a\ep^{-1},\,\beta}(x)\, dx \)^{\frac{N}{q}} +o(1) 
\le G_{a\ep^{-1}}^{-1} \int_{B_R(0)} |\nabla (\tilde{u_m} \tilde{\phi_\ep } ) |^N \,dx +o(1).
\end{align*}
We see that $a \ep^{-1} \ge e^{\frac{\beta}{N}}$ for small $\ep$. Since $G_{a\ep^{-1}} =G_{a,\,{\rm rad}}=G_{\rm rad}$ from the proof of Theorem \ref{Thm ball} (ii), we have
\begin{align*}
1&\le G_{\rm rad}^{-1} \int_{B_R(0)} |\nabla (\tilde{u_m} \tilde{\phi_\ep } ) |^N \,dx +o(1) \\
&\le G_{\rm rad}^{-1} \( \int_{B_{\ep R}(0)} |\nabla u_m |^N \,dx + N \int_{B_{\ep R}(0) \setminus B_{\frac{\ep R}{2}}(0)} |\nabla u_m |^{N-2} \nabla u_m  \cdot \nabla \phi_\ep u_m \phi_\ep^{N-1} \,dx  \) +o(1) \\
&\le G_{\rm rad}^{-1} \( \int_{B_{\ep R}(0)} |\nabla u_m |^N \,dx + NC\ep^{-1} \| \nabla u_m \|_{L^N}^{N-1} \| u_m \|_{L^N} \) + o(1)\\
&\le G_{\rm rad}^{-1} \int_{B_{\ep R}(0)} |\nabla u_m |^N \,dx + o(1) 
\le G_{\rm rad}^{-1} \int_{B_R(0)} |\nabla u_m |^N \,dx +o(1).
\end{align*}
Therefore we obtain the claim (\ref{concentrate}). The proof of Lemma \ref{concentration level} is now complete.
\end{proof}

\section{In the case of general bounded domain}\label{bdd case}

We extend Theorem \ref{Thm ball} and Proposition \ref{G_1 positive} to bounded domains. 
Throughout this section, we assume that $\Omega \subset \re^N$ is a bounded domain, $0 \in \Omega$, and $\beta$ and $q$ satisfy (\ref{positivity cond}). 
Set $R=\sup_{x \in \Omega} |x|$. 

First %we consider $G_1$ for several bounded domains. 
%Namely, 
we extend Proposition \ref{G_1 positive} to general bounded domains. 
If there exists $\Gamma \subset \pd \Omega \cap \pd B_R(0)$ such that $\Gamma$ is open in $\pd B_R(0)$, then we can obtain the same result as Proposition \ref{G_1 positive} as follows.

\begin{prop}\label{G_1 positive general}
Assume that there exists $\Gamma \subset \pd \Omega \cap \pd B_R(0)$ such that $\Gamma$ is open in $\pd B_R(0)$. 
Then $G_1 > 0$ if and only if $\beta=q=N$.
\end{prop}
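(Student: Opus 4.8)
The plan is to prove the two implications separately; the content lies entirely in the ``only if'' direction. For the ``if'' direction, suppose $\beta=q=N$. Then $\frac{N-1}{N}q+1=N$, and the Rayleigh quotient defining $G_1$ is exactly the one appearing in the critical Hardy inequality (\ref{H_N}) with $a=1$, which holds on every bounded domain with the positive optimal constant $\(\frac{N-1}{N}\)^N$. Hence $G_1\ge\(\frac{N-1}{N}\)^N>0$, and the hypothesis on $\Gamma$ plays no role here.

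For the ``only if'' direction I argue by contraposition: assuming (\ref{positivity cond}) together with $(\beta,q)\neq(N,N)$, I construct $u_k\in C_c^\infty(\Omega)$ whose Rayleigh quotients tend to $0$, which forces $G_1=0$. The mechanism is that for $a=1$ the weight $f_{1,\beta}(x)=|x|^{-N}(\log\frac{R}{|x|})^{-\beta}$ blows up as $|x|\to R$, and the relative openness of $\Gamma$ in $\pd B_R(0)$ means that near a point $x_0\in\Gamma$ the set $\Omega$ occupies the inner side of the sphere, so that an inward shell-cap $\{\,x: R-|x|\in(0,\delta),\ x/|x|\in\Sigma'\,\}\subset\Omega$ is available over a small angular cap $\Sigma'$ about $\theta_0:=x_0/R$. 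In polar coordinates $(r,\theta)$ the weight is radial, and near $r=R$ it satisfies $f_{1,\beta}=R^{\beta-N}(R-r)^{-\beta}\(1+O(R-r)\)$, so the local problem is governed by the normal variable $R-r$ carrying the singular weight $(R-r)^{-\beta}$.

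I then exploit the scale invariance of $\intO|\nabla u|^N\,dx$ together with anisotropic test functions $u_k(r,\theta)=\eta(\theta)\phi(r)$, where $\eta$ is an angular cutoff concentrated at scale $\mu$ about $\theta_0$ and $\phi$ is concentrated at scale $\nu$ near $r=R$, both with compact support bounded away from $r=R$ so that $u_k\in C_c^\infty(\Omega)$, and with $\nu\ll\mu$ so that the radial part of $|\nabla u_k|^N$ dominates the angular part. A homogeneity count then yields $\dfrac{\intO|\nabla u_k|^N\,dx}{\(\intO|u_k|^qf_{1,\beta}\,dx\)^{N/q}}\approx C\,\mu^{a}\,\nu^{E}$, where $a=(N-1)\(1-\tfrac{N}{q}\)$ and $E=(1-N)+(\beta-1)\tfrac{N}{q}$, the exponent $E$ vanishing exactly on the threshold curve $\beta=\frac{N-1}{N}q+1$. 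When $\beta>\frac{N-1}{N}q+1$ we have $E>0$, so fixing $\mu$ and letting $\nu\to0$ sends the quotient to $0$; when $\beta=\frac{N-1}{N}q+1$ and $q>N$ we have $E=0$ and $a>0$, so taking $\mu\to0$ with, say, $\nu=\mu^2$ sends the quotient to $0$. These two regimes exhaust (\ref{positivity cond}) except for the single point $(\beta,q)=(N,N)$, so $G_1=0$ whenever $(\beta,q)\neq(N,N)$, which together with the first paragraph gives the equivalence.

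The delicate point, and the main work, is the rigor of the localization. I must verify that the multiplicative errors from replacing $f_{1,\beta}$ by $R^{\beta-N}(R-r)^{-\beta}$, from the spherical Jacobian and the angular normal coordinates on $\Sigma'$ (giving factors $1+O(\nu)+O(\mu^2)$), and from the subdominant angular contribution to $|\nabla u_k|^N$ (of relative size $(\nu/\mu)^N$), all become negligible in the limit, so that the leading homogeneity $\mu^a\nu^E$ genuinely controls the quotient; this is precisely why the two cases are split according to which of $\mu,\nu$ is sent to $0$. Establishing the inward shell-cap in $\Omega$ from the relative openness of $\Gamma$, namely that $\Omega$ fills the inner side of $\pd B_R(0)$ near $\Gamma$, is the geometric input where the hypothesis on $\Gamma$ is used, and without which $G_1$ may well remain positive for other exponents.
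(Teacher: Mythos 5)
Your proposal is correct, and it reaches the conclusion by a partly different route than the paper, so a comparison is in order. Both proofs share the same skeleton: the ``if'' direction is exactly the critical Hardy inequality (\ref{H_N}) with $a=1$, and the ``only if'' direction localizes test functions in an inward shell-cap over $\Gamma$ where $f_{1,\beta}(x)\approx R^{\beta-N}(R-|x|)^{-\beta}$; note that the shell-cap step you flag as the geometric input is asserted in the paper in exactly the same way (``we can take $\delta>0$ and $\tilde\Gamma\subset\Gamma$ such that the shell over $\frac1R\tilde\Gamma$ lies in $\Omega$''), so you are on equal footing there. The difference is in how the degenerating functions are built. For $\beta>\frac{N-1}{N}q+1$ the paper uses a single \emph{fixed} function $u_s=(\log\frac{R}{r})^{s}\psi(\omega)\phi(r)$ with $s$ slightly above $\frac{N-1}{N}$: the Dirichlet integral is finite precisely for $s>\frac{N-1}{N}$, while the weighted integral diverges for $s\le\frac{\beta-1}{q}$, and the strict inequality $\frac{\beta-1}{q}>\frac{N-1}{N}$ leaves room for such an $s$; this needs no concentration, no error analysis, only two one-dimensional integrals (though it leaves implicit the small remark that a $W_0^{1,N}$ function with infinite weighted norm forces the optimal constant to vanish, e.g.\ by truncation, whereas your admissible concentrating family sidesteps this). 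For the borderline case $\beta=\frac{N-1}{N}q+1$, $q>N$, the paper notes $\beta>N$ and uses \emph{isotropic} bumps $v(|x-x_\ep|/\ep)$ at distance $2\ep$ from the sphere, with scale-invariant $N$-energy and weighted norm of order $\ep^{N-\beta}\to\infty$. Your two-scale family $\eta(\theta)\phi(r)$ with exponents $a=(N-1)(1-\frac{N}{q})$ and $E=(1-N)+(\beta-1)\frac{N}{q}$ unifies both regimes in one homogeneity count — indeed your formula specialized to $\mu=\nu=\ep$ gives quotient $\sim\ep^{\frac{N}{q}(\beta-N)}$, which is precisely the paper's second construction — and it makes the threshold conceptually transparent as the vanishing of $E$, with the pleasant sanity check that both exponents vanish exactly at $(\beta,q)=(N,N)$; the price is the error bookkeeping (relative errors $O(\nu)$, $O(\mu^2)$, and the angular gradient of relative size $(\nu/\mu)^N$), which you correctly identify and which is harmless in both of your limits because all errors enter multiplicatively (in case 1 the $O(\mu^2)$ factor is merely bounded, which suffices since $\nu^E\to0$). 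Your case analysis also exhausts (\ref{positivity cond}) correctly: under the standing assumption of this section, the only pair not covered by $E>0$ or by $E=0$, $a>0$ is $(\beta,q)=(N,N)$, matching the paper's combination of (\ref{beta>q}) and (\ref{beta>N}).
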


\begin{proof}[Proof of Proposition \ref{G_1 positive general}]
First we show that $G_1 =0$ if $\beta > \frac{N-1}{N}q +1$. 
Set $x=r \w \, ( r=|x|, \w \in S^{N-1})$ for $x \in \re^N$. 
From the assumption, we can take $\delta >0$ and $\tilde{\Gamma} \subset \Gamma$ such that $\tilde{\Gamma}$ is open in $\pd B_R(0)$ and
\begin{align*}
\left\{ (r, \w) \in [0, R) \times S^{N-1} \,\Biggl| \, R-2\delta \le r \le R, \w \in \frac{1}{R} \tilde{\Gamma} \right\} \subset \Omega.
\end{align*}
Let $0 \nequiv \psi \in C_c^{\infty}(\frac{1}{R} \tilde{\Gamma})$ and $\phi \in C^{\infty}([0,\infty))$ satisfy $\phi \equiv 1$ on $[R-\delta, R]$ and $\phi \equiv 0$ on $[0,R-2\delta]$. Set $u_s(x) = \( \log \frac{R}{r} \)^{s} \psi ( \omega ) \phi (r)$. 
%\begin{align*}
%u_s(x) =
%	   \begin{cases}
%		\( \log \frac{R}{r} \)^{s} \psi ( \omega ) &\text{if} \,\,\, R-\delta \leq r \leq R, \\
%		\( \log \frac{R}{r} \)^{s} \psi ( \omega ) \phi (r)  &\text{if} \,\,\, R-2\delta \leq r \leq R-\delta, \\
%		0  &\text{if} \,\,\, 0 \le r \le R-2\delta.
%	   \end{cases}
%\end{align*}
Then we have
\begin{align*}
\int_{\Omega} | \nabla u_s |^N dx &= \int_{S^{N-1}} \int_0^R \left| \frac{\pd u_s}{\pd r} \w + \frac{1}{r} \nabla_{S^{N-1}} u_s \right|^N r^{N-1} \, dr dS_{\w} \\
&\le 2^{N-1} \int_{S^{N-1}} \int_0^R \left| \frac{\pd u_s}{\pd r} \right|^N r^{N-1} + \left| \nabla_{S^{N-1}} u_s \right|^N r^{-1} \, dr dS_{\w} \\
&\le s^N C \int_{R-\delta}^R \( \log \frac{R}{r} \)^{(s-1)N} \frac{dr}{r} + C \int_{R-\delta}^R \( \log \frac{R}{r} \)^{sN} \frac{dr}{r} +C \\
&\le s^N C \int_0^{\log \frac{R}{R-\delta}} t^{(s-1)N} \,dt +C < \infty \quad \text{if}\,\,s > \frac{N-1}{N}.
%&=C \frac{s^N}{N} \( s-\frac{N-1}{N} \)^{-1} \( \log \frac{R}{R-\delta} \)^{s-N+1} +o\( \( s-\frac{N-1}{N} \)^{-1} \) < \infty.
\end{align*}
Thus $u_s \in W_0^{1,N}(\Omega )$ for all $s > \frac{N-1}{N}$.
However, direct calculation shows that
\begin{align*}
\intO \frac{|u_{s}|^q}{|x|^N \( \log \frac{R}{|x|} \)^{\beta} } dx 
\ge C \int_{R-\delta}^{R} \( \log \frac{R}{r} \)^{sq-\beta} \frac{dr}{r} =C \int_0^{\log \frac{R}{R-\delta}} t^{sq-\beta} \, dt
\end{align*}
which implies that
\begin{align*}
\intO \frac{|u_{s}|^q}{|x|^N \( \log \frac{R}{|x|} \)^{\beta} } dx = \infty
\end{align*}
for $s$ close to $\frac{N-1}{N}$ since $\beta > \frac{N-1}{N}q +1$.
Therefore we see that 
\begin{align}\label{beta>q}
G_1 =0 \quad \text{if} \quad \beta > \frac{N-1}{N}q +1.
\end{align}

Next we show that $G_1 =0$ if $\beta > N$.
Set $x_{\ep} = (R - 2\ep ) \frac{y}{R}$ for $y \in \pd B_R(0)$. Note that $B_{\ep}(x_{\ep}) \subset \Omega$ for small $\ep >0$ and some $y \in \Gamma$.
Then we define $u_\ep$ as follows:
\begin{align*}
u_{\ep}(x) =
	   \begin{cases}
		v\( \frac{|x-x_{\ep}|}{\ep} \) \,\,\,&\text{if} \,\,\, x \in B_{\ep}(x_{\ep}), \\
		0  &\text{if} \,\,\, x \in \Omega \setminus B_{\ep}(x_{\ep}),
	   \end{cases}
\,\, \text{where}\,\, v(t)=
	   \begin{cases}
		1 \,\,\,&\text{if} \,\,\,0\le t \le \frac{1}{2},  \\
		2(1-t)  &\text{if} \,\,\, \frac{1}{2} < t \le 1.
	   \end{cases}
\end{align*}
Since $\log t \le t-1$ for $t \ge 1$, we obtain
\begin{align*}
\intO | \nabla u_{\ep} (x)|^N \,dx &= \int_{B_1(0)} |\nabla v (|z|)|^N \,dz < \infty, \\
\intO \frac{|u_{\ep}(x)|^q}{|x|^N \( \log \frac{R}{|x|} \)^{\beta} } dx 
&\ge C \int_{B_\ep (x_\ep )} \frac{|u_{\ep}(x)|^q}{(R-|x|)^{\beta}} dx \ge  \frac{C}{(3\ep )^{\beta}} \int_{B_{\frac{\ep}{2}} (x_\ep )} dx =C \, \ep^{N-\beta} \to \infty 
\end{align*}
as $\ep \to 0$ when $\beta > N$.
Hence we see that 
\begin{align}\label{beta>N}
G_1 =0 \quad \text{if} \quad \beta > N.
\end{align}
From (\ref{beta>q}), (\ref{beta>N}), and (\ref{positivity cond}), we see that $G_1 >0$ if and only if $q=\beta =N$.
\end{proof}

If there does not exist $\Gamma$ in Proposition \ref{G_1 positive general}, then we can expect that the  relation between $q,\beta$ and the positivity of $G_1$ depends on geometry of the boundary $\pd \Omega$. 
In order to see it, we consider special cuspidal domains which satisfy the following conditions:
\begin{align*}
(\Omega_1): \,&\pd \Omega \cap \pd B_R(0) = \{ (0, \cdots , 0, -R) \}.\\
(\Omega_2): \,&\pd \Omega \,\text{is represented by a graph}\, \phi: \re^{N-1} \to [-R, \infty )\,\, \text{near the point}\,(0, \cdots, 0, -R). \\
&\text{Namely, for small $\delta >0$ the following holds true:}\\
&Q_\delta := \Omega \cap ( \re^{N-1} \times [-R, -R+\delta ] ) = \{ (x', x_N) \in \re^{N-1} \times [-R, -R+\delta ] \,|\,x_N > \phi (x') \}.\\
(\Omega_3): \,&\text{there exist}\, C_1, C_2 >0\, \text{and}\, \alpha \in (0, 1] \,\text{such that}\\
&C_1|x'|^{\alpha} \le \phi (x') +R \le C_2|x'|^{\alpha}\, \text{for any}\, x' \in \re^{N-1}.
\end{align*}

$\alpha$ in $(\Omega_3)$ expresses the sharpness of the cusp at the point $(0, \cdots, 0,-R)$.
Then we can obtain the following theorem concerned with the positivity and the attainability of $G_1$. 

\begin{theorem}\label{Thm cusp}
Assume that $\Omega$ satisfies the assumptions $(\Omega_1)-(\Omega_3)$. 
Then there exists $\beta^{*}=\beta^{*}(\alpha, q) \in [\frac{N-1}{\alpha} +1, \frac{N}{\alpha}]$ such that $G_1 =0$ for $\beta > \beta_*$ and $G_1 >0$ for $\beta < \beta^{*}$. 
Furthermore $G_1$ is attained for $\beta \in (\frac{N-1}{N}q+1, \beta^{*})$. 
%$G_1$ is also attained for $\alpha$ close to $1$ when $\beta = \frac{N-1}{N}q+1 < \beta_*$.
\end{theorem}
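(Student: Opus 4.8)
The plan is to localize the whole problem to the cusp tip $p=(0,\dots,0,-R)$. Under the standing assumption (\ref{positivity cond}) the origin is controlled exactly as in Proposition \ref{GH_N positive} and Lemma \ref{emb}, and on any set bounded away from $0$ and $p$ the weight $f_{1,\beta}$ is bounded, so the critical Hardy inequality together with the compact embedding $W_0^{1,N}(\Omega)\hookrightarrow L^{s}(\Omega)$ (any $s<\infty$, via extension by zero and Rellich) applies. Near $p$ I would use $t:=x_N+R>0$; by $(\Omega_3)$ the slice $\Omega_t=\{x':(x',-R+t)\in\Omega\}$ is comparable to an $(N-1)$-ball of radius $\sim t^{1/\alpha}$, and since $\alpha\le 1<2$ the term $|x'|^2$ in $R-|x|\approx t-|x'|^2/(2R)$ is negligible against $t$, so $f_{1,\beta}\approx R^{\beta-N}t^{-\beta}$ on the cusp $Q_\delta$. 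First I would record the monotonicity that makes $\beta^{*}$ well defined: if $G_1=0$ for some $\beta_0$, a sequence with $\|\nabla u_m\|_N=1$ and $\int_\Omega|u_m|^qf_{1,\beta_0}\,dx\to\infty$ must gain its mass on $\{|x|\ge R/e\}$, because on $\{|x|\le R/e\}$ the origin estimate keeps $\int|u_m|^qf_{1,\beta_0}$ bounded; as $f_{1,\beta}\ge f_{1,\beta_0}$ on $\{|x|\ge R/e\}$ whenever $\beta>\beta_0$ (there $\log\frac{R}{|x|}\le1$), also $\int|u_m|^qf_{1,\beta}\to\infty$, so $G_1=0$ for all $\beta>\beta_0$. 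Thus $\{\beta:G_1=0\}$ is an up-set and $\beta^{*}:=\inf\{\beta:G_1=0\}$ gives $G_1>0$ for $\beta<\beta^{*}$ and $G_1=0$ for $\beta>\beta^{*}$.

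For the lower bound $\beta^{*}\ge \frac{N-1}{\alpha}+1=:d$ I would prove $G_1>0$ whenever $\beta<d$. Since the volume element of $Q_\delta$ at height $t$ is $\sim t^{(N-1)/\alpha}\,dt$, one has $\int_{Q_\delta}t^{-\beta p}\,dx\sim\int_0^{\delta}t^{-\beta p+(N-1)/\alpha}\,dt<\infty$ as soon as $\beta p<d$; fixing $p\in(1,d/\beta)$ with conjugate $p'$, Hölder and $f_{1,\beta}\le C t^{-\beta}$ give $\int_{Q_\delta}|u|^qf_{1,\beta}\,dx\le C\,\|u\|_{L^{qp'}(\Omega)}^q\le C\,\|\nabla u\|_{L^N(\Omega)}^q$ by the Sobolev embedding $W_0^{1,N}\hookrightarrow L^{qp'}$. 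Combined with the origin and bulk estimates this bounds the whole quotient from above, so $G_1>0$ and $\beta^{*}\ge d$.

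For the upper bound $\beta^{*}\le \frac{N}{\alpha}$ I would exhibit a vanishing family. Fix $v\in C_c^{\infty}(B_1(0))$ and set $u_\rho(x)=v((x-x_\rho)/\rho)$, with $x_\rho$ on the axis at height $t\sim\rho^{\alpha}$; by $(\Omega_3)$ the ball $B_\rho(x_\rho)$ lies inside the cusp for small $\rho$ (here $\alpha\le1$ makes the transverse width $\sim t^{1/\alpha}$ exceed $\rho$). The $N$-Dirichlet energy is scale invariant, $\int_\Omega|\nabla u_\rho|^N\,dx=\int_{B_1}|\nabla v|^N\,dy=O(1)$, while $f_{1,\beta}\gtrsim t^{-\beta}\sim\rho^{-\alpha\beta}$ on $B_\rho(x_\rho)$, so $\int_\Omega|u_\rho|^qf_{1,\beta}\,dx\gtrsim\rho^{\,N-\alpha\beta}$. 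Hence the quotient is $\lesssim\rho^{-(N/q)(N-\alpha\beta)}\to0$ as $\rho\to0$ whenever $\beta>N/\alpha$, giving $G_1=0$ and $\beta^{*}\le N/\alpha$. Together with the previous paragraph this yields $\beta^{*}\in[\frac{N-1}{\alpha}+1,\frac{N}{\alpha}]$.

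Finally, for attainability when $\frac{N-1}{N}q+1<\beta<\beta^{*}$, I would run the direct method on a normalized minimizing sequence $u_m$ with $\int_\Omega|u_m|^qf_{1,\beta}\,dx=1$, $\|\nabla u_m\|_N^N\to G_1$, and $u_m\rightharpoonup u$. The only possible loss of the $L^q(f_{1,\beta}dx)$-mass is at the two singular points: at the origin it is excluded by the strict inequality $\beta>\frac{N-1}{N}q+1$ exactly as in Lemma \ref{emb}(i), and at $p$ by a Brezis--Lieb and concentration argument in the spirit of Lemma \ref{concentration level}. When $\beta<d$ this is immediate, since the Hölder bound of the second paragraph upgrades, via the \emph{compact} embedding $W_0^{1,N}\hookrightarrow L^{qp'}$, to compactness of $W_0^{1,N}(\Omega)\hookrightarrow L^q(\Omega;f_{1,\beta}dx)$; in the remaining range $d\le\beta<\beta^{*}$ one must instead show that any sequence concentrating at $p$ would drive its quotient down to one of the cusp concentration levels (the scale-invariant bubble level associated with $N/\alpha$, or the anisotropic profile level associated with the critical exponent $1+\frac{N-1}{\alpha}+\frac{q(1-\alpha)}{N\alpha}$), each of which strictly exceeds $G_1$ because $\beta<\beta^{*}$. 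This forces $u\not\equiv0$ with $\int_\Omega|u|^qf_{1,\beta}\,dx=1$, and weak lower semicontinuity makes $u$ a minimizer. I expect this last concentration analysis at the cusp tip to be the crux of the proof: one has to classify how mass can accumulate at $p$ under the anisotropic geometry of $(\Omega_3)$ and to verify that every such concentration profile costs more energy than $G_1$, which is precisely where the localization estimates above and the strict bound $\beta<\beta^{*}$ are used.
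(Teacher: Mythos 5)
Your threshold and positivity/vanishing arguments are correct, and in places cleaner than the paper's: the explicit monotonicity argument making $\beta^{*}$ well defined (splitting at $|x|=R/e$, where $\log\frac{R}{|x|}\le 1$ so that $f_{1,\beta}\ge f_{1,\beta_0}$ for $\beta>\beta_0$, while the origin region stays bounded under (\ref{positivity cond})) fills in a step the paper only asserts; your test family concentrated at height $t\sim\rho^{\alpha}$ is the same construction as the paper's $w_{\ep}$ for $G_1=0$ when $\beta>\frac{N}{\alpha}$; and your two-factor H\"older estimate on the cusp, choosing $p\in(1,\frac{1}{\beta}(\frac{N-1}{\alpha}+1))$ and using $W_0^{1,N}(\Omega)\hookrightarrow L^{qp'}(\Omega)$ together with the volume count $\int_0^\delta t^{-\beta p+\frac{N-1}{\alpha}}dt<\infty$, is a genuine simplification of the paper's three-factor H\"older argument, which additionally invokes the Hardy inequality on the half space to handle the factor $|\tilde{u}|^{\ep}|z|^{-\ep}$. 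Your observation that $R-|x|\approx t$ on $Q_\delta$ because $|x'|^2=O(t^{2/\alpha})=o(t)$ is exactly the justification needed there.

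The genuine gap is in the attainability claim on the range $\frac{N-1}{\alpha}+1\le\beta<\beta^{*}$. You leave the announced ``crux'' — a classification of concentration profiles at the tip and a comparison of $G_1$ with conjectured cusp concentration levels, including an exponent $1+\frac{N-1}{\alpha}+\frac{q(1-\alpha)}{N\alpha}$ that you neither derive nor verify — entirely unexecuted, and the key assertion that every such level strictly exceeds $G_1$ ``because $\beta<\beta^{*}$'' does not follow from anything you prove: $\beta^{*}$ is defined through positivity of $G_1$, not through energy comparisons with limiting profiles, so connecting the two would itself require a quantitative estimate you do not supply. In fact no concentration analysis is needed, because one can interpolate in the exponent rather than in the profile: this is the paper's route. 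Pick $\gamma$ with $\beta<\gamma<\beta^{*}$; shrinking $\delta$ so that $(\log\frac{R}{|x|})^{\gamma-\beta}<\ep$ on $Q_\delta$, the cusp contribution of $u_{m_k}-u$ is at most $\ep\int_{Q_\delta}|u_{m_k}-u|^q f_{1,\gamma}\,dx\le C\ep\,\|\nabla(u_{m_k}-u)\|_{L^N}^q\le C\ep$, using only $G_1>0$ at the exponent $\gamma$ (valid since $\gamma<\beta^{*}$); on $\Omega\setminus Q_\delta$ one has $\log\frac{R}{|x|}\ge C\log\frac{aR}{|x|}$ for some $a>1$, so Lemma \ref{emb} (i) applies since $\beta>\frac{N-1}{N}q+1$ and that term tends to $0$. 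This shows the embedding $W_0^{1,N}(\Omega)\hookrightarrow L^q(\Omega;f_{1,\beta}\,dx)$ is compact on the \emph{whole} interval $(\frac{N-1}{N}q+1,\beta^{*})$, so mass cannot concentrate at the tip at all, your ``remaining range'' collapses into the same compactness statement as your subcase $\beta<\frac{N-1}{\alpha}+1$, and the direct method concludes. Without this device, or an actually carried-out profile decomposition with verified levels, your proof of the final assertion of the theorem is incomplete precisely at the point you yourself flag as the difficulty.
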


\begin{remark}
When $\beta =q =N$ and $0 \in \Omega$, $G_1$ is not attained for any bounded domain. However, when $0 \not\in \Omega$, the attainability of $G_1$ depends on a geometry of the boundary $\pd \Omega$. Very recently, Byeon and Takahashi investigate the attainability of $G_1$ on cuspidal domains in their article \cite{BT} when $\beta =q =N$. 
\end{remark}

\begin{proof}[{\bf Proof of Theorem \ref{Thm cusp}}]
First we shall show that $G_1 =0$ if $\beta > \frac{N}{\alpha}$. 
From $(\Omega_3)$, we can observe that $B_{A \ep^{\frac{1}{\alpha}}}(x_{\ep}) \subset \Omega$ for small $\ep >0$ and small $A >0$, where $x_\ep =(0, \cdots, 0, -R+2\ep)$. 
Then we define $w_\ep$ as follows:
\begin{align*}
w_{\ep}(x) =
	   \begin{cases}
		v\( \frac{|x-x_{\ep}|}{A \ep^{\frac{1}{\alpha}}} \) \,\,\,&\text{if} \,\,\, x \in B_{A \ep^{\frac{1}{\alpha}}}(x_{\ep}), \\
		0  &\text{if} \,\,\, x \in \Omega \setminus B_{A \ep^{\frac{1}{\alpha}}}(x_{\ep}),
	   \end{cases}
\end{align*}
where $v$ is the same function in the proof of Proposition \ref{G_1 positive general}. 
In the same way as the proof of Proposition \ref{G_1 positive general}, we have
\begin{align*}
\intO | \nabla_x w_{\ep}(x) |^N \,dx < \infty, \,\, \intO \frac{|w_{\ep}(x)|^q}{|x|^N \( \log \frac{R}{|x|} \)^{\beta} } dx \ge C \, \ep^{\frac{N}{\alpha}-\beta} \to \infty 
\end{align*}
as $\ep \to 0$ if $\beta > \frac{N}{\alpha}$.
Therefore we have
\begin{align}\label{G=0}
G_1 =0 \quad \text{at least for}\,\, \beta > \frac{N}{\alpha}.
\end{align}

Next we shall show that $G_1 >0$ if $\beta < \frac{N-1}{\alpha}+1$. 
For $u \in W_0^{1,N}(\Omega)$, we divide the domain $\Omega$ into three parts as follows:
\begin{align}\label{123}
\intO \frac{|u(x)|^q}{|x|^N \( \log \frac{R}{|x|} \)^{\beta} } dx 
= \int_{\Omega \cap B_{\frac{R}{2}}(0)} + \int_{\Omega \setminus \( B_{\frac{R}{2}}(0) \cup Q_\delta \)} + \int_{Q_\delta } =: I_1 + I_2 + I_3.
\end{align}
From Theorem A, we obtain
\begin{align}\label{I}
I_1 \le C \( \intO |\nabla u |^N dx \)^{\frac{q}{N}}.
\end{align}
Since the potential function $|x|^{-N}(\log \frac{R}{|x|})^{-\beta}$ does not have any singularity in $\Omega \setminus \( B(\frac{R}{2}) \cup Q_\delta \)$, the Sobolev inequality yields that 
\begin{align}\label{II}
I_2 \le C \intO |u|^q dx \le C \( \intO | \nabla u|^N dx \)^{\frac{q}{N}}.
\end{align}
Finally, we shall derive a estimate of $I_3$ from above. 
Since $\log t \ge \frac{1}{2} (t-1) \,(1 \le t \le 2)$, we obtain
\begin{align}\label{III'}
I_3 \le C \int_{Q_\delta} \frac{|u(x)|^q}{(R-|x|)^{\beta}} dx \le C \int_{z_N = 0}^{z_N = \delta} \int_{z_N \ge C_1 |z'|^{\alpha}} \frac{|\tilde{u} (z', z_N)|^q}{|z|^{\beta}} dz,
\end{align}
where $u(x)=\tilde{u}(z) \,(z=x + (0, \cdots, 0, R))$.
%By Lemma \ref{HS cone} and the assumption $\alpha \beta < N-1$, it holds that
If $\beta < \frac{N-1}{\alpha}+1$, then there exists $\ep >0$ and $p> \frac{N}{N-\ep}$ such that $(\beta -\ep )p < \frac{N-1}{\alpha} +1$.
By using the H\"older inequality and the Sobolev inequality, we have
\begin{align*}
&\int_{z_N = 0}^{z_N = \delta} \int_{z_N \ge C_1 |z'|^{\alpha}} \frac{|\tilde{u} (z', z_N)|^q}{|z|^{\beta}} dz \\
&= \iint \frac{|\tilde{u}|^\ep}{|z|^\ep} |\tilde{u}|^{q-\ep} |z|^{\beta -\ep} dz \\
&\le \( \iint \frac{|\tilde{u}|^N}{|z|^N} dz \)^{\frac{\ep}{N}} \( \iint |\tilde{u}|^{(q-\ep ) \frac{Np}{Np-N-p\ep }} dz \)^{\frac{Np-N-p\ep}{Np}} \( \iint |z|^{-(\beta -\ep )p} dz \)^{\frac{1}{p}} \\
&\le C \( \iint \frac{|\tilde{u}|^N}{|z_N|^N} dz \)^{\frac{\ep}{N}} \( \intO |\nabla \tilde{u}|^N dz \)^{\frac{q-\ep}{N}} \( \iint_{|z^{\prime}| \le \( \frac{ z_N }{C_1} \)^{\frac{1}{\alpha}}} z_N^{-(\beta -\ep )p} dz \)^{\frac{1}{p}} \\
&\le C \( \iint \frac{|\tilde{u}|^N}{|z_N|^N} dz \)^{\frac{\ep}{N}} \( \intO |\nabla \tilde{u}|^N dz \)^{\frac{q-\ep}{N}} \( \int_{z_N=0}^{z_N=\delta} z_N^{\frac{N-1}{\alpha} -(\beta -\ep )p} dz_N \)^{\frac{1}{p}}.
\end{align*}
Since $\frac{N-1}{\alpha} - (\beta -\ep )p > -1$, $\int_{0}^{\delta} z_N^{\frac{N-1}{\alpha} -(\beta -\ep )p} dz_N < \infty$.
Furthermore, applying the Hardy inequality on the half space $\re^N_{+}:= \{ \,(x', x_N) \in \re^{N-1} \times \re \,|\, x_N >0 \,\}$:
\begin{align*}
\( \frac{r-1}{r} \)^r \int_{\re^N_{+}} \frac{|u|^r}{|x_N |^r} dx \le \int_{\re^N_{+}} | \nabla u |^r dx  \quad (1 \le r < \infty )
\end{align*}
yields that
\begin{align}\label{III"}
\int_{z_N = 0}^{z_N = \delta} \int_{z_N \ge C_1 |z'|^{\alpha}} \frac{|\tilde{u} (z', z_N)|^q}{|z|^{\beta}} dz \le C \( \int_{\Omega + (0, \cdots, 0, R)} | \nabla \tilde{u} |^N dz \)^{\frac{q}{N}}.
\end{align}
By (\ref{III'}) and (\ref{III"}), we have
\begin{align}\label{III}
I_3 \le C  \( \intO | \nabla u|^N dx \)^{\frac{q}{N}}.
\end{align}
Therefore, from (\ref{123}), (\ref{I}), (\ref{II}), and (\ref{III}), for all $u \in W_0^{1,N}(\Omega)$,
\begin{align*}
C \( \intO \frac{|u(x)|^q}{|x|^N \( \log \frac{R}{|x|} \)^{\beta} } dx \)^{\frac{N}{q}} \le \intO | \nabla u|^N dx.
\end{align*}
Hence 
\begin{align}\label{G>0}
G_1 >0 \quad \text{at least for} \quad \beta < \frac{N-1}{\alpha}+1.
\end{align}
From (\ref{G=0}) and (\ref{G>0}), there exists $\beta^{*} \in [\frac{N-1}{\alpha} +1, \frac{N}{\alpha}]$ such that 
$G_1 > 0$ for $\beta < \beta^{*}$ and $G_1 = 0$ for $\beta > \beta^{*}$.

Lastly we shall show that $G_1$ is attained for $\beta \in (\frac{N-1}{N}q+1, \beta^{*})$.
In order to show it, we show that the continuous embedding $W_0^{1,N}(\Omega) \hookrightarrow L^q(\Omega; f_{1, \beta}(x) dx)$ is compact if $\frac{N-1}{N}q+1 < \beta <\beta^{*}$. %, where $f_{a,\beta}(x)=|x|^{-N} (\log \frac{aR}{|x|})^{-\beta}$. 
Let $(u_m)_{m=1}^{\infty} \subset W_0^{1,N}(\Omega)$ be a bounded sequence. Then there exists a subsequence $(u_{m_k})_{k=1}^{\infty}$ such that
\begin{align}\label{L^r x}
&u_{m_k} \rightharpoonup u \,\, \text{in} \,\, W_0^{1,N}(\Omega ), \nonumber \\
&u_{m_k} \to u \,\, \text{in} \,\, L^r(\Omega ) \quad \text{for all} \,\, 1\le r < \infty.
\end{align}
We divide the domain into two parts as follows:
\begin{align}\label{12}
\intO \frac{|u_{m_k} - u|^q}{|x|^N \( \log \frac{R}{|x|} \)^{\beta} } dx 
= \int_{\Omega \setminus Q_\delta} + \int_{Q_\delta } =: J_1(u_{m_k} -u) + J_2(u_{m_k} -u).
\end{align}
Since $\log \frac{R}{|x|} \ge C \log \frac{aR}{|x|}$ for any $x \in \Omega \setminus Q_\delta$ for some $a>1$ and $C>0$, it holds that
\begin{align*}
J_1(u_{m_k} -u) \le C \int_{\Omega \setminus Q_\delta} \frac{|u_{m_k} - u|^q}{|x|^N \( \log \frac{aR}{|x|} \)^{\beta} } dx \le C \int_{\Omega} \frac{|u_{m_k} - u|^q}{|x|^N \( \log \frac{aR}{|x|} \)^{\beta} } dx.
\end{align*}
Note that the continuous embedding $W_0^{1,p}(\Omega) \hookrightarrow L^q(\Omega; f_{a,\beta}(x) dx)$ is compact for $\beta >\frac{N-1}{N}q+1$ from Lemma \ref{emb}, we obtain
\begin{align}\label{J_1}
J_1(u_{m_k} -u) \to 0 \quad \text{as}\,\, k \to \infty.
\end{align}
On the other hand, for any $\ep >0$, we take $\gamma >0$ which satisfies $\beta <\gamma <\beta^{*}$ and $(\log \frac{R}{|x|})^{\gamma -\beta} < \ep$ for $x \in Q_\delta$ (If necessary, we take small $\delta >0$ again.).
Then we have
\begin{align}\label{J_2}
J_2(u_{m_k} -u) &\le \ep \int_{Q_\delta} \frac{|u_{m_k} - u|^q}{|x|^N \( \log \frac{R}{|x|} \)^{\gamma} } dx \le C \ep \( \intO |\nabla (u_{m_k}-u) |^N \,dx \)^{\frac{q}{N}} \le C \ep.
\end{align}
From (\ref{12}), (\ref{J_1}), and (\ref{J_2}), we have
\begin{align*}
\intO \frac{|u_{m_k} - u|^q}{|x|^N \( \log \frac{R}{|x|} \)^{\beta} } dx \to 0 \quad \text{as}\,\, k \to \infty.
\end{align*}
Therefore the continuous embedding $W_0^{1,N}(\Omega) \hookrightarrow L^q(\Omega; f_{1,\beta}(x) dx)$ is compact if $\frac{N-1}{N}q+1 < \beta <\beta^{*}$. 
In conclusion, we have showed that $G_1$ is attained if $\frac{N-1}{N}q+1 < \beta <\beta^{*}$.
\end{proof}

%%%%%%%%%%%%%%%%%%%%%%%%%%%%%%%%%%%%%%%%%%%%%%%%%%%%%%%%%%%%%%%%%

Next we extend Theorem \ref{Thm ball} to general bounded domains.

\begin{theorem}\label{Thm general}
Let $a>1$. Then the followings hold.

\noindent
(i) If \,$\beta > \frac{N-1}{N}q +1$, then $G_a$ is attained for any bounded domains $\Omega$.

\noindent
(ii) If $\beta = \frac{N-1}{N}q +1, q > N$, and $a \ge e^{\frac{\beta}{N}}$, then $G_a=G_{\rm rad}$ and $G_a$ is not attained for any bounded domain $\Omega$.

\noindent
(iii) If $\beta = \frac{N-1}{N}q +1, q > N$, and $\Omega$ satisfies either $(\Omega_4)$ or $(\Omega_5)$, where

\noindent
$(\Omega_4):$ $\pd \Omega$ satisfies the Lipschitz condition at some point $x_0 \in \Omega \cap B_R(0)$,

\noindent
$(\Omega_5):$ $\Omega$ satisfies $(\Omega_1)-(\Omega_3)$ and $\alpha$ in $(\Omega_3)$ is grater than $\frac{N}{\beta}$,

\noindent
then there exists $a_* \in (1, e^{\frac{\beta}{N}}]$ such that $G_a$ is attained for $a \in (1,a_*)$ and $G_a$ is not attained for $a > a_*$.
\end{theorem}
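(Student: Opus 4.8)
The plan is to reproduce the three steps of Theorem~\ref{Thm ball}, replacing each ball-specific ingredient by a general-domain analogue; throughout, let $Q_a(u)$ denote the Rayleigh quotient whose infimum defines $G_a$ in (\ref{GH_N const}). For (i), observe that for $a>1$ the weight $f_{a,\beta}$ is singular only at the origin, since $\log\frac{aR}{|x|}\ge\log a>0$ on $\Omega$. Hence the proof of Lemma~\ref{emb}~(i) applies verbatim: splitting $\intO|u_{m_k}-u|^q f_{a,\beta}\,dx$ over $B_\delta(0)$ and $\Omega\setminus B_\delta(0)$, controlling the inner piece by Theorem~A with an exponent $\frac{N-1}{N}q+1<\alpha<\beta$ (after extending by zero to $\re^N$) and the outer piece by Rellich--Kondrachov $L^q$-convergence, shows $W_0^{1,N}(\Omega)\hookrightarrow L^q(\Omega;f_{a,\beta}\,dx)$ is compact, and the direct method produces a minimizer. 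For (ii), a one-line computation shows $f_{a,\beta}$ is radially nonincreasing on $B_R(0)\supset\Omega$ exactly for $a\ge e^{\beta/N}$; extending $u$ by zero and applying P\'olya--Szeg\"o and Hardy--Littlewood gives $Q_a(u)\ge Q_a(u^{\#})\ge G_{a,{\rm rad}}=G_{\rm rad}$, so $G_a\ge G_{\rm rad}$, while the origin-concentrating rescalings $u_\lambda$ from the proof of Proposition~\ref{G_rad}~(ii)—which preserve $Q_a$ and, for small $\lambda$, are supported in $\Omega$—give $G_a\le G_{\rm rad}$; a minimizer, extended and symmetrized, would then realize $G_{a,{\rm rad}}$ on $B_R(0)$, contradicting Proposition~\ref{G_rad}~(ii).

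The substance is (iii), which I would reduce to four facts: (a) $G_a=G_{\rm rad}$ for $a\ge e^{\beta/N}$ (part (ii)); (b) $a\mapsto G_a$ is nondecreasing, since $a\mapsto f_{a,\beta}$ is pointwise nonincreasing, and continuous on $[1,\infty)$; (c) the concentration bound of Lemma~\ref{concentration level}, so that $G_a<G_{\rm rad}$ forces attainment; and (d) $G_1<G_{\rm rad}$. Continuity at $a=1$ I would prove without the ball scaling of Lemma~\ref{conti G_a}: monotonicity gives $G_a\ge G_1$, and since $\Omega\subset B_R(0)$ every $u\in C_c^\infty(\Omega)$ has support in some $B_{R'}(0)$ with $R'<R$, whence $Q_a(u)\to Q_1(u)$ as $a\to1^+$ by dominated convergence; together with $\inf_{C_c^\infty(\Omega)}Q_1=G_1$ (Fatou) this yields $\lim_{a\to1^+}G_a=G_1$. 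Granting (a)--(d), I set $a_*:=\sup\{a\ge1:G_a<G_{\rm rad}\}$; then $1<a_*\le e^{\beta/N}$, attainment for $a\in(1,a_*)$ follows from (c), and for $a>a_*$ one has $G_a=G_{\rm rad}$ and the strict inequality $f_{a,\beta}<f_{a_*,\beta}$ off the origin turns a hypothetical minimizer $u$ into $G_{\rm rad}=G_a=Q_a(u)>Q_{a_*}(u)\ge G_{a_*}=G_{\rm rad}$, exactly as in Theorem~\ref{Thm ball}~(ii).

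Establishing (d), i.e. $G_1=0$, under either hypothesis is where the geometry enters and is the main obstacle. Under $(\Omega_5)$ the assumption $\alpha>\frac{N}{\beta}$ reads $\beta>\frac{N}{\alpha}\ge\beta^{*}$, so $G_1=0$ is immediate from Theorem~\ref{Thm cusp}. Under $(\Omega_4)$ I would run the bump construction of Proposition~\ref{G_1 positive general}: because $q>N$ forces $\beta=\frac{N-1}{N}q+1>N$, it suffices to exhibit balls $B_{c\ep}(x_\ep)\subset\Omega$ on which $R-|x|\sim\ep$, for then $\intO|u_\ep|^q f_{1,\beta}\,dx\gtrsim\ep^{N-\beta}\to\infty$, as in (\ref{beta>N}), whence $G_1=0$. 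The Lipschitz hypothesis is precisely what supplies such balls: the interior-cone condition at the boundary point through which $\Omega$ reaches the singular sphere $|x|=R$ forces a non-tangential approach region, giving balls of radius $\sim\ep$ at distance $\sim\ep$ from that sphere.

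Finally I would record that Lemma~\ref{concentration level} transfers to $\Omega$ unchanged, its proof being local at the origin: the cut-off $\phi_\ep$ is supported in $B_{\ep R}(0)\subset\Omega$, the rescaling $y=x/\ep$ sends $\tilde{u_m}\tilde{\phi_\ep}$ into $W_0^{1,N}(B_R(0))$, and $G_{a\ep^{-1}}=G_{\rm rad}$ (valid once $a\ep^{-1}\ge e^{\beta/N}$) is invoked as before. The genuinely delicate point remains (d) under $(\Omega_4)$: one must verify that the Lipschitz regularity really produces balls accumulating on $|x|=R$ at the rate $R-|x|\sim\ep$, for otherwise $G_1$ could stay bounded away from $0$—indeed $\ge G_{\rm rad}$—and the attainment interval would collapse to $a_*=1$.
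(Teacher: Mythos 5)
Your proposal is correct, and its skeleton --- the identity $G_a = G_{\rm rad}$ for $a \ge e^{\beta/N}$, monotonicity and continuity of $G_a$ in $a$, the concentration alternative of Lemma \ref{concentration level} transferred to $\Omega$ (your locality remark about the cut-off at the origin is exactly why the transfer works, together with $f_{a,\beta}$ being bounded away from $0$ when $a>1$), and $G_a \to 0$ as $a \searrow 1$ --- is the paper's skeleton; but three of your ingredients genuinely differ. For (ii), the paper does not symmetrize: it sandwiches $W_{0,{\rm rad}}^{1,N}(B_\ep(0)) \subset W_0^{1,N}(\Omega) \subset W_0^{1,N}(B_R(0))$ by zero extension, uses the scale invariance from the proof of Proposition \ref{G_rad} (ii) to show both ends of (\ref{zero extension}) equal $G_{\rm rad}$, and obtains non-attainment by extending a hypothetical minimizer to the ball and quoting Theorem \ref{Thm ball} (ii); your P\'olya--Szeg\"o/Hardy--Littlewood route (with the correct observation that $f_{a,\beta}$ is radially nonincreasing exactly for $a \ge e^{\beta/N}$) is equivalent and in fact treats $a = e^{\beta/N}$ uniformly, whereas the paper's citation strictly covers $a > a_*$ and defers the borderline case to its Remark. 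For (iii), the paper fuses your steps (b) and (d) into the single quantitative bound of Proposition \ref{prop cusp}, $G_a \le C(a-1)^{\frac{N}{q}(\beta - \frac{N}{\alpha})}$, proved with a moving bump of radius $c(a-1)^{1/\alpha}$ centered at $x_a = R(2-a)x_0/|x_0|$; since the exponent is positive under $(\Omega_4)$ (where $\alpha = 1$ and $\beta = \frac{N-1}{N}q+1 > N$) or $(\Omega_5)$ (where $\beta > \frac{N}{\alpha}$), this yields $\lim_{a \searrow 1} G_a = 0$ directly and thereby sidesteps continuity at $a=1$, which on general domains cannot be obtained by the dilation trick of Lemma \ref{conti G_a}. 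Your replacement --- monotonicity plus $Q_a(u) \to Q_1(u)$ for fixed $u \in C_c^\infty(\Omega)$ by monotone convergence, plus $\inf_{C_c^\infty(\Omega)} Q_1 = G_1$ via Fatou --- is sound and actually more general (it gives right-continuity at $a=1$ on every bounded domain, with no boundary geometry), at the price of needing $G_1 = 0$ separately, which you correctly supply from Theorem \ref{Thm cusp} under $(\Omega_5)$ and from the static interior-cone bump under $(\Omega_4)$; your reading of the paper's ``$x_0 \in \Omega \cap B_R(0)$'' as a boundary point of $\pd\Omega \cap \pd B_R(0)$ is the right one, and your cone justification (any interior cone at a point of $\pd B_R(0)$ must point strictly inward, so $R - |x| \le C\ep$ on balls $B_{c\ep}(x_\ep)$) fills in a step the paper leaves implicit. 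Finally, for non-attainment when $a > a_*$ you dispense with the paper's regularity and strong-maximum-principle appeal, since $f_{a_*,\beta} > f_{a,\beta}$ off the origin and $|\{u \ne 0\}| > 0$ already give the strict inequality; the only point to make explicit is that your chain uses $G_{a_*} = G_{\rm rad}$, which does not follow from monotonicity alone but does follow from the test-function right-continuity you established (let $a \searrow a_*$ in $G_{\rm rad} = G_a \le Q_a(u)$), or, alternatively, one can compare with an intermediate $a' \in (a_*, a)$ and use $G_{a'} = G_{\rm rad}$, which is what the paper's argument effectively does.
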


In order to show Theorem \ref{Thm general} (iii), we need the continuity of $G_a$ with respect to $a$ at $a=1$. Under the assumptions $(\Omega_4)$, $(\Omega_5)$, we can show the continuity of $G_a$ at $a=1$ as follows.

\begin{lemma}\label{lemma cusp}
Let $\beta > N$. If $\Omega$ satisfies either $(\Omega_4)$ or $(\Omega_5)$, then $G_1 = \lim_{a\searrow 1} G_a =0$.
\end{lemma}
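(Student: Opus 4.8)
The plan is to establish the single statement $\lim_{a \searrow 1} G_a = 0$ and to read off $G_1 = 0$ from it. Since $a \mapsto \log \frac{aR}{|x|}$ increases, we have $f_{a,\beta} \le f_{1,\beta}$ on $\Omega$ for every $a \ge 1$, hence $0 \le G_1 \le G_a$; so once $\lim_{a\searrow1}G_a=0$ is proved, $G_1 = 0$ is immediate (independently, $G_1 = 0$ is already contained in Theorem~\ref{Thm cusp} under $(\Omega_5)$ and in the construction~(\ref{beta>N}) under $(\Omega_4)$). The point to keep in mind is that for each fixed $a>1$ the singular sphere $\{|x|=aR\}$ of $f_{a,\beta}$ lies outside $\overline{\Omega}$, so $f_{a,\beta}$ is bounded on $\overline{\Omega}$ and $G_a>0$ by Proposition~\ref{GH_N positive}; the task is to show that this positive number collapses to $0$ as $a\searrow1$. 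Note also that the dilation trick of Lemma~\ref{conti G_a} used the scaling symmetry of the ball and is not available here.

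I would reuse the boundary-concentrating test functions from the $G_1=0$ arguments. Under $(\Omega_5)$ take $w_\ep$ supported in the cuspidal cap $B_{A\ep^{1/\alpha}}(x_\ep)$, $x_\ep = (0,\dots,0,-R+2\ep)$, exactly as in the proof of Theorem~\ref{Thm cusp}; under $(\Omega_4)$ take $w_\ep(x) = v\(|x-x_\ep|/(A\ep)\)$ supported in an interior ball $B_{A\ep}(x_\ep)\subset\Omega$ with $R-|x_\ep|\approx\ep$. In the latter case the existence of such balls is the one geometric input beyond the earlier proofs: the Lipschitz condition at the point where $\pd\Omega$ meets $\pd B_R(0)$ supplies a nondegenerate interior cone, and because $\Omega\subset\overline{B_R(0)}$ this cone must open strictly inward (its axis cannot be tangent to $\pd B_R(0)$), which forces $R-|x_\ep|\approx\ep$ along the axis. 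In either case a change of variables gives $\intO |\nabla w_\ep|^N\,dx = \int_{B_1(0)}|\nabla v|^N\,dz =: C$, a positive constant independent of both $\ep$ and $a$.

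Two estimates then finish the proof. First, for each fixed $\ep$ the support of $w_\ep$ stays at distance $\gtrsim \ep$ from $\pd B_R(0)$ and away from the origin, so $f_{1,\beta}$ is bounded there; since $f_{a,\beta}\nearrow f_{1,\beta}$ pointwise as $a\searrow1$, monotone convergence yields $\intO |w_\ep|^q f_{a,\beta}\,dx \to I(\ep):=\intO |w_\ep|^q f_{1,\beta}\,dx<\infty$. Second --- and this is exactly the computation already made for $G_1=0$ --- on the support $\log\frac{R}{|x|}\approx (R-|x|)/R\approx\ep$ while the volume is $\approx\ep^{N/\alpha}$ (resp.\ $\approx\ep^N$), so $I(\ep)\ge c\,\ep^{N/\alpha-\beta}$ (resp.\ $c\,\ep^{N-\beta}$), which tends to $+\infty$ as $\ep\to0$ precisely because $\beta>N/\alpha$ under $(\Omega_5)$ (resp.\ $\beta>N$ under $(\Omega_4)$). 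Combining, for every fixed $\ep>0$,
\begin{align*}
0 \le \limsup_{a\searrow1} G_a \le \lim_{a\searrow1} \dfrac{C}{\( \intO |w_\ep|^q f_{a,\beta}\,dx \)^{\frac{N}{q}}} = \dfrac{C}{I(\ep)^{\frac{N}{q}}},
\end{align*}
and letting $\ep\to0$ sends the right-hand side to $0$; hence $\lim_{a\searrow1}G_a=0$.

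The main obstacle is not a single sharp inequality but the loss of scaling symmetry: one cannot transport $G_1$ to $G_a$ directly, so the degeneration of $G_a$ must be produced by the interaction of two scales, the concentration width $\ep$ of $w_\ep$ and the regularization width $\log a$ of the potential. The double-limit order (first $a\searrow1$ with $\ep$ fixed, then $\ep\to0$) is what cleanly decouples them; heuristically the optimal choice is the diagonal $\ep\sim\log a$, at which $\intO|w_\ep|^q f_{a,\beta}\,dx\sim(\log a)^{N/\alpha-\beta}\to\infty$, and the hypothesis $\alpha\beta>N$ (that is, $\alpha>N/\beta$, resp.\ $\beta>N$) is precisely what makes this optimal width positive and the integral blow up. The only genuinely new verification relative to the $G_1=0$ proofs is the interior-cone geometry in case $(\Omega_4)$ guaranteeing the balls $B_{A\ep}(x_\ep)\subset\Omega$ with $R-|x_\ep|\approx\ep$; everything else is a quantitative rerun of computations already in the paper.
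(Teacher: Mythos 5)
Your proof is correct, and it reaches the lemma by a genuinely softer route than the paper. The paper proves the quantitative Proposition~\ref{prop cusp}: it tests $G_a$ with a single $a$-dependent bump $\phi_a$ concentrated at the \emph{diagonal} scale --- center at depth $\approx R(a-1)$ inside the sphere, width $c(a-1)^{1/\alpha}$, i.e.\ exactly the coupling $\ep \sim a-1$ that you mention only as a heuristic --- and obtains the rate $G_a \le C(a-1)^{\frac{N}{q}\left(\beta-\frac{N}{\alpha}\right)}$, from which the lemma is immediate because the exponent is positive under $\beta>N$ (case $(\Omega_4)$, $\alpha=1$) resp.\ $\alpha>\frac{N}{\beta}$ (case $(\Omega_5)$). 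You instead keep the two scales decoupled: with $a$-independent test functions $w_\ep$ recycled from the $G_1=0$ computations of Proposition~\ref{G_1 positive general} and Theorem~\ref{Thm cusp}, the monotonicity $f_{a,\beta}\nearrow f_{1,\beta}$ as $a\searrow 1$ gives $\limsup_{a\searrow 1}G_a \le C\, I(\ep)^{-N/q}$ for each fixed $\ep$, and $I(\ep)\gtrsim \ep^{N/\alpha-\beta}\to\infty$ finishes; together with $G_1\le G_a$ this yields $G_1=\lim_{a\searrow1}G_a=0$. What the paper's version buys is an explicit decay rate for $G_a$; what yours buys is that no coupling of parameters has to be guessed, and the exchange-of-limits structure makes the mechanism transparent. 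In case $(\Omega_4)$ your write-up is actually more careful than the paper's, which simply asserts $B_{c(a-1)^{1/\alpha}}(x_a)\subset\Omega$: you justify the interior balls via the Lipschitz cone at $x_0\in\pd\Omega\cap\pd B_R(0)$ (the intended reading of $(\Omega_4)$, whose statement has a typo) and the observation that, since $\Omega\subset B_R(0)$, every direction of the cone must point strictly into the sphere, forcing $R-|x_\ep|\approx\ep$ along the axis. One peripheral slip: $f_{a,\beta}$ is \emph{not} bounded on $\overline{\Omega}$ for $a>1$, since it still blows up like $|x|^{-N}$ at the origin $0\in\Omega$; it is bounded only away from $0$, which is all your argument actually uses (the positivity $G_a>0$ comes from Proposition~\ref{GH_N positive} under (\ref{positivity cond}), not from boundedness of the weight). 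This remark is outside the logical chain of your proof, which stands as written.
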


Lemma \ref{lemma cusp} follows from the following proposition.

\begin{prop}\label{prop cusp}
Let $a >1$. If $\Omega$ satisfies either $(\Omega_4)$ or $(\Omega_5)$, then there exists $C>0$ such that for $a$ close to $1$, $G_a \le C (a-1)^{\frac{N}{q}(\beta -\frac{N}{\alpha})}$, where $\alpha$ is regarded as $1$ if $\Omega$ satisfies $(\Omega_4)$.
\end{prop}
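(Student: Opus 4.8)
The plan is to produce the upper bound by evaluating the Rayleigh quotient defining $G_a$ on the same family of concentrating bumps used in the proof of Theorem \ref{Thm cusp}, while this time tracking the dependence of the logarithmic weight on $a$. In the cuspidal case $(\Omega_5)$ I set $x_\ep=(0,\dots,0,-R+2\ep)$; by $(\Omega_3)$ the ball $B_{A\ep^{1/\alpha}}(x_\ep)$ lies in $\Omega$ for $A>0$ and $\ep>0$ small, so I take
\begin{align*}
w_\ep(x)=v\(\frac{|x-x_\ep|}{A\ep^{\frac1\alpha}}\),
\end{align*}
with $v$ the cut-off from the proof of Proposition \ref{G_1 positive general}. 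In the Lipschitz case $(\Omega_4)$ I do the same at the corresponding Lipschitz contact point of $\pd\Omega$ with $\pd B_R(0)$, taking radius proportional to $\ep$, i.e. $\alpha=1$. In either case ${\rm supp}\,w_\ep$ sits at distance comparable to $\ep$ from $\pd B_R(0)$, so that $|x|\ge R-C\ep$, whence $|x|^{-N}\ge R^{-N}$ on the support.

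Next come the two estimates. For the numerator, scale invariance of the $N$-Dirichlet energy gives $\intO|\nabla w_\ep|^N\,dx=\int_{B_1(0)}|\nabla v|^N\,dz=:K$, a constant independent of $\ep$ and $a$. For the denominator I would use $\log\frac{aR}{|x|}=\log a+\log\frac{R}{|x|}\le \log a+C\ep$ on the support, together with $\int |w_\ep|^q\,dx\ge c\(A\ep^{1/\alpha}\)^{N}$ coming from $w_\ep\equiv1$ on $B_{\frac12 A\ep^{1/\alpha}}(x_\ep)$, to obtain
\begin{align*}
\intO |w_\ep|^q f_{a,\beta}(x)\,dx\ge C\,\ep^{\frac{N}{\alpha}}\(\log a+C\ep\)^{-\beta}.
\end{align*}
Combining the two yields
\begin{align*}
G_a\le \frac{K}{\(\intO |w_\ep|^q f_{a,\beta}(x)\,dx\)^{\frac{N}{q}}}\le C\,\ep^{-\frac{N^2}{q\alpha}}\(\log a+C\ep\)^{\frac{N\beta}{q}}.
\end{align*}

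Finally I would optimize in $\ep$. Choosing $\ep=\log a$ (admissible since $\ep\to0$ as $a\searrow1$) makes $\log a+C\ep$ comparable to $\log a$, and the right-hand side collapses to $C(\log a)^{\frac{N}{q}(\beta-\frac{N}{\alpha})}$. Since $0<\log a\le a-1$ and the exponent $\frac{N}{q}(\beta-\frac N\alpha)$ is positive under the hypotheses ($\alpha>\frac N\beta$ for $(\Omega_5)$, and $\alpha=1,\ \beta>N$ for $(\Omega_4)$), this gives the claimed bound $G_a\le C(a-1)^{\frac{N}{q}(\beta-\frac N\alpha)}$ for $a$ close to $1$.

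The main obstacle is twofold: the geometric step of fitting a ball of radius $\sim\ep^{1/\alpha}$ inside $\Omega$ at distance $\sim\ep$ from the sphere — which is exactly what the cusp exponent in $(\Omega_3)$, respectively the Lipschitz condition, guarantees — and identifying the correct balancing scale $\ep\sim\log a\sim a-1$ at which the concentration cost $\ep^{-N^2/(q\alpha)}$ and the weight growth $(\log a+C\ep)^{N\beta/q}$ trade off; note it is precisely here that one uses that the singularity of $f_{a,\beta}$ lies at the origin while, near $\pd B_R(0)$, the weight is of size $(\log a+C\ep)^{-\beta}$ and blows up only as $a\searrow1$. Everything else reduces to the routine volume and energy computations already carried out for $w_\ep$ in the proof of Theorem \ref{Thm cusp}.
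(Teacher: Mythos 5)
Your proposal is correct and takes essentially the same route as the paper: the paper's proof also tests the Rayleigh quotient on a cut-off bump $\phi_a(x)=\phi\bigl(\frac{x-x_a}{c(a-1)^{1/\alpha}}\bigr)$ supported in a ball of radius $c(a-1)^{1/\alpha}$ inscribed in the cusp (resp.\ Lipschitz corner) at distance of order $R(a-1)$ from $\pd B_R(0)$, where $\log\frac{aR}{|x|}\lesssim a-1$, giving the denominator bound $C(a-1)^{\frac{N}{\alpha}-\beta}$ and hence the claim. The only cosmetic difference is that you keep the concentration scale $\ep$ free and optimize, recovering $\ep\sim\log a\sim a-1$, which is exactly the scale the paper fixes from the outset.
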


\begin{proof}[Proof of Proposition \ref{prop cusp}]
Let $x_a = R(2-a)\frac{x_0}{|x_0|}$ and $\phi \in C_c^{\infty}(B_1(0))$. Here $x_0$ is regarded as $(0, \cdots, 0,-R)$ if $\Omega$ satisfies $(\Omega_5)$. Then $B_{c(a-1)^{\frac{1}{\alpha}}}(x_a) \subset \Omega$ for $a$ close to $1$ and for sufficiently small $c>0$. Set $\phi_{a}(x)=\phi\( \frac{x-x_a}{c(a-1)^{\frac{1}{\alpha}}} \)$. 
Since $\log \frac{1}{t} \le \frac{1-t}{2}$ for $t$ close to $1$, we have the followings for $a$ close to $1$.
\begin{align*}
&\intO | \nabla \phi_a |^N \,dx = \int_{B_1(0)} |\nabla \phi |^N \,dz < \infty, \\
&\intO \frac{|\phi_a |^q}{|x|^N \( \log \frac{aR}{|x|} \)^{\beta} } dx 
\ge c^N (a-1)^{\frac{N}{\alpha}} \| \phi \|^q_{L^q} \( \log \frac{aR}{R(2-a) - c(a-1)^{\frac{1}{\alpha}}} \)^{-\beta} \ge C(a-1)^{\frac{N}{\alpha}-\beta}.
\end{align*}
\end{proof}

\begin{proof}[{\bf Proof of Theorem \ref{Thm general}}]
(i) We can check that Lemma \ref{emb} holds true for any bounded domains $\Omega$. Therefore (i) follows from the compactness of the embedding $W_0^{1,N}(\Omega) \hookrightarrow L^q(\Omega; f_{a, \, \beta}(x) dx)$. We omit the proof.

\noindent
(ii) Note that $W_{0,{\rm rad}}^{1,N}(B_\ep(0)) \subset W_{0}^{1,N}(\Omega)  \subset W_{0}^{1,N}(B_R(0))$ for small $\ep$ by zero extension. Then we have
\begin{align}\label{zero extension}
\inf_{u \in W_{0,{\rm rad}}^{1,N}(B_\ep (0) ) \setminus \{ 0\} } \dfrac{\intO | \nabla u |^N \,dx}{\( \intO \frac{|u|^q}{|x|^N (\log \frac{aR}{|x|})^{\beta}} dx \)^{\frac{N}{q}}}
\ge G_a 
&\ge \inf_{u \in W_0^{1,N}(B_R(0) ) \setminus \{ 0\} } \dfrac{\intO | \nabla u |^N \,dx}{\( \intO \frac{|u|^q}{|x|^N (\log \frac{aR}{|x|})^{\beta}} dx \)^{\frac{N}{q}}} \notag \\
&=\inf_{u \in W_{0, {\rm rad}}^{1,N}(B_R(0) ) \setminus \{ 0\} } \dfrac{\intO | \nabla u |^N \,dx}{\( \intO \frac{|u|^q}{|x|^N (\log \frac{aR}{|x|})^{\beta}} dx \)^{\frac{N}{q}}},
\end{align}
where the last equality comes from $a \ge e^{\frac{\beta}{N}}$. From the proof of Proposition \ref{G_rad} (ii), we can observe that $G_{\rm rad}$ does not vary even if we replace $W_{0,{\rm rad}}^{1,N}(B_R(0))$ to $W_{0,{\rm rad}}^{1,N}(B_\ep(0))$ for any small $\ep >0$. Thus the right hand side and the left hand side of (\ref{zero extension}) take same value, that is $G_{\rm rad}$. Therefore we have $G_a =G_{\rm rad}$. Furthermore if we assume that $G_a$ is attained by $u \in W_0^{1,N}(\Omega)$, then $u \in W_0^{1,N}(B_R(0))$ is also a minimizer on a ball. This contradicts Theorem \ref{Thm ball} (ii) in \S 2. Hence $G_a$ is not attained for any bounded domains $\Omega$.

\noindent
(iii) Note that $G_a$ is continuous with respect to $a \in (1,\infty)$, and is monotone increasing with respect to $a \in [1,\infty)$ for any bounded domains. 
From Lemma \ref{lemma cusp} and Theorem \ref{Thm general} (ii), we can show that there exists $a_* \in (1, e^{\frac{\beta}{N}}]$ such that $G_a < G_{\rm rad}$ for $a \in (1,a_*)$ and $G_a =G_{\rm rad}$ for $a > a_*$ in the same way as the proof of Theorem \ref{Thm ball} (ii). 
The remaining parts of the proof are similar to the proof of Theorem \ref{Thm ball} (ii).
\end{proof}

\section{Symmetry breaking}\label{sym break}

In this section, we consider radially symmetry of the minimizers of $G_a$ when $\Omega = B_R(0)$. 
We can show that any minimizer of $G_a$ has axial symmetry by using {\it spherical symmetric rearrangement}, see \cite{K}.  
Namely, for any minimizer $u_\beta$ of $G_a$ there exists some $\xi \in \mathbb{S}^{N-1}$ such that the restriction of $u_\beta$ to any sphere $\pd B_r(0)$ is symmetric decreasing with respect to the distance to $r \,\xi$. See also \cite{SW}.
The last result is as follows.

\begin{theorem}\label{Thm sym}
Let $\beta > \frac{N-1}{N} q +1, a>1$, and $u_\beta$ be a minimizer of $G_a$ in Theorem \ref{Thm ball} (i). Then the followings hold true.

\noindent
(i) For fixed $q > N$, there exists $\beta_*$ such that $u_\beta$ is non-radial for $\beta > \beta_*$. 

\noindent
(ii) $u_\beta$ is radial for any $\beta$ and $q \le N$.
\end{theorem}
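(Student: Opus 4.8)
The plan is to deduce (i) from the strict inequality $G_a<G_{a,{\rm rad}}$, which I claim holds for all sufficiently large $\beta$. Indeed, every radial test function has Rayleigh quotient at least $G_{a,{\rm rad}}$, so once $G_a<G_{a,{\rm rad}}$ the minimizer $u_\beta$ produced in Theorem \ref{Thm ball} (i) cannot be radial. I would obtain this inequality by comparing the growth in $\beta$ of an upper bound for $G_a$ with a lower bound for $G_{a,{\rm rad}}$; both bounds will carry the same factor $(\log a)^{\frac{N}{q}\beta}$, which cancels, so the decisive quantity is the power of $\beta$. Throughout set $\ell=\frac{R\log a}{\beta}$, the natural boundary-layer scale.

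For the upper bound I would use a boundary bubble. Fix $V\in C_c^\infty(B_1(0))\setminus\{0\}$ and $\xi\in\mathbb{S}^{N-1}$, and put $w_\beta(x)=V\(\frac{x-x_\beta}{\ell}\)$ with $x_\beta=(R-2\ell)\xi$, so that ${\rm supp}\,w_\beta\subset B_R(0)$ for large $\beta$. Since $N$ is the critical exponent the Dirichlet integral is scale invariant, $\int_{B_R(0)}|\nabla w_\beta|^N\,dx=\int_{B_1(0)}|\nabla V|^N\,dz$. On ${\rm supp}\,w_\beta$ one has $\log\frac{aR}{|x|}=\log a\,(1+O(\beta^{-1}))$, hence $(\log\frac{aR}{|x|})^{-\beta}\asymp(\log a)^{-\beta}$ and $\int_{B_R(0)}|w_\beta|^q f_{a,\beta}\,dx\asymp(\log a)^{-\beta}\ell^{N}$. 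Therefore $G_a\le C\,(\log a)^{\frac{N}{q}\beta}\,\ell^{-\frac{N^2}{q}}=C'(\log a)^{\frac{N}{q}\beta}\,\beta^{\frac{N^2}{q}}$.

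For the lower bound I would pass to the variable $t=\log\frac{aR}{r}$; for radial $u$ with $v(t)=u(r)$ the quotient equals $\w_{N-1}^{1-\frac Nq}\,\frac{\int_{\log a}^{\infty}|v'|^N\,dt}{(\int_{\log a}^{\infty}t^{-\beta}|v|^q\,dt)^{\frac Nq}}$ with $v(\log a)=0$. Bounding $|v(t)|\le(t-\log a)^{\frac{N-1}{N}}\|v'\|_{L^N(\log a,\infty)}$ by Hölder and inserting this gives $G_{a,{\rm rad}}\ge\w_{N-1}^{1-\frac Nq}I(\beta)^{-\frac Nq}$, where $I(\beta)=\int_{\log a}^\infty t^{-\beta}(t-\log a)^{\frac{N-1}{N}q}\,dt$ is a Beta-type integral with asymptotics $I(\beta)\sim\Gamma(\tfrac{N-1}{N}q+1)\,(\log a)^{\frac{N-1}{N}q+1-\beta}\,\beta^{-\frac{N-1}{N}q-1}$. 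This yields $G_{a,{\rm rad}}\ge c\,(\log a)^{\frac Nq\beta}\,\beta^{N-1+\frac Nq}$ for large $\beta$. Comparing the two bounds, the factors $(\log a)^{\frac Nq\beta}$ cancel and the ratio is controlled by $\beta^{(N-1+\frac Nq)-\frac{N^2}{q}}=\beta^{(N-1)\frac{q-N}{q}}$, which tends to $+\infty$ exactly because $q>N$. Hence there is $\beta_*$ with $G_a<G_{a,{\rm rad}}$ for $\beta>\beta_*$, and (i) follows. The crux here is the lower bound for $G_{a,{\rm rad}}$: the bubble upper bound is elementary, but one must pin down the $\beta$-growth of $G_{a,{\rm rad}}$ through the Beta-integral asymptotics to be sure it dominates the bubble energy.

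For (ii) I would show that for $q\le N$ any $u\in W_0^{1,N}(B_R(0))$, which we may take nonnegative, can be radialized without raising its Rayleigh quotient. Writing $u=u(r,\w)$ and $m(r)=\(\int_{\mathbb{S}^{N-1}}|u(r,\w)|^q\,dS_\w\)^{1/q}$, set $\bar u(r)=\w_{N-1}^{-1/q}m(r)$. Because $f_{a,\beta}$ is radial, the weighted term is preserved, $\int_{B_R(0)}|\bar u|^q f_{a,\beta}\,dx=\int_{B_R(0)}|u|^q f_{a,\beta}\,dx$. For the gradient, first discard the angular part, $\int_{B_R(0)}|\nabla u|^N\,dx\ge\int_0^R\!\!\int_{\mathbb{S}^{N-1}}|\pd_r u|^N r^{N-1}\,dS_\w dr$, with equality iff $\nabla_\w u\equiv0$. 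Differentiating $m$ and using Hölder on $\mathbb{S}^{N-1}$ gives $|m'(r)|\le\|\pd_r u(r,\cdot)\|_{L^q(\mathbb{S}^{N-1})}$, while the inequality $\|g\|_{L^q(\mathbb{S}^{N-1})}\le\w_{N-1}^{\frac1q-\frac1N}\|g\|_{L^N(\mathbb{S}^{N-1})}$, valid precisely because $q\le N$, upgrades this to $\w_{N-1}|\bar u'(r)|^N\le\int_{\mathbb{S}^{N-1}}|\pd_r u|^N\,dS_\w$. Integrating in $r$ yields $\int_{B_R(0)}|\nabla\bar u|^N\,dx\le\int_{B_R(0)}|\nabla u|^N\,dx$, so $\bar u$ is at least as good a competitor. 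Applying this to the minimizer $u_\beta$ forces equality everywhere; equality in the discarding step gives $\nabla_\w u_\beta\equiv0$, so $u_\beta$ is radial. The only delicate point is the differentiation of $m$ near the zero set of $u$, which I would handle by a standard approximation (e.g. replacing $|u|$ by $\sqrt{u^2+\delta^2}$ and letting $\delta\to0$).
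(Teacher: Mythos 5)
Your proposal is correct, and its overall architecture coincides with the paper's: part (i) is reduced to the strict inequality $G_a<G_{a,{\rm rad}}$ for large $\beta$, obtained by comparing a boundary-bubble upper bound $G_a\le C\beta^{\frac{N^2}{q}}(\log a)^{\frac{N\beta}{q}}$ (your scale $\ell=R\log a/\beta$ versus the paper's $\beta^{-1}$ is immaterial) with a radial lower bound of order $\beta^{N-1+\frac Nq}(\log a)^{\frac{N\beta}{q}-(N-1+\frac Nq)}$, and the decisive exponent gap $(N-1+\frac Nq)-\frac{N^2}{q}=(N-1)\frac{q-N}{q}>0$ is exactly the paper's. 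Where you genuinely diverge is in the proofs of the two key ingredients. For the lower bound (the paper's Lemma \ref{Lemma GR}), the paper changes variables by $(\log a)^{A-1}\log\frac{aR}{s}=\bigl(\log\frac{aR}{r}\bigr)^A$ with $A=\frac{N(\beta-1)}{(N-1)q}$, reducing to the scale-invariant exponent $\beta=\frac{N-1}{N}q+1$ and invoking the known positive constant $G_{\rm rad}$ of Proposition \ref{G_rad}; you instead pass to $t=\log\frac{aR}{r}$, use the pointwise H\"older bound $|v(t)|\le(t-\log a)^{\frac{N-1}{N}}\|v'\|_{L^N}$, and read off the growth from the Beta-integral asymptotics $\Gamma(\beta-\sigma-1)/\Gamma(\beta)\sim\beta^{-\sigma-1}$ with $\sigma=\frac{N-1}{N}q$; your bookkeeping is right ($\frac Nq(\sigma+1)=N-1+\frac Nq$, and the $\log a$ exponents match Lemma \ref{Lemma GR} exactly). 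This is more self-contained — it needs neither Proposition \ref{G_rad} nor the transformation machinery — at the cost of an anonymous constant. For (ii), the paper radializes via the $L^N$-spherical mean $U(r)=\bigl(\w_{N-1}^{-1}\int_{S^{N-1}}|u|^NdS_\w\bigr)^{1/N}$, which lowers the Dirichlet integral and raises the weighted $L^q$ term by Jensen (this is where $q\le N$ enters); your $L^q$-mean is the mirror image: it preserves the weighted term exactly and spends $q\le N$ instead on the sphere embedding $\|g\|_{L^q(S^{N-1})}\le\w_{N-1}^{\frac1q-\frac1N}\|g\|_{L^N(S^{N-1})}$. Both versions conclude radiality of the minimizer from equality in the step discarding the angular gradient, as in the paper's display (\ref{rad non-rad}), and your remark on the absolute continuity of $r\mapsto\|u(r,\cdot)\|_{L^q}$ addresses a point the paper passes over silently for $U$.
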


In order to show Theorem \ref{Thm sym} (i), we need two lemmas concerning growth orders of $G_a$ and $G_{a, {\rm rad}}$ with respect to $\beta$.

\begin{lemma}\label{Lemma G}
For fixed $q > N$, there exists $C>0$ such that for sufficiently large $\beta$ the following estimate holds true.
\begin{align*}
G_a \le C \beta^{\frac{N^2}{q}}(\log a)^{\frac{N \beta}{q}}.
\end{align*}
\end{lemma}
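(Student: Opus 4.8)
The plan is to bound $G_a$ from above by simply evaluating the Rayleigh quotient on a cheap explicit test function, so that neither compactness nor the existence theory is needed. The guiding observation is twofold. First, on $B_R(0)$ the weight $f_{a,\beta}(x)=|x|^{-N}(\log\frac{aR}{|x|})^{-\beta}$ is comparable to $R^{-N}(\log a)^{-\beta}$ near the boundary sphere $\pd B_R(0)$, since there $\log\frac{aR}{|x|}\approx\log a$, and this is exactly where the factor $(\log a)^{-\beta}$ is harvested. Second, the Dirichlet energy $\int|\nabla u|^N$ is invariant under the scaling $u\mapsto u(\cdot/\ep)$ in dimension $N$, so shrinking a bump costs nothing in the numerator. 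I therefore place a fixed bump near the boundary and let its radius shrink with $\beta$; this is the same boundary-concentration mechanism as in Proposition \ref{prop cusp}, here with the Lipschitz exponent $\alpha=1$, but now tracking the dependence on $\beta$ rather than on $a$.

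Concretely, I would fix $\phi\in C_c^\infty(B_1(0))$ with $\phi\not\equiv 0$, choose a unit vector $\xi$, set $x_\ep=(R-2\ep)\xi$, and take $\phi_\ep(x)=\phi\(\frac{x-x_\ep}{\ep}\)\in W_0^{1,N}(B_R(0))$, which is admissible because $B_\ep(x_\ep)\subset B_R(0)$ for small $\ep$. The numerator is exactly $\int_{B_R(0)}|\nabla\phi_\ep|^N\,dx=\|\nabla\phi\|_{L^N}^N$, independent of $\ep$. For the denominator, on the support one has $R-3\ep\le|x|\le R-\ep$, whence $|x|^{-N}\ge R^{-N}$ and $\log\frac{aR}{|x|}\le\log a+c_0\ep/R$ for small $\ep$, so that
\begin{align*}
\int_{B_R(0)}|\phi_\ep|^q f_{a,\beta}\,dx\ge R^{-N}\(\log a+c_0\ep/R\)^{-\beta}\ep^N\|\phi\|_{L^q}^q .
\end{align*}
Inserting these into the quotient gives $G_a\le C'\,\ep^{-N^2/q}\(\log a+c_0\ep/R\)^{\beta N/q}$ with $C'$ independent of $\ep$ and $\beta$.

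The crux is the choice of $\ep$ together with the control of the $\beta$-dependence of the logarithmic weight. Taking $\ep=R\log a/\beta$ makes $\log a+c_0\ep/R=\log a\,(1+c_0/\beta)$, so that $\(\log a+c_0\ep/R\)^{\beta N/q}\le(\log a)^{\beta N/q}e^{c_0N/q}$ remains within a fixed multiple of $(\log a)^{\beta N/q}$ as $\beta\to\infty$; this cancellation is the heart of the argument and is precisely what forces the scale $\ep\sim\log a/\beta$. The remaining factor $\ep^{-N^2/q}=(R\log a/\beta)^{-N^2/q}$ then produces exactly $\beta^{N^2/q}$, and absorbing all $\beta$-independent quantities (depending only on $N,q,R,a,\phi$) into one constant yields $G_a\le C\beta^{N^2/q}(\log a)^{N\beta/q}$. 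The only delicate point, and the one I expect to be the main obstacle, is making the approximation $\log\frac{aR}{|x|}=\log a+O(\ep/R)$ and the bound $(1+c_0/\beta)^{\beta}\le e^{c_0}$ uniform and valid only for large $\beta$ (equivalently small $\ep$) — which is exactly the ``sufficiently large $\beta$'' regime in the statement.
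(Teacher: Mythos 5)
Your proposal is correct and is essentially the paper's own argument: the paper likewise tests the quotient with a bump of radius $\sim\beta^{-1}$ centered at distance $\sim\beta^{-1}$ from $\pd B_R(0)$ (namely $u_\beta(x)=u(\beta(x-x_\beta))$ with $x_\beta=(R-\beta^{-1},0,\dots,0)$), uses the scale invariance of $\int|\nabla u|^N dx$ in dimension $N$, and recovers $(\log a)^{-\beta}$ from $\bigl(\log a+O(\beta^{-1})\bigr)^{-\beta}$ via $\log\frac{1}{1-x}\le 2x$ and the uniform bound $(1+c/\beta)^{-\beta}\ge e^{-c}$, exactly your cancellation. The only cosmetic difference is your choice $\ep=R\log a/\beta$ in place of the paper's $\ep=\beta^{-1}$; any $\ep\le C/\beta$ works for fixed $a>1$, so the scale is not literally forced to carry the factor $\log a$ as you suggest, but this does not affect correctness.
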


\begin{proof}[{\bf Proof of Lemma \ref{Lemma G}}]
Let $u \in C_c^{\infty}(B_R(0))$. Following \cite{SSW} we consider $u_\beta (x):=u( \beta (x-x_\beta ) )$ for $x \in B_{\beta^{-1}}(x_\beta)$, where $x_\beta :=(R-\beta^{-1}, 0, \cdots, 0) \in B_R(0)$. Then for sufficiently large $\beta$ we obtain 
\begin{align}\label{nume}
\int_{B_{\beta^{-1}}(x_\beta)} |\nabla u_\beta (x)|^N dx &= \int_{B_R(0)} |\nabla u (y)|^N dy, \\
\label{deno}
\int_{B_{\beta^{-1}}(x_\beta )} \dfrac{|u_\beta (x)|^q}{|x|^N (\log \frac{aR}{|x|})^\beta} dx &\ge 
\( R-2\beta^{-1} \)^{-N} \( \log \frac{aR}{R-2\beta^{-1}} \)^{-\beta} \beta^{-N} \int_{B_R(0)} |u(y)|^q dy.
\end{align}
We set $f(\beta):=( R-2\beta^{-1} )^{-N} ( \log \frac{aR}{R-2\beta^{-1}} )^{-\beta}$. Since $\log \frac{1}{1-x} \le 2x$ for all $x \in [0,\frac{1}{2}]$, for large $\beta$ we have
\begin{align*}
f(\beta ) &\ge \frac{1}{2} \( \log a + \log \frac{1}{1-2 \beta^{-1} R^{-1}} \)^{-\beta} \\
&\ge \frac{1}{2} \(\log a + 4\beta^{-1} R^{-1} \)^{-\beta} \\
&= \frac{1}{2} (\log a )^{-\beta} \( 1+ \frac{4}{\beta R\log a} \)^{-\beta} 
\end{align*}
which yields that
\begin{align}\label{f beta}
f(\beta ) \ge C (\log a )^{-\beta} \quad \text{for large} \,\, \beta.
\end{align}
From (\ref{nume}), (\ref{deno}), and (\ref{f beta}), we obtain
\begin{align*}
G_a \le \dfrac{\int_{B_R(0)} |\nabla u_\beta |^N dx}{ \( \int_{B_R(0)} \frac{|u_\beta |^q}{|x|^N (\log \frac{aR}{|x|})^\beta} dx\)^{\frac{N}{q}}} \le C \beta^{\frac{N^2}{q}}(\log a)^{\frac{N\beta}{q}}.
\end{align*}
\end{proof}

\begin{lemma}\label{Lemma GR}
For fixed $q > N$, there exists $C>0$ such that for sufficiently large $\beta$ the following estimate holds true.
\begin{align*}
G_{a, {\rm rad}} \ge C \beta^{N-1+\frac{N}{q}} \( \log a \)^{\frac{N\beta}{q} -(N-1+\frac{N}{q})}.
\end{align*}
\end{lemma}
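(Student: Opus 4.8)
The plan is to reduce the radial problem $G_{a,{\rm rad}}$ to a one-dimensional weighted minimization problem by the logarithmic change of variables used in \cite{ST}, \cite{HK}, and then to read off the growth in $\beta$ from a Beta-function estimate. First I would pass to polar coordinates and set $t = \log \frac{aR}{r}$, so that $r = R$ corresponds to $t = \log a$ and $r \to 0$ to $t \to \infty$. Writing $v(t) = u(r)$, a radial $u \in W^{1,N}_{0,{\rm rad}}(B_R(0))$ corresponds to a function $v$ on $(\log a, \infty)$ with $v(\log a) = 0$ and $\dot v \in L^N(\log a, \infty)$, and a direct computation (using $\frac{dr}{r} = -dt$ and $|\nabla u|^N r^{N-1} = |\dot v|^N r^{-1}$) gives $\int_{B_R(0)} |\nabla u|^N\,dx = \w_{N-1} \int_{\log a}^\infty |\dot v|^N\,dt$ together with $\int_{B_R(0)} \frac{|u|^q}{|x|^N (\log \frac{aR}{|x|})^\beta}\,dx = \w_{N-1} \int_{\log a}^\infty \frac{|v|^q}{t^\beta}\,dt$. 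Hence
\begin{align*}
G_{a,{\rm rad}} = \w_{N-1}^{1-\frac{N}{q}} \inf_{v(\log a)=0} \frac{\int_{\log a}^\infty |\dot v|^N\,dt}{\( \int_{\log a}^\infty |v|^q\, t^{-\beta}\,dt \)^{\frac{N}{q}}}.
\end{align*}

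The key step is a single pointwise estimate coming from the boundary condition $v(\log a) = 0$: by the fundamental theorem of calculus and H\"older's inequality, $|v(t)| \le (t - \log a)^{\frac{N-1}{N}} A^{\frac{1}{N}}$ for every $t > \log a$, where $A := \int_{\log a}^\infty |\dot v|^N\,dt$. Inserting this into the denominator and writing $\mu := \frac{N-1}{N}q + 1$ gives
\begin{align*}
\int_{\log a}^\infty \frac{|v|^q}{t^\beta}\,dt \le A^{\frac{q}{N}} \int_{\log a}^\infty \frac{(t-\log a)^{\mu -1}}{t^\beta}\,dt = A^{\frac{q}{N}} (\log a)^{\mu - \beta} B(\mu, \beta - \mu),
\end{align*}
where the last equality follows from the substitution $t = (\log a)(1+\sigma)$ and is valid because $\beta > \mu$. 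Combining the two displays, the Rayleigh quotient is bounded below by $(\log a)^{\frac{N}{q}(\beta - \mu)} B(\mu, \beta - \mu)^{-\frac{N}{q}}$ for every admissible $v$, hence the same bound holds for $G_{a,{\rm rad}}\,\w_{N-1}^{-(1-\frac{N}{q})}$.

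It then remains only to insert the asymptotics of the Beta function. Since $\frac{N}{q}\mu = N - 1 + \frac{N}{q}$, the power of $\log a$ is exactly $\frac{N\beta}{q} - (N-1+\frac{N}{q})$, matching the statement. For the power of $\beta$, Stirling's formula gives $B(\mu, \beta-\mu) = \frac{\Gamma(\mu)\Gamma(\beta-\mu)}{\Gamma(\beta)} \le C \beta^{-\mu}$ for large $\beta$, with $C$ depending only on $N$ and $q$; hence $B(\mu,\beta-\mu)^{-\frac{N}{q}} \ge C \beta^{\frac{N}{q}\mu} = C \beta^{N-1+\frac{N}{q}}$, and the claimed lower bound follows after absorbing $\w_{N-1}^{1-\frac{N}{q}}$ and $\Gamma(\mu)$ into the constant. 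I expect the only genuinely delicate point to be the uniform-in-$\beta$ control of $B(\mu,\beta-\mu)$, i.e.\ pinning its decay to precisely $\beta^{-\mu}$ via Stirling; the radial reduction and the H\"older estimate are routine, and the exponent bookkeeping is exactly what forces the two matching powers appearing in the statement.
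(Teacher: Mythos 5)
Your proof is correct, but it takes a genuinely different route from the paper's. The paper argues by a change of variables inside the ball: for radial $u$ it sets $v(s)=u(r)$ with $(\log a)^{A-1}\log\frac{aR}{s}=(\log\frac{aR}{r})^{A}$ and $A=\frac{N(\beta-1)}{(N-1)q}$, which maps the $\beta$-weighted quotient onto the critical one with exponent $\frac{N-1}{N}q+1$, at the cost of the explicit factor $A^{N-1+\frac{N}{q}}(\log a)^{\frac{N}{q}(\beta-1)\frac{A-1}{A}}$ (after discarding a weight $(\log\frac{aR}{|y|})^{\frac{A-1}{A}}(\log a)^{-\frac{A-1}{A}}\ge 1$ in the numerator, valid once $A\ge 1$, i.e.\ for large $\beta$); it then invokes the positivity of the critical constant from Proposition~\ref{G_rad}, and the stated growth follows since $A\sim c\beta$ and $\frac{N}{q}(\beta-1)\frac{A-1}{A}=\frac{N\beta}{q}-(N-1+\frac{N}{q})$. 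You instead reduce to a one-dimensional problem via $t=\log\frac{aR}{r}$, use the fundamental theorem of calculus and H\"older to get $|v(t)|\le (t-\log a)^{\frac{N-1}{N}}\|\dot v\|_{L^N}$ (this is exactly where the boundary condition $v(\log a)=0$ enters), and evaluate $\int_{\log a}^{\infty}(t-\log a)^{\mu-1}t^{-\beta}\,dt=(\log a)^{\mu-\beta}B(\mu,\beta-\mu)$ exactly, with $\mu=\frac{N-1}{N}q+1$ and Stirling giving $B(\mu,\beta-\mu)\le C\beta^{-\mu}$; your bookkeeping $\frac{N}{q}\mu=N-1+\frac{N}{q}$ reproduces both exponents, and since only the lower-bound direction of your identification of $G_{a,{\rm rad}}$ with the one-dimensional infimum is needed, the density/representative issues in the radial reduction are harmless. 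What each approach buys: the paper's argument is shorter given the transformation machinery already in place and ties the constant $C$ to the explicit critical-case constant $G_{\rm rad}$ of Proposition~\ref{G_rad}(i), whereas yours is self-contained (it never uses Proposition~\ref{G_rad}), re-proves the radial positivity of the embedding as a byproduct, and produces an explicit $C$ depending only on $N$ and $q$, uniformly in $a>1$. One small caveat for perspective rather than correctness: your H\"older step cannot be saturated for all $t$ simultaneously, so your one-dimensional constant is not the sharp one --- but sharpness is not needed for the lemma.
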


\begin{proof}[{\bf Proof of Lemma \ref{Lemma GR}}] 
For $u \in W_{0,\text{rad}}^{1,N}(B_R(0))$ we define $v \in W_{0,\text{rad}}^{1,N}(B_R(0))$ as follows:
\begin{align*}
v(s) = u(r), \,\, \text{where}\,\,  ( \log a )^{A-1} \log \frac{aR}{s} = \( \log \frac{aR}{r} \)^A \,\text{and}\, A=\frac{N(\beta -1)}{(N-1)q}.
\end{align*}
Direct calculation shows that
\begin{align*}
\int_{B_R(0)} \dfrac{|u |^q}{|x|^N (\log \frac{aR}{|x|})^\beta} dx 
&=\w_{N-1} \int_0^R |u(r)|^q \( \log \frac{aR}{r} \)^{-\beta} \frac{dr}{r} \\
%&=\w_2 \int_0^1 |v(s)|^q \( ( \log a)^{A-1} \log \frac{a}{s} \)^{-\frac{\beta}{A}} A^{-1} (\log a)^{\frac{A-1}{A}} \( \log \frac{a}{s} \)^{-\frac{A-1}{A}} \frac{ds}{s} \\
&=\w_{N-1} A^{-1} (\log a)^{\frac{A-1}{A} (1-\beta )} \int_0^R \dfrac{|v(s)|^q}{s (\log \frac{aR}{s})^{\frac{A-1+\beta}{A}}} ds \\
&= A^{-1} (\log a)^{\frac{A-1}{A} (1-\beta )} \int_{B_R(0)} \dfrac{|v |^q}{|y|^N (\log \frac{aR}{|y|})^{\frac{N-1}{N}q +1}} dy.
\end{align*}
In the same way as above, we have
\begin{align*}
\int_{B_R(0)} |\nabla u|^N dx &= A^{N-1} ( \log a)^{-\frac{A-1}{A}} \int_{B_R(0)} |\nabla v |^N  \( \log \frac{aR}{|y|}\)^{\frac{A-1}{A}} dy \\
&\ge A^{N-1} \int_{B_R(0)} |\nabla v |^N dy.
\end{align*}
Therefore we have 
\begin{align*}
\dfrac{\int_{B_R(0)} | \nabla u |^N \,dx}{\( \int_{B_R(0)} \frac{|u|^q}{|x|^N (\log \frac{aR}{|x|})^{\beta}} dx \)^{\frac{N}{q}}} 
\ge A^{N-1+\frac{N}{q}} (\log a)^{\frac{N}{q}(\beta -1) \frac{A-1}{A}} \dfrac{\int_{B_R(0)} | \nabla v |^N \,dy}{\( \int_{B_R(0)} \frac{|v|^q}{|y|^N (\log \frac{aR}{|y|})^{\frac{N-1}{N}q+1}} dy \)^{\frac{N}{q}}}
\end{align*}
which yields that 
\begin{align*}
G_{a, {\rm rad}} \ge \( \frac{N(\beta -1)}{(N-1) q} \)^{N-1+\frac{N}{q}} (\log a)^{\frac{N\beta}{q} -\( N-1+\frac{N}{q} \)} \inf_v \dfrac{\int_{B_R(0)} | \nabla v |^N \,dy}{\( \int_{B_R(0)} \frac{|v|^q}{|y|^N (\log \frac{aR}{|y|})^{\frac{N-1}{N}q+1}} dy \)^{\frac{N}{q}}}.
\end{align*}
Therefore, for sufficiently large $\beta$ we have 
\begin{align*}
G_{a, {\rm rad}} \ge C \beta^{N-1+\frac{N}{q}} \( \log a \)^{\frac{N\beta}{q} -(N-1+\frac{N}{q})}.
\end{align*}
\end{proof}

Finally we shall show Theorem \ref{Thm sym}.

\begin{proof}[{\bf Proof of Theorem \ref{Thm sym}}] 
(i) It is enough to show that $G_a < G_{a,{\rm rad}}$. 
By Lemma \ref{Lemma G} and Lemma \ref{Lemma GR}, for fixed $q >N$ there exists $\beta_*$ such that for $\beta > \beta_*$ 
\begin{align*}
G_a \le C \beta^{\frac{N^2}{q}}(\log a)^{\frac{N \beta}{q}} < C \beta^{N-1+\frac{N}{q}} (\log a)^{\frac{N\beta}{q} - (N-1+\frac{N}{q})} \le G_{a, {\rm rad}},
\end{align*}
since $\frac{N^2}{q} < N-1+\frac{N}{q}$. Therefore we see that $G_a < G_{a,{\rm rad}}$.

\noindent
(ii) Let $x=r \w \,(r=|x|, \w \in S^{N-1})$ for $x \in B_R(0)$. For $u \in W_0^{1,N}(B_R(0))$ we consider the following radial function $U$:
\begin{align*}
U(r)=\(\w_{N-1}^{-1} \int_{S^{N-1}} |u(r \w)|^{N} dS_{\w} \)^{\frac{1}{N}}.
\end{align*} 
Then we have
\begin{align*}
	U^{\,\prime}(r) \le \( \w_{N-1}^{-1} \int_{S^{N-1}} \left| \frac{\pd}{\pd r} u(r \w ) \right|^N dS_{\w} \)^{\frac{1}{N}}
\end{align*}
which yields that
\begin{equation}\label{rad non-rad}
	\int_{B_R(0)} | \nabla U |^N dx \le \int_{B_R(0)} \left| \nabla u \cdot \frac{x}{|x|} \right|^N dx \le \int_{B_R(0)} | \nabla u |^N dx.
\end{equation}
On the other hand, we have
\begin{align}
\int_{B_R(0)} \dfrac{|U|^q}{|x|^N (\log \frac{aR}{|x|})^{\beta}} dx 
&= \w_{N-1} \int_0^R \( \w_{N-1}^{-1} \int_{S^{N-1}} |u(r\w )|^N dS_{\w} \)^{\frac{q}{N}} \dfrac{dr}{r (\log \frac{aR}{|x|})^\beta} \nonumber \\
&\ge \int_0^R \int_{S^{N-1}} |u(r\w )|^q dS_{\w}  \dfrac{dr}{r (\log \frac{aR}{|x|})^\beta} \nonumber \\
\label{rad non-rad 2}
&=\int_{B_R(0)} \frac{|u|^q}{|x|^N (\log \frac{aR}{|x|})^{\beta}} dx
\end{align}
where the inequality follows from Jensen's inequality and $q \le N$.
From (\ref{rad non-rad}) and (\ref{rad non-rad 2}), we obtain $G_{a,{\rm rad}} \le G_a$. 
Therefore $G_{a, {\rm rad}} =G_a$ for any $q \le N$ and $\beta$. 
Moreover we observe that any minimizers of $G_a$ must be radial from the equality condition of (\ref{rad non-rad}).
\end{proof}

%%%%%%%%%%%%%%%%%%%%%%%%%%%%%%%%%%%%%%%%%%%%%%%%%%%%%%%%%%%%%%%%%%%%%%%%%%%%%%%%%%%%%%%%%%

\section*{Acknowledgment}
Part of this work was supported by JSPS Grant-in-Aid for Fellows (PD), No. 18J01053. 
The author would like to thank Professor Marta Calanchi (The University of Milan) for valuable advice and encouragement.
%She also would like to thank the referee for careful reading and constructive comments. 

%%%%%%%%%%%%%%%%%%%%%%%%%%%%%%%%%%%%%%%%%%%%%%%%%%%%%%%%%%%%%%%%%%%%


\begin{thebibliography}{99}
%\bibitem{ACR}
%Adimurthi, Chaudhuri, N., Ramaswamy, M., {\it An improved Hardy-Sobolev inequality and its application}, Proc. Amer. Math. Soc. \textbf{130} (2002), no. 2, 489-505 (electronic).

\bibitem{AE}
Adimurthi, Esteban, Maria J., {\it An improved Hardy-Sobolev inequality in $W^{1,p}$ and its application to Schrodinger operators}, NoDEA Nonlinear Differential Equations Appl. \textbf{12} (2005), no. 2, 243-263. 

\bibitem{AS}
Adimurthi, Sandeep, K. {\it Existence and non-existence of the first eigenvalue of the perturbed Hardy-Sobolev operator}, Proc. Roy. Soc. Edinburgh Sect. A \textbf{132} (2002), no. 5, 1021-1043.

%\bibitem{Boccardo-Murat}
%Boccardo, L., Murat, F., {\it Almost everywhere convergence of the gradients of solutions to elliptic and parabolic equations}, Nonlinear Anal. \textbf{19} (1992), no. 6, 581-597.

\bibitem{BG}
Baras, P., Goldstein, J. A., {\it The heat equation with a singular potential}, Trans. Amer. Math. Soc. \textbf{284} (1984), no. 1, 121-139.

\bibitem{BFT(IUMJ)}
Barbatis G., Filippas, S., and Tertikas, A.,
{\it Series expansion for $L^p$ Hardy inequalities},
\newblock Indiana Univ. Math. J.  52  (2003),  no. 1, 171-190. 

\bibitem{BFT(TAMS)}
Barbatis G., Filippas, S., and Tertikas, A.,
{\it A unified approach to improved $L^p$  Hardy inequalities with best constants}, 
\newblock Trans. Amer. Math. Soc. 356  (2004),  no. 6, 2169-2196.

%\bibitem{Bradley}
%Bradley, J., {\it Scott Hardy inequalities with mixed norms}, Canad. Math. Bull. \textbf{21} (1978), no. 4, 405-408. 

%\bibitem{Brezis-Lieb}
%H. Brezis, and E. Lieb, {\it A relation between pointwise convergence of functions and convergence of functionals}, 
%\newblock Proc. Amer. Math. Soc., {\bf 88} (1983), 486-490. 

%\bibitem{Brezis-Marcus}
%Brezis, H., Marcus, M., {\it Hardy's inequalities revisited}, %Dedicated to Ennio De Giorgi. 
%Ann. Scuola Norm. Sup. Pisa Cl. Sci. (4) 25 (1997), 217-237. 

\bibitem{BV}
Brezis, H., V\'{a}zquez, J.L., {\it Blow-up solutions of some nonlinear elliptic problems}, 
Rev. Mat. Univ. Complut. Madrid \textbf{10} (1997), no. 2, 443-469.

\bibitem{BT}
Byeon, J., Takahashi, F., {\it Hardy's inequality in a limiting case on general bounded domains}, arXiv:1707.04018.

%\bibitem{CKN}
%Caffarelli, L., Kohn, R., Nirenberg, L., {\it First order interpolation inequalities with weights}, Compositio Math. 53 (1984), no. 3, 259-275.

\bibitem{CM}
Caldiroli, P., Musina, R., {\it On the existence of extremal functions for a weighted Sobolev embedding with critical exponent}, Calc. Var. Partial Differential Equations 8 (1999), no. 4, 365-387.

\bibitem{CW1}
Catrina, F., Wang, Z.-Q., {\it On the Caffarelli-Kohn-Nirenberg inequalities: sharp constants, existence (and nonexistence), and symmetry of extremal functions}, Comm. Pure Appl. Math. 54 (2001), no. 2, 229-258.

\bibitem{CW2}
Catrina, F., Wang, Z.-Q., {\it Positive bound states having prescribed symmetry for a class of nonlinear elliptic equations in $\re^N$}, Ann. Inst. H. Poincare Anal. Non Lineaire 18 (2001), no. 2, 157-178.

%\bibitem{CR}
%Chaudhuri, N., Ramaswamy, M., {\it Existence of positive solutions of some semilinear elliptic equations with singular coefficients}, Proc. Roy. Soc. Edinburgh Sect. A \textbf{131} (2001), no. 6, 1275-1295.

\bibitem{CL}
Chern, J.-L., Lin, C.-S., {\it Minimizers of Caffarelli-Kohn-Nirenberg inequalities with the singularity on the boundary}, Arch. Ration. Mech. Anal. \textbf{197} (2010), no. 2, 401-432.

%\bibitem{CF}
%Cianchi, A., Ferone, A., {\it Hardy inequalities with non-standard remainder terms}, Ann.
%Inst. H. Poincar\'{e}. Anal. Nonlin\'{e}aire, \textbf{25}, (2008), 889-906.

\bibitem{CC}
Chou, K. S., Chu, C. W., {\it On the best constant for a weighted Sobolev-Hardy inequality}, J. London Math. Soc. (2) \textbf{48} (1993), no. 1, 137-151.

\bibitem{D}
DiBenedetto, E., {\it $C^{1+\alpha}$ local regularity of weak solutions of degenerate elliptic equations}, Nonlinear Anal. \textbf{7} (1983), No. 8, 827-850.

%\bibitem{FT}
%Filippas, S., Tertikas, A., {\it Optimizing improved Hardy inequalities}, 
%J. Funct. Anal. \textbf{192} (2002), 186-233.

%\bibitem{GGM}
%Gazzola, F., Grunau, H.C., Mitidieri, E., {\it Hardy inequalities with optimal constants and remainder terms},
%Trans. Amer. Math. Soc. \textbf{356} (2003), No.6, 2149-2168.

\bibitem{GK}
Ghoussoub, N., Kang, X. S., {\it Hardy-Sobolev critical elliptic equations with boundary singularities}, Ann. Inst. H. Poincare Anal. Non Lineaire \textbf{21} (2004), no. 6, 767-793.

\bibitem{GM book}
Ghoussoub, N., Moradifam, A., {\it Functional inequalities: New perspectives and new applications, Mathematical
Surveys and Monographs}, Vol. \textbf{187}, A. M. S., Providence, RI, 2013. ISBN 978-0-8218-9152-0.

%\bibitem{GR(Emden)}
%Ghoussoub, N., Robert, F., {\it Concentration estimates for Emden-Fowler equations with boundary singularities and critical growth}, IMRP Int. Math. Res. Pap. 2006, 21867, 1-85.

%\bibitem{GR(2009)}
%Ghoussoub, N., Robert, F., {\it Elliptic equations with critical growth and a large set of boundary singularities}, Trans. Amer. Math. Soc. \textbf{361} (2009), no. 9, 4843-4870.

%\bibitem{GR(2016)}
%Ghoussoub, N., Robert, F., {\it Sobolev inequalities for the Hardy-Schrodinger operator: extremals and critical dimensions}, Bull. Math. Sci. \textbf{6} (2016), no. 1, 89-144.

\bibitem{GR(2006)}
Ghoussoub, N., Robert, F., {\it The effect of curvature on the best constant in the Hardy-Sobolev inequalities}, Geom. Funct. Anal. \textbf{16} (2006), no. 6, 1201-1245.

%\bibitem{GY}
%Ghoussoub, N., Yuan, C., {\it Multiple solutions for quasi-linear PDEs involving the critical Sobolev and Hardy exponents}, Trans. Amer. Math. Soc. \textbf{352} (2000), no. 12, 5703-5743.

\bibitem{Hardy}
Hardy, G. H., {\it Note on a theorem of Hilbert}, Math. Z. \textbf{6} (1920), no. 3-4, 314-317.

\bibitem{Horiuchi}
Horiuchi, T., {\it Best constant in weighted Sobolev inequality with weights being powers of distance from the origin}, J. Inequal. Appl. 1 (1997), no. 3, 275-292.

\bibitem{H}
Horiuchi, T., {\it On the Caffarelli-Kohn-Nirenberg type inequalities and related topics (Japanese)}, S$\overline{{\rm u}}$gaku \textbf{68} (2016), no. 1, 1-23.

\bibitem{HK}
Horiuchi, T., Kumlin, P., {\it On the Caffarelli-Kohn-Nirenberg-type inequalities involving critical and supercritical weights}, Kyoto J. Math. \textbf{52} (2012), no. 4, 661-742.

\bibitem{II 1}
Ioku, N., Ishiwata, M., {\it A note on the scale invariant structure of critical Hardy inequalities}, Geometric properties for parabolic and elliptic PDE's, 97-120, Springer Proc. Math. Stat., \textbf{176}, Springer, (2016).

\bibitem{II}
Ioku, N., Ishiwata, M., {\it A Scale Invariant Form of a Critical Hardy Inequality}, Int. Math. Res. Not. IMRN (2015), no. 18, 8830-8846.

\bibitem{K}
Kawohl, B., {\it Rearrangements and convexity of level sets in PDE}, Lecture Notes in Mathematics, 1150. Springer-Verlag, Berlin, (1985). iv+136 pp.

\bibitem{Ladyzhenskaya}
Ladyzhenskaya, O.A., {\it The mathematical theory of viscous incompressible flow, Second edition, revised and enlarged}, Mathematics and its Applications, Vol. 2 Gordon and Breach, Science Publishers, New York-London-Paris, (1969).

\bibitem{Leray}
Leray, J., {\it Etude de diverses equations integrales non lineaires et de quelques problemes que pose l'hydrodynamique. (French)}, (1933), 82 pp.

%\bibitem{L phi}
%Lieb, E. H., {\it The stability of matter}, Rev. Modern Phys. \textbf{48} (1976), no. 4, 553-569. 

\bibitem{L}
Lieb, E. H., {\it Sharp constants in the Hardy-Littlewood-Sobolev and related inequalities}, Ann. of Math. (2) \textbf{118} (1983), no. 2, 349-374.

%\bibitem{Lieb-Loss}
%Lieb, E. H., Loss, M., {\it Analysis (second edition)}, Graduate Studies in Mathematics, 14, Amer. Math. Soc. Providence, RI, (2001), xxii+346 pp. 

%\bibitem{LW}
%Lin, C.-S., Wadade, H., {\it Minimizing problems for the Hardy-Sobolev type inequality with the singularity on the boundary}, Tohoku Math. J. (2) \textbf{64} (2012), no. 1, 79-103.

\bibitem{MOW(ref)}
Machihara, S., Ozawa, T., Wadade, H., {\it Generalizations of the logarithmic Hardy inequality in critical Sobolev-Lorentz spaces}, J. Inequal. Appl., (2013), 2013:381, 14 pp.

\bibitem{MOW}
Machihara, S., Ozawa, T., Wadade, H., {\it Hardy type inequalities on balls}, Tohoku Math. J. (2), \textbf{65} no.3, (2013), 321-330. 

%\bibitem{M}
%Moser, J., {\it A sharp form of an inequality by N. Trudinger}, Indiana Univ. Math. J. \textbf{20} (1970/71), 1077-1092. 

%\bibitem{Muckenhoupt}
%Muckenhoupt, B., {\it Hardy's inequality with weights}, Studia Math. \textbf{44} (1972), 31-38.

\bibitem{PS}
Pucci, P., Serrin, J., {\it The maximum principle}, Progress in Nonlinear Differential Equations and their Applications, \textbf{73}, Birkhauser Verlag, Basel, (2007).

%\bibitem{S}
%Sano, M., {\it Scaling invariant Hardy type inequalities with non-standard remainder terms}, Math. Inequal. Appl. \textbf{21} (2018), no. 1, 77-90.

\bibitem{ST}
Sano, M., Takahashi, F., {\it Scale invariance structures of the critical and the subcritical Hardy inequalities and their improvements}, Cal. Var. PDEs, \textbf{56} (2017), no. 3, 56-69.

\bibitem{ST(EJDE)}
Sano, M., Takahashi, F., {\it Sublinear eigenvalue problems with singular weights related to the critical Hardy inequality}, Electron. J. Diff. Equ., Vol. \textbf{2016} (2016), No. 212, pp. 1-12.

\bibitem{SW}
Smets, D., Willem, M., {\it Partial symmetry and asymptotic behavior for some elliptic variational problems}, Calc. Var. Partial Differential Equations \textbf{18} (2003), no. 1, 57-75. 

\bibitem{SSW}
Smets, D., Willem, M., Su, J., {\it Non-radial ground states for the Henon equation}, Commun. Contemp. Math. 4 (2002), no. 3, 467-480.

%\bibitem{Stru}
%Struwe, M. {\it Critical points of embeddings of $H_0^{1,n}$ into Orlicz spaces}, Ann. Inst. H. Poincare Anal. Non Lineaire \textbf{5} (1988), no. 5, 425-464.

\bibitem{TF}
Takahashi, F., {\it A simple proof of Hardy's inequality in a limiting case}, Archiv der Math., \textbf{104}, (2015), no. 1, 77-82.

\bibitem{T}
Talenti, G., {\it Best constant in Sobolev inequality}, Ann. Mat. Pura Appl. (4) \textbf{110} (1976), 353-372. 

%\bibitem{Tru}
%Trudinger, N. S., {\it On imbeddings into Orlicz spaces and some applications}, J. Math. Mech. \textbf{17} (1967), 473-483.

%\bibitem{VZ}
%V\'{a}zquez, J.L., Zuazua, E., {\it The Hardy inequality and asymptotic behaviour of the heat equation with an inverse-square potential}, J. Funct. Anal., \textbf{173} (2000), 103-153.

\bibitem{Z}
Zographopoulos, N. B., {\it Existence of extremal functions for a Hardy-Sobolev inequality}, J. Funct. Anal. \textbf{259} (2010), no. 1, 308-314. 
\end{thebibliography}
\end{document}